\newtheorem{thm}{Theorem}
\newtheorem{lem}[thm]{Lemma}
\newtheorem{defi}[thm]{Definition}
\newtheorem{rem}[thm]{Remark}
\def\I{\mathcal I}
\def\K{\mathcal K}
\def\P{\mathcal P}
\def\R{\mathbb R}
\def\V{\mathbb V}
\def\e{\varepsilon}
\newcommand{\pt}{\partial}
\newcommand{\abs}[1]{\ensuremath{\left|#1\right|}}
\newcommand{\card}[1]{\left\lvert#1\right\rvert}
\newcommand{\norm}[1]{\ensuremath{\left\|#1\right\|}}
\newcommand{\ov}[1]{\ensuremath{\overline{#1}}}
\newcommand{\ovo}[1]{\ensuremath{\overline{\overline{#1}}}}
\newcommand{\doublewidetilde}[1]{{%
  \mathpalette\double@widetilde{#1}%
}}
\newcommand{\double@widetilde}[2]{%
  \sbox\z@{$\m@th#1\widetilde{#2}$}%
  \ht\z@=.9\ht\z@
  \widetilde{\box\z@}%
}
\title[Part 1- Modeling and Well-posedness]{Microscopic tridomain model of electrical activity in the heart with dynamical gap junctions. Part 1- Modeling and Well-posedness}
\subjclass{65N55
, 35A01
, 35A02
, 65M
, 92C30 
}
 \keywords{Tridomain model, Global existence, Uniqueness, Weak solution, Gap junctions, Cardiac electro-physiology.}
\author{Fakhrielddine Bader$^*$ }
\address[Fakhrielddine Bader]{Laboratoire Mathématiques, Image et Applications (MIA), La Rochelle Université, Avenue Michel Crépeau, La Rochelle, France}
\email{fakhrielddine.bader@univ-lr.fr}
\author{Mostafa Bendahmane}
\address[Mostafa Bendahmane]{Institut de Mathématiques de Bordeaux and INRIA-Carmen Bordeaux Sud-Ouest, Université de Bordeaux, 33076 Bordeaux Cedex, France}
\email{mostafa.bendahmane@u-bordeaux.fr}
\author{Mazen Saad}
\address[Mazen Saad]{Laboratoire de Mathématiques Jean Leray (LMJL), École Centrale de Nantes, UMR 6629 CNRS, 1 rue de Noë, Nantes, France}
\email{mazen.saad@ec-nantes.fr}
\author{Raafat Talhouk}
\address[Raafat Talhouk]{Mathematics Laboratory, Doctoral school of Sciences and Technologies, Lebanese University, Hadat Beirut, Lebanon}
\email{rtalhouk@ul.edu.lb}
\thanks{$^*$ Corresponding author: fakhri.bader.fb@gmail.com}
\begin{document}
\maketitle



\begin{abstract}
We present a novel microscopic tridomain model describing the electrical activity in cardiac tissue with dynamical gap junctions.  The microscopic tridomain system consists of three PDEs modeling the tissue electrical conduction in the intra- and extra-cellular domains,  supplemented by a nonlinear ODE system for the dynamics of the ion channels and the gap junctions. We establish the global existence and uniqueness of the weak solutions to our microscopic tridomain model.  The global existence of solution, which constitutes the main result of this paper, is proved by means of an approximate non-degenerate system, the Faedo-Galerkin method, and an appropriate compactness argument.  
\end{abstract}
\tableofcontents

\section{Introduction}

The heart study started since more than two millennia back. This organ, about the size of its owner's clenched fist, contracts rhythmically to circulate blood throughout the body, while other organs like the brain and lungs, were thought to exist to cool the blood. Until this day the heart
keeps the position of one of the most important and the most studied organs in the human body. Especially, cardiovascular disease (CVD) leading to heart attack, is the top cause of death in the worldwide as announced by the "World Health Organization" in 2019. Given the large number of related pathologies, there is an important need for understanding the chemical and electrical phenomena taking place in the cardiac tissue.


In fact, the heart is a muscular organ can be viewed as double pump consisting of four chambers: upper left and right atria, and lower left and right ventricles. These four chambers are surrounded by a cardiac tissue that is organized into muscle fibers. These fibers form a network of cardiac muscle cells called "cardiomyocytes" connected end-to-end by junctions called intercalated discs. Intercalated discs contain gap junctions and desmosomes. Gap junctions transverse of contiguous cells and connect the cytoplasm of one cell to the cytoplasm of the adjacent cell. Cardiac tissue use gap junctions to spread action potential to nearby cells. This allows the heart to generate a single continuous and forceful contraction that pumps the blood throughout the body \cite{keener,katz10}.

 The structure of cardiac tissue (myocarde) studied in this paper is characterized at two different scales (see Figure \ref{cardio}). At microscopic scale, the cardiac tissue consists of two intracellular media which contains the contents of the cardiomyocytes (the cytoplasm) that are connected by gap junctions and the other is called extracellular and consists of the fluid outside the cardiomyocytes cells. Each intracellular medium and the extracellular one are separated by a cellular membrane (the sarcolemma). While at the macroscopic scale, this domain is well considered as a single domain (homogeneous).
 \begin{figure}[h!]
  \centering
  \includegraphics[width=13cm]{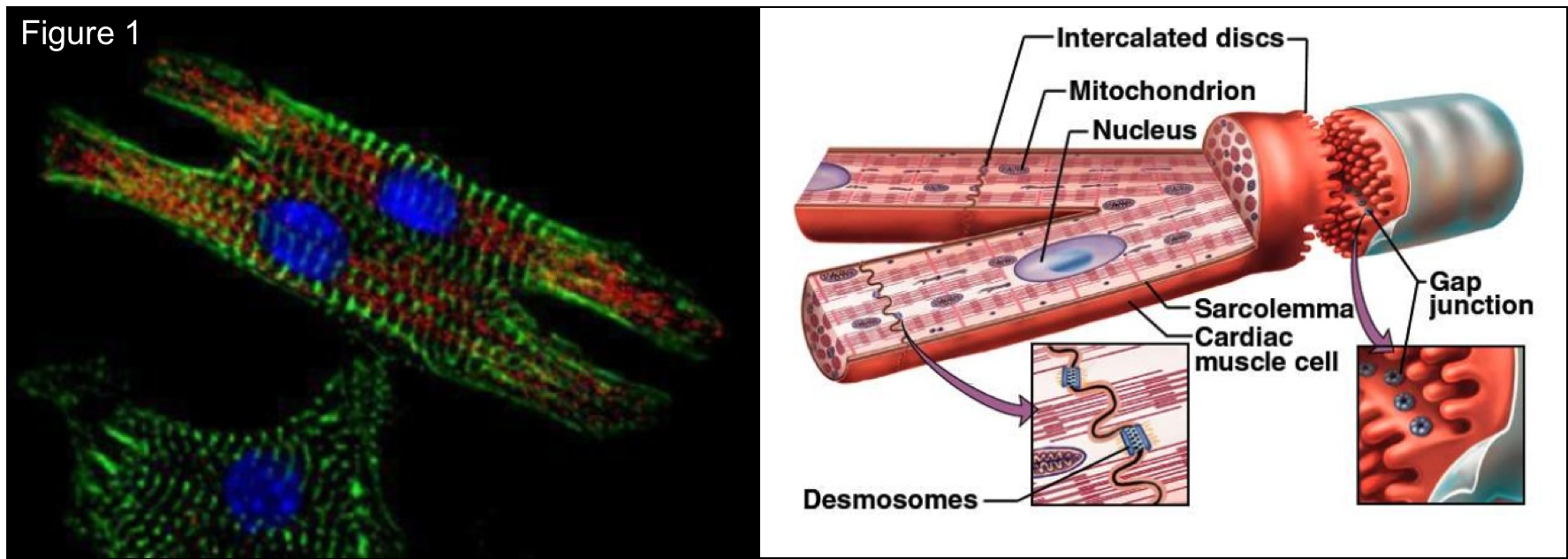}
  \caption{Representation of the cardiomyocyte structure}
  \url{http://www.cardio-research.com/cardiomyocytes}
  \label{cardio}
 \end{figure}
 
 It should be noted that there is a difference between the chemical composition of the cytoplasm and that of the extracellular medium. This difference plays a very important role in cardiac activity. On the one hand, the sarcolemma allows the penetration of inorganic ions (sodium, potassium, calcium,...) and proteins, some of which play a passive role and others play an active role powered by cellular metabolism. In particular, the concentration of anions (negative ions) in cardiomyocytes is higher than in the external environment. This difference of concentrations creates a transmembrane potential, which is the difference in potential at the sarcolemma between each intracellular medium and the extracellular one. On the other hand, gap junctions allows the movement of not only inorganic ions but also organic ions between two adjacent cells \cite{keener}. It provide the pathways for intracellular current flow, enabling coordinated action potential propagation. So, the difference of chemical through the gap junction  creates a gap potential, which is the difference in potential between these two  intracellular media.  Our  model that describes the electrical activity in the cardiac tissue including the gap junctions, is called by "tridomain model". 
From the mathematical viewpoint, the microscopic tridomain model consists of three quasi-static equations, two for the electrical potential in the intracellular medium and one for the extracellular medium, coupled by a nonlinear ODE system  at each membrane (the sarcolemma) and by a linear one at gap junction for the dynamics of the ion channels. These equations depend on scaling parameter $\e$ whose is the ratio between the microscopic scale and the macroscopic one. The microscopic tridomain model was proposed three years ago \cite{tveito17,tveito19} in the case of just two coupled cells compared to our model which is defined on larger collections of cells.


  The goal of the present paper is to investigate existence and uniqueness of solutions of the tridomain equations, coupled with an ionic model, namely the FitzHugh–Nagumo model. We mention some works in the literature on the bidomain model that gives a macroscopic description of the cardiac tissue from  two inter-penetrating domains which are the intracellular and extracellular domains at the microscopic scale.  The first mathematical formulation of this model was constructed by Tung \cite{tung}. This variant leads to two quasi-static whose unknowns are intra- and extracellular electric potentials coupled with non linear ordinary differential equations called ionic models at the membrane. Next, Krassowska and Neu \cite{neukra} have proposed to represent cells by large cylinders connected to each other by narrow channels and then have applied the two-scale asymptotic method to formally obtain the bidomain model from the microscopic problem. In particular, they are considered that these narrow channels precisely model gap-junctions ("low-resistance connections between cells"). There are some references dealing with the well-posedness of this model. First, global existence in time and uniqueness for the solution of the micro- and macroscopic bidomain model  coupled with FitzHugh–Nagumo simplification for the ionic currents, is proven in \cite{colli02,colli05}. It is based on a reformulation of the bidomain problem as a Cauchy system for an evolution variational inequality in a properly chosen Sobolev space. Next, the authors in \cite{marco} used Schauder's fixed point theorem to establish the well-posedness of the macroscopic bidomain problem with a generalized phase-I Luo–Rudy ionic model \cite{luo2}. The authors in \cite{boulakia} have studied the well-posedness of the macroscopic bidomain model coupled to a third PDE that describes the electrical potential of the surrounding tissue within the torso. The existence of a global solution of the latter model is proved using the Faedo-Galerkin method for a wide class of ionic models (including Mitchell-Schaeffer model \cite{mitchell}, FitzHugh-Nagumo \cite{fitz,nagumo}, Aliev-Panfilov \cite{aliev}, and Roger-McCulloch \cite{roger}). Furthermore, in \cite{char09}, existence of a global solution of the macroscopic bidomain model is proved only for the last three ionic models, using a semi-group approach and the Galerkin technique. While uniqueness, however, is achieved only for the FitzHugh–Nagumo ionic model in the two previous works. Moreover, the authors in \cite{bend} proved the existence and uniqueness of solution of the macroscopic bidomain model by using the Faedo-Galerkin method (see for instance \cite{bendunf19} where the authors prove the well-posedness of solution for the microscopic bidomain model using the same technique). In the present work, we prove the existence of solution for the novel microscopic tridomain model by a constructive method based on Faedo-Galerkin approach without the restrictive assumption, usually found in the literature, on the conductivity matrices to have the same basis of eigenvectors or to be diagonal matrices (see for instance \cite{char09}  where the authors prove the existence of a local in time strong solution of the bidomain equations). It is worth to mention that our approach is innovative and cannot be found in the literature in the context of existence of solutions to the microscopic tridomain model.

  \textit{The main contribution of the present paper.} The cardiac tissue structure studied at micro-macro scales. We start by modeling the microscopic tridomain model by taking account the presence of gap junctions as connection between adjacent cardiac cells. Next, we formulate our tridomain model in dimensionless form with the hope to get more insight in the meaning of the microscopic and macroscopic scales. Finally, we end by proving the well-posedness of the microscopic tridomain problem by using Faedo-Galerkin method, a priori estimates and $L^2$-compactness argument on the membrane surface.  
  
  \textit{The outline of the paper is as follows.} In Section \ref{geotrid}, we describe the geometry of cardiac tissue in the presence of gap junction and some notations and explanations on the boundary conditions are introduced. Furthermore, we introduce in detail our microscopic tridomain model in the cardiac tissue structure. In Section \ref{main_results_trid}, our main result is stated: existence and uniqueness of a weak solutions. In Section \ref{Exist_sol_gap}, we shall completely define and prove existence, uniqueness of a weak solutions. It is based on  Faedo-Galerkin technique, a priori estimates and compactness results. The results are obtained under minimal regularity assumptions on the data.

\section{Tridomain modeling of the cardiac tissue}\label{geotrid}
The aim of this section is to describe the geometry of cardiac tissue and to present the microscopic tridomain model of the heart.
\subsection{Geometry of heart tissue}
The cardiac tissue $\Omega \subset \R^d \ (d\geq 3)$ is considered as a heterogeneous periodic domain with a Lipschitz boundary $\pt \Omega$. The structure of the tissue is periodic at microscopic scale related to small parameter $\e$, see Figure \ref{two_scale_gap}. 

 \begin{figure}[h!]
  \centering
  \includegraphics[width=12cm]{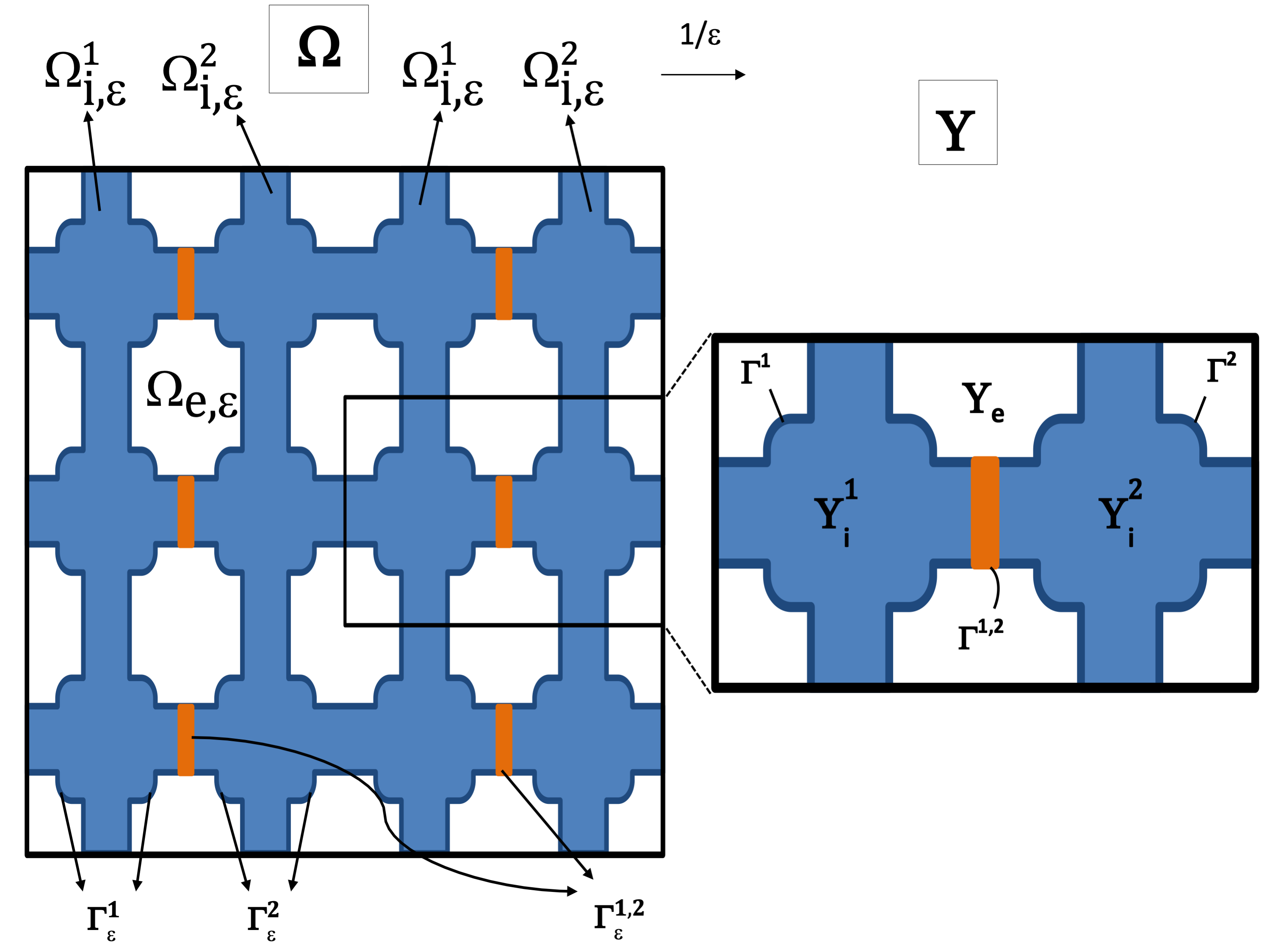}
  \caption{(Left) Periodic heterogeneous domain $\Omega.$ (Right) Unit cell $Y$ at $\e$-structural level.}
  \label{two_scale_gap}
 \end{figure} 

 Following the standard approach of the homogenization theory, this structure is featured by $\ell^\text{mic}$ characterizing the microscopic length of a cell. Under the one-level scaling, the characteristic length $\ell^\text{mic}$ is related to a given macroscopic  length $L$ (of the cardiac fibers), such that the scaling parameter $\e$ introduced by:
 $$\e=\frac{\ell^\text{mic}}{L}.$$

 Physiologically, the cardiac cells are connected by many gap junctions. Therefore, geometrically, the domain $\Omega$ consists of two intracellular media $\Omega_{i,\e}^{k}$ for $k=1,2,$ that are connected by gap junctions $\Gamma^{1,2}_{\e}=\pt \Omega_{i,\e}^{1} \cap \pt \Omega_{i,\e}^{2}$ and extracellular medium $\Omega_{e,\e}$ (for more details see \cite{tveito17,tveito19}). 
 Each intracellular medium $\Omega_{i,\e}^{k}$ and the extracellular one  $\Omega_{e,\e}$ are separated by the surface membrane $\Gamma_{\e}^{k}$ (the sarcolemma) which is expressed by: $$\Gamma_{\e}^{k}=\pt \Omega_{i,\e}^{k} \cap \pt \Omega_{e,\e}, \text{ with } k=1,2,$$ while the remaining (exterior) boundary is denoted by $\pt_{\text{ext}} \Omega$. We can observe that the intracellular domains as a perforated domain obtained from $\Omega$ by removing the holes which correspond to the extracellular domain $\Omega_{e,\e}.$ 
 
  We can divide $\Omega$ into $N_{\e}$ small elementary cells $Y_{\e}=\overset{d}{\underset{n=1}{\prod }}]0,\e \, \ell^\text{mic}_n[,$  with $\ell^\text{mic}_1,\dots,\ell^\text{mic}_d$ are positive numbers. These small cells are all equal, thanks to a translation and scaling by $\e,$ to the same unit cell of periodicity called the reference cell  $Y=\overset{d}{\underset{n=1}{\prod }}]0,\ell^\text{mic}_n[.$ So, the $\e$-dilation of the reference cell $Y$ is defined as the following shifted set $Y_{\e,h}:$
\begin{equation}
 Y_{\e,h}:=T^h_\e+\e Y=\lbrace \e \xi : \xi \in h_\ell+Y \rbrace,
\label{trans_Y}
 \end{equation}
 where $T_\e^h$ represents the translation of $\e h$ with $h=( h_1,\dots, h_d ) \in\mathbb{Z}^d$ and  $h_\ell:=( h_1\ell^\text{mic}_1,\dots,  h_d \ell^\text{mic}_d ).$\\
 Therefore, for each macroscopic variable $x$ that belongs to $\Omega,$ we define the corresponding microscopic variable $y\approx\dfrac{x}{\e}$ that belongs to $Y$ with a  translation. Indeed, we have:
 \begin{equation*}
 x \in \Omega \Rightarrow \exists h \in\mathbb{Z}^d  \ \text{ such that }  \ x \in Y^h_\e \Rightarrow x=\e (h_\ell+y) \Rightarrow y=\dfrac{x}{\e}-h_\ell \in Y.
 \end{equation*}

 Since, we will study the behavior of the functions $u(x,y)$ which are $\textbf{y}$-periodic, so by periodicity we have $u\left( x,\dfrac{x}{\e}-h_\ell\right)  =u\left( x,\dfrac{x}{\e}\right) .$ By notation, we say that $y=\dfrac{x}{\e}$ belongs to $Y.$

 We are assuming that the cells are periodically organized as a regular network of interconnected cylinders at the microscale. The microscopic unit cell $Y$ is also divided into three disjoint connected parts: two intracellular parts $Y_i^{k}$ for $k=1,2,$ that are connected by an intercalated disc (gap junction) $\Gamma^{1,2}$ and extracellular part $Y_e.$ Each intracellular parts $Y_i^k$ and the extracellular one are separated by a common boundary $\Gamma^{k}$ for $k=1,2.$ So, we have:
 \begin{equation*}
 Y:=\overline{Y}_i^{1} \cup \overline{Y}_i^{2} \cup \overline{Y}_e, \quad \Gamma^{k}:= \pt Y_i^{k} \cap \pt Y_e,\quad\Gamma^{1,2}:= \pt Y_i^{1} \cap \pt Y_i^{2},
 \end{equation*}
  with $k=1,2.$ 
In a similar way, we can write the corresponding common periodic boundary as follows:
  \begin{equation}
\Gamma_{\e,h}=T^h_\e+\e \Gamma=\lbrace \e \xi : \xi \in h_\ell+\Gamma \rbrace,
\label{trans_gamma}
 \end{equation}
with $T^h_\e$ denote the same previous translation, $\Gamma_{\e,h}:=\Gamma^{k}_{\e,h},\Gamma^{1,2}_{\e,h}$ and $\Gamma:=\Gamma^{k},\Gamma^{1,2}$ for $k=1,2$.

 In summary, the intracellular and extracellular media can be described as follows:  
 \begin{align*}
&\Omega_{i,\e}^{k}=\Omega \cap \underset{h\in \mathbb{Z}^d}{\bigcup} Y^k_{i,\e,h}, \quad \Omega_{e,\e}=\Omega \cap \underset{h\in \mathbb{Z}^d}{\bigcup} Y_{e,\e,h},
\\& \quad \Gamma_{\e}^{k}=\Omega \cap \underset{h\in \mathbb{Z}^d}{\bigcup} \Gamma^{k}_{\e,h} \text{ and } \Gamma_{\e}^{1,2}=\Omega \cap \underset{h\in \mathbb{Z}^d}{\bigcup} \Gamma^{1,2}_{\e,h},
\end{align*}
where $Y^k_{i,\e,h},$ $Y_{e,\e,h}$ and $\Gamma_{\e}^{k},\Gamma_{\e}^{1,2}$ are respectively defined as \eqref{trans_Y}-\eqref{trans_gamma} for $k=1,2$. 

 Both sets $\Omega_{i,\e}^{k},$ $k=1,2$ and $\Omega_{e,\e}$ are assumed to be connected Lipschitz domains so that a Poincaré-Wirtinger inequality is satisfied in both domains. The boundaries $\Gamma^{k},$ $k=1,2$ and $\Gamma^{1,2}$ are smooth manifolds such that $\Gamma^{k}_{\e},$ $k=1,2$ and $\Gamma^{1,2}_{\e}$ are smooth and connected. 
\subsection{Microscopic tridomain model}
A vast literature exists on the bidomain modeling of the heart, we refer to \cite{char,colli02,colli05,colli12} for more details. Here, we define a novel microscopic tridomain model described in detail in \cite{tveito17,tveito19} and used in our investigations, as well the models chosen for the membrane and gap junctions dynamics. In the sequel, the space-time set $(0,T)\times O$ is denoted by $O_T$ in order to simplify the notation. There are a few references dealing with the tridomain model for other cells types, e.g. cardiomyocytes and fibroblasts \cite{sachse} and for simulating bioelectric gastric pacing \cite{sathar15,sathar18}. \\
\paragraph{\textbf{Basic equations.}} The basic tridomain equations modeling the propagation of cardiac action potentials at cellular level in the presence of gap junctions which can be formulated as follows. First, we know that the structure of the cardiac tissue can be viewed as composed by two intracellular spaces  $\Omega_{i}^{k}$ for $k=1,2,$ that are connected by gap junction $\Gamma^{1,2}$ and the extracellular space $\Omega_e.$ The membrane $\Gamma^{k}$ is defined by the intersection between each intracellular domain $\Omega_{i}^{k}$ and the extracellular one with $k=1,2$. 
  
  Thus, the membrane $\Gamma^{k}$ is pierced by proteins whose role is to ensure ionic transport between the two media (intracellular and extracellular) through this membrane. So, this transport creates an electric current.\\ Using Ohm's law, the intracellular electrical potentials $u_{i}^{k}$ and extracellular one $u_{e}$ are respectively related to the current volume densities $J_{i}^{k}$ and $J_{e}$ for $k=1,2$ :
\begin{align*}
&J_{i}^{k}=\mathrm{M}_{i}\nabla u_{i}^{k}, \ \text{in} \ \Omega_{i,T}^{k}:=(0,T)\times\Omega_{i}^{k},
\\& J_{e}=\mathrm{M}_{e}\nabla u_{e}, \ \text{in} \ \Omega_{e,T}:=(0,T)\times\Omega_{e},
\end{align*}
where $\mathrm{M}_{j}$ represents the corresponding conductivity of the tissue for $j=i,e$ (which are assumed to be
isotropic at the microscale and are given in mS/cm).\\
In addition, the \textit{transmembrane} potential $v^{k}$ is known as the  potential at the membrane $\Gamma^{k}$   which is defined as follows:
\begin{equation*}
v^{k}=(u_{i}^{k}-u_{e})\vert_{\Gamma^{k}} : (0,T)\times \Gamma^{k} \mapsto \R  \text{ for } k=1,2.
\end{equation*}
 
  Moreover, we assume the intracellular and extracellular spaces are source-free and thus the intracellular and extracellular potentials are solutions to the elliptic equations:
\begin{equation}
\begin{aligned}
&-\text{div}J_{i}^{k}=0, \ \text{in} \ \Omega_{i,T}^{k},
\\& -\text{div}J_{e}=0, \ \text{in} \ \Omega_{e,T},
\end{aligned}
\label{pbini_gap}
\end{equation}
with $k=1,2.$
 
 According to the  current conservation law, the surface current density $\I_{m}^{k}$ is now introduced:
\begin{equation}
\I_{m}^{k}=-J_{i}^{k}\cdot n_i^{k}=J_{e}\cdot n_{e}, \ \text{on} \ \Gamma^{k}_{T}:=(0,T)\times \Gamma^{k},
\label{cond_bord_sarco} 
\end{equation}
with $n_i^{k}$ denotes is the (outward) normal pointing out from $\Omega_{i,\e}^{k}$ for $k=1,2$ and $n_{e}$ is the normal pointing out from $\Omega_{e,\e}.$

 The membrane has both a capacitive property schematized by a capacitor and a resistive property schematized by a resistor. On the one hand, the capacitive property depends on the formation of the membrane which can be represented by a capacitor of capacitance  $C_m$ (the capacity per unit area of the membrane is given in $\mu$F$/$cm$^2$). We recall that the quantity of the charge of a capacitor is $q^{k}=C_m v^{k}.$ Then, the capacitive current $\I_c^{k}$ for $k=1,2$ is the amount of charge that flows per unit of time:
\begin{equation*}
\I_{c}^{k}=\pt_t q^{k}=C_m\pt_t v^{k}.
\end{equation*}
On the other hand, the resistive property depends on the ionic transport between the intracellular and extracellular media. Then, the resistive current $\I_r$ is defined by the ionic current $\I_{ion}^{k}$ measured from the intracellular to the extracellular medium which depends on the transmembrane potential $v^{k}$ and the gating variable $w^{k} : \Gamma^{k} \mapsto \R$ with $k=1,2$. Moreover, the total transmembrane current $\I_{m}^{k}$ (see \cite{colli12}) is given by:
\begin{equation*}
\I_{m}^{k}=\I_{c}^{k}+\I_{r}^{k}-\I_{app}^{k} \text{ on } \Gamma^{k}_{T},
\end{equation*} 
with $\I_{app}^{k}$ is the applied current of the membrane surface for $k=1,2$ (given in $\mu$A/cm$^{2}$).\\
Consequently, due to the dynamics of the ionic fluxes through the cell membrane, its electrical potential $v^{k}$ satisfies the following dynamic condition on $\Gamma^{k}$ involving the gating variable $w^{k}$:
\begin{equation}
\begin{aligned}
&\I_{m}^{k}=  C_m\pt_t v^{k}+\I_{ion}\left(v^{k},w^{k}\right)-\I_{app}^{k}  &\text{ on } \Gamma^{k}_{T},
\\ &\pt_t w^{k}-H\left(v^{k},w^{k}\right)=0 &\text{ on } \Gamma^{k}_{T}.
\label{cond_dyn_sarco}
\end{aligned}
\end{equation}
Furthermore, the functions $H$ and $\I_{ion}$ correspond to the ionic model of membrane dynamics. All surface current densities $\I_{m}^{k}$ for $k=1,2$ and $\I_{ion}$ are given in $\mu$A/cm$^{2}$. Moreover, time is given in ms and length is given in cm.
 
  In addition, we represent the gap junction between intra-neighboring cells by a passive model. This model includes several state variables in addition to the gap junction potential $s$  which is defined as follows:  
 \begin{equation*}
 s=(u_{i}^{1}-u_{i}^{2})\vert_{\Gamma^{1,2}} : (0,T)\times \Gamma^{1,2} \mapsto \R.
 \end{equation*}
The ionic current $\I_{1,2}$ through the gap junction $\Gamma^{1,2}$ defined by:
\begin{equation}
\I_{1,2}=-J_{i}^{1}\cdot n_i^{1}=J_{i}^{2}\cdot n_i^{2}, \ \text{on} \ \Gamma^{1,2}_{T}:=(0,T)\times \Gamma^{1,2}.
\label{cond_bord_gap} 
\end{equation}
Similarly, the ionic current $\I_{1,2}$ at a gap junction $\Gamma^{1,2}$ represents the sum of the capacitive and resistive currents. Consequently, regarding the dynamic structure of the gap junction, its electrical potential $s$ satisfies the following dynamic condition on $\Gamma^{1,2}:$
\begin{equation}
\begin{aligned}
&\I_{1,2}=  C_{1,2}\pt_t s+\I_{gap}\left(s\right)  &\text{ on } \Gamma^{1,2}_{T},
\label{cond_dyn_gap}
\end{aligned}
\end{equation}
where $C_{1,2}$ represents the capacity per unit area of the intercalated disc and $\I_{gap}$ represents the corresponding resistive current. In general, the value of $C_{1,2}$ is set to $C_m/2$ because the intercalated disc is assumed to be a membrane of thickness twice as large as the cell membrane, and the specific capacitance of a capacitor $C_m$ formed by two parallel plates separated by an insulator may be assumed to be inversely proportional to the thickness of the insulator \cite{tveito19}. \\

 \paragraph{\textbf{Non-dimensional analysis.}}  We use the microscopic model given in the previous part without the parameter scaling $\e.$ In the non-dimensionalization procedure, $\e$ will appear also in each boundary conditions due to the scaling of the involved quantities (see \cite{henri,colli12,BaderDev} for the bidomain case). 

 As a natural assumption for homogenization, we want to formulate the tridomain equations \eqref{pbini_gap}-\eqref{cond_dyn_gap} in dimensionless form with the hope to get more insight in the meaning of the parameter $\e.$  We define the dimensionless parameter $\e$ as the ratio between the microscopic length $\ell^{mic}$ and the macroscopic length $L$, i.e.
 $$\e=\dfrac{\ell^{mic}}{L}.$$ 
 Using all fundamental material constants, several additional time and length constants can be formulated. For convenience, the macroscopic length is defined as $L=\sqrt{ R_m \lambda\ell^{mic}}$, the membrane time constant $\tau_m$ is given by:
$$\tau_m=R_mC_m,$$
where $R_m$ is the resistance of the passive membrane and $\lambda$ is a normalization of the conductivity matrix $\mathrm{M}_j$ for $j=i,e$.\\
After that, we can convert the microscopic tridomain problem into a non-dimensional form by scaling space and time with the constants, such as, $$x=L\widehat{x} \text{ and } t=\tau_m\widehat{t}.$$ 
We take $\widehat{x}$ to be the variable at the macroscale (slow variable),
$$y:=\dfrac{\widehat{x}}{\e}$$
to be the microscopic space variable (fast variable) in the unit cell $Y$. We also scale the electric potentials for $k=1,2$:
\begin{align*}
& u_{i}^{k}=\delta v \ \widehat{u}_{i}^{k}, \ u_{e}=\delta v \ \widehat{u}_{e},  
\\ & \text{ and } w^{k}=\delta w \ \widehat{w}^{k}
\end{align*}
where $\delta v,$ $\delta w$ are respectively the convenient units to measure the electric potentials and the gating variable. Furthermore, we normalize the conductivities matrices as follows
\begin{equation*}
\widehat{\mathrm{M}}_{j}=\dfrac{1}{\lambda} \mathrm{M}_{j}, \text{ for } j=i,e,
\end{equation*}
and we nondimensionalize the ionic functions $\I_{ion},$ $H$, the applied current $\I_{app}^{k},$ $k=1,2,$ and the gap current $\I_{gap}$ by using the following scales:
\begin{align*}
& \widehat{\I}_{ion}\left(\widehat{v}^{k},\widehat{w}^{k} \right)=\dfrac{R_m}{\delta v} \I_{ion}\left(v^{k},w^{k} \right), \quad \widehat{H}\left(\widehat{v}^{k},\widehat{w}^{k} \right)=\dfrac{\tau_m}{\delta w} H\left(v^{k},w^{k} \right), \\ & \widehat{\I}_{app}^{k}=\dfrac{R_m}{\delta v} \I_{app}^{k}, \text{ and } \widehat{\I}_{gap}\left(\widehat{s}\right)=\dfrac{R_{m}C_{m}}{\delta v C_{1,2}} \I_{gap}\left(s \right), 
\end{align*}
where $\widehat{v}^{k}=\widehat{u}_{i}^{k}-\widehat{u}_{e}$ for $k=1,2$ and $\widehat{s}=\widehat{u}_{i}^{1}-\widehat{u}_{i}^{2}.$
\begin{rem}
Recalling that the dimensionless parameter $\e,$ given by $\e:=\sqrt{\dfrac{\ell^{mic}}{R_m \lambda}},$ is the ratio between the microscopic cell length $\ell^{mic}$ and the macroscopic length $L$, i.e. $\e=\ell^{mic}/L$  and solving for $\e,$ we obtain
\begin{equation*}
\e=\dfrac{L}{R_m \lambda}.
\end{equation*}
As pointed in \cite{henri,colli12}, we can consider for a typical cardiac cell that $\ell^{mic}=100\mu$m, $\lambda= 5$mS/cm and $R_m= 10\,000$ $\Omega$cm$^{2},$ leading to $\e=7.1\times 10^{-3}.$ 
\end{rem}
\begin{rem} Using all scaling parameters, we obtain the dimensionless of gap boundary condition \eqref{cond_dyn_gap} as follows  
\begin{equation*}
 \e\dfrac{C_{1,2}}{C_m}\left(\pt_{\widehat{t}} \widehat{s}+\widehat{\I}_{gap}\left(\widehat{s}\right) \right) =\widehat{\I}_{1,2}  \ \text{ on } \ \Gamma_{\e,T}^{1,2}.
\end{equation*}
As previously stated, we can consider $C_{1,2}=C_{m}/2$ so we rewrite the above equation as follows
\begin{equation*}
\frac{\e}{2}\left(\pt_{\widehat{t}} \widehat{s}+\widehat{\I}_{gap}\left(\widehat{s}\right) \right) =\widehat{\I}_{1,2}  \ \text{ on } \ \Gamma_{\e,T}^{1,2}.
\end{equation*} 
\end{rem}

Cardiac tissue exhibits a number of significant inhomogeneities in particular those related to cell-to-cell communications. Rescaling the equations \eqref{pbini_gap}-\eqref{cond_dyn_gap} in the intracellular and extracellular media and  omitting the superscript $ \  \widehat{\cdot} \ $ of the dimensionless variables, we obtain the following non-dimensional form: 
\begin{subequations}
\begin{align}
-\nabla\cdot\left( \mathrm{M}_{i}^{\e}\nabla u_{i,\e}^{k}\right)  &=0 & \text{ in } \Omega_{i,\e,T}^{k}:=(0,T)\times\Omega_{i,\e}^{k}, 
\label{trid_intra}
\\ -\nabla\cdot\left( \mathrm{M}_{e}^{\e}\nabla u_{e,\e}\right)  &=0 & \text{ in } \Omega_{e,\e,T}:=(0,T)\times\Omega_{e,\e}, 
\label{trid_extra}
\\u_{i,\e}^{k}-u_{e,\e}&=v_{\e}^{k} &\ \text{on} \ \Gamma_{\e,T}^{k}:=(0,T)\times\Gamma_{\e}^{k},
\label{trid_v}
\\-\mathrm{M}_{i}^{\e}\nabla u_{i,\e}^{k} \cdot n_i^{k}=\mathrm{M}_{e}^{\e}\nabla u_{e,\e} \cdot n_{e} & =\I_{m}^{k} &\text{ on } \Gamma_{\e,T}^{k},
\label{trid_cont_sarco}
\\ \e\left( \pt_{t} v_{\e}^{k}+\I_{ion}\left(v_{\e}^{k},w_{\e}^{k}\right) -\I_{app,\e}^{k}\right) &=\I_{m}^{k} &\ \text{on} \ \Gamma_{\e,T}^{k},
\label{trid_bord_sarco}
\\ \pt_{t} w_{\e}^{k}-H\left(v_{\e}^{k},w_{\e}^{k}\right) &=0 & \text{ on }  \Gamma_{\e,T}^{k},
\label{trid_dyn_sarco}
\\u_{i,\e}^{1}-u_{i,\e}^{2}&=s_{\e} &\ \text{on} \ \Gamma_{\e,T}^{1,2}:=(0,T)\times\Gamma_{\e}^{1,2},
\label{trid_s}
\\-\mathrm{M}_{i}^{\e}\nabla u_{i,\e}^{1} \cdot n_i^{1}=\mathrm{M}_{i}^{\e}\nabla u_{i,\e}^{2} \cdot n_i^{2}& =\I_{1,2} &\ \text{on} \ \Gamma_{\e,T}^{1,2},
\label{trid_cont_gap}
\\ \frac{\e}{2}\left( \pt_{t} s_{\e}+\I_{gap}\left(s_{\e}\right) \right) &=\I_{1,2} &\ \text{on} \ \Gamma_{\e,T}^{1,2},
\label{trid_bord_gap}
\end{align}
\label{pbscale_gap}
\end{subequations}
with $k=1,2$ and each equation corresponds to the following sense:
\eqref{trid_intra} Intra quasi-stationary conduction, \eqref{trid_extra} Extra quasi-stationary conduction, \eqref{trid_v} Transmembrane potential, \eqref{trid_cont_sarco} Continuity equation at cell membrane, \eqref{trid_bord_sarco} Reaction condition at the corresponding cell membrane, \eqref{trid_dyn_sarco} Dynamic coupling, \eqref{trid_s} Gap junction potential, \eqref{trid_cont_gap} Continuity equation at gap junction, \eqref{trid_bord_sarco} Reaction condition at gap junction. 

 Observe that the tridomain equations \eqref{trid_intra}-\eqref{trid_extra} are invariant with respect to the above scaling. We define now the rescaled electrical potential as follows:
$$u_{i,\e}^{k}(t,x):=u_{i}^{k}\left( t, x,\frac{x}{\e}\right), \ \ u_{e,\e}(t,x):= u_{e}\left( t, x,\frac{x}{\e}\right), \text{ for } k=1,2.$$
Analogously, we obtain the rescaled transmembrane potential $v_{\e}^{k},$  the rescaled gap junction potential $s_{\e}$ and the corresponding gating variable $w_{\e}^{k}$ for $k=1,2.$ Furthermore, the conductivity tensors are considered dependent both on the slow and fast variables, i.e. for $j=i,e,$ we have 
\begin{equation}
\mathrm{M}_{j}^{\e}(x):=\mathrm{M}_{j}\left( x,\frac{x}{\e}\right),
\label{M_ie_gap}
\end{equation}
satisfying the elliptic and periodicity conditions: there exist constants $\alpha, \beta \in \R,$ such that $0<\alpha<\beta$ and for all $\lambda\in \R^d:$
\begin{subequations}
\begin{align}
&\mathrm{M}_j\lambda\cdot\lambda \geq \alpha\abs{\lambda}^2, 
\\& \abs{\mathrm{M}_j\lambda}\leq \beta \abs{\lambda},
\\& \mathrm{M}_j \ \mathbf{y}\text{-periodic},\text{ for }  j=i,e.
\end{align}
\label{A_Mie_gap}
\end{subequations}
\begin{rem} Finally, we assume that each $\mathrm{M}_j$ is symmetric: $\mathrm{M}_j^{T}=\mathrm{M}_j.$
\end{rem}
We complete system \eqref{pbscale_gap} with no-flux boundary conditions on $\pt_{\text{ext}} \Omega$: 
\begin{equation*}
\left( \mathrm{M}_{i}^{\e}\nabla u_{i,\e}^{k}\right) \cdot \mathbf{n}=\left( \mathrm{M}_{e}^{\e}\nabla u_{e,\e}\right) \cdot\mathbf{n}=0 \ \text{ on } \  (0,T)\times \pt_{\text{ext}} \Omega,
\end{equation*}
where $k=1,2$ and $\mathbf{n}$ is the outward unit normal to the exterior boundary of $\Omega.$ We impose initial conditions on transmembrane potential $v_{\e}^{k},$ gap junction potential $s_{\e}$ and gating variable $w_{\e}^{k}$ as follows: 
\begin{equation}
\begin{aligned}
& v_{\e}^{k}(0,x)=v_{0,\e}^{k}(x), \ w_{\e}^{k}(0,x)=w_{0,\e}^{k}(x) & \text{ a.e. on } \Gamma_{\e,T}^{k}, 
\\ & \text{and } s_{\e}(0,x)=s_{0,\e}(x) & \text{ a.e. on } \Gamma_{\e,T}^{1,2},
\end{aligned}
\label{cond_ini_vws_gap}
\end{equation}
with $k=1,2.$ \\
We mention some assumptions on the ionic functions, the source term and the initial data.\\
\textbf{Assumptions on the ionic functions.} The ionic current $\I_{ion}(v^{k},w^{k})$ at each cell membrane $\Gamma^{k}$ can be decomposed into $\mathrm{I}_{a,ion}\left( v^{k} \right)$ and $\mathrm{I}_{b,ion}^{k}\left( w^{k}\right) ,$ where $\I_{ion}\left(v^{k},w^{k}\right)=\mathrm{I}_{a,ion}\left(v^{k}\right) +\mathrm{I}_{b,ion}\left(w^{k}\right) $ with $k=1,2.$ Furthermore, the nonlinear function $\mathrm{I}_{a,ion}: \R \rightarrow \R$ is considered as a $C^1$ function and the functions $\mathrm{I}_{b,ion}: \R \rightarrow \R$  and $H : \R^2 \rightarrow \R$ are considered as linear functions. Also, we assume that there exists $r\in (2,+\infty)$ and constants $\alpha_1,\alpha_2,\alpha_3, \alpha_4, \alpha_5, C>0$ and $\beta_1>0, \beta_2\geq0$ such that:

\begin{subequations}
\begin{align}
&\dfrac{1}{\alpha_1} \abs{v}^{r-1}\leq \abs{\mathrm{I}_{a,ion}\left(v\right)}\leq \alpha_1\left(\abs{v}^{r-1}+1\right), \,\abs{\mathrm{I}_{b,ion}\left(w\right)}\leq \alpha_2(\abs{w}+1), 
\label{A_I_ab} 
\\& \abs{H(v,w)}\leq \alpha_3(\abs{v}+\abs{w}+1),\text{ and }
    \mathrm{I}_{b,ion}\left(w\right)v-\alpha_4 H(v,w)w\geq \alpha_5 \abs{w}^2,
\label{A_H_Ib_a}
\\& \tilde{\mathrm{I}}_{a,ion} : v\mapsto \mathrm{I}_{a,ion}(v)+\beta_1 v+\beta_2 \text{ is strictly increasing with } \lim \limits_{v\rightarrow 0} \tilde{\mathrm{I}}_{a,ion}(v)/v=0,
\label{A_tildeI_a_1} 
\\& \forall v,v' \in \R,\,\,\left(\tilde{\mathrm{I}}_{a,ion}(v)-\tilde{\mathrm{I}}_{a,ion}(v') \right)(v-v')\geq \dfrac{1}{C} \left(1+\abs{v}+\abs{v'} \right)^{r-2} \abs{v-v'}^{2},
\label{A_tildeI_a_2} 
\end{align}
\label{A_H_I_gap}
\end{subequations}
with $(v,w):=\left(v^{k},w^{k}\right)$ for $k=1,2.$
\begin{rem}
In the mathematical analysis of bidomain equations, several paths have been followed in the literature according to the definition of the ionic currents. We summarize below the phenomenological models:
\begin{enumerate}
\item[]Other non-physiological models have been introduced as approximations of ion current models. They can be used in large problems because they are typically small and fast to solve, although they are less flexible in their response to variations in cellular properties such as concentrations or cell size. We take in this paper the FitzHugh-Nagumo model \cite{fitz,nagumo} that satisfies assumptions \eqref{A_H_I_gap} which reads as
\begin{subequations}
  \begin{align}
  H\left(v,w\right)&= a_1 v-b_1w, \\  \I_{ion}\left( v,w\right)&=\left[\rho v (1-v)\left( v-\theta\right) \right] -\rho w:= \mathrm{I}_{a,ion}\left(v\right)+\mathrm{I}_{b,ion}\left(w\right)
  \end{align}
  \label{ionic_model_gap}
  \end{subequations}
  where $a_1, b_1, \rho, \theta$ are given parameters with $a_1,b_1> 0, \ \rho<0$ and $\theta \in (0,1).$  According to this model, the functions $\I_{ion}$ and $H$ are continuous and the non-linearity $\mathrm{I}_{a,ion}$ is of cubic growth at infinity then the most appropriate value is $r=4.$ Using Young's inequality, we have
  \begin{equation}
  \abs{v}^2\leq \dfrac{2\abs{v}^3}{3}+\dfrac{1}{3}, \quad \abs{v}\leq \dfrac{\abs{v}^3}{3}+\dfrac{2}{3}, \quad \abs{v}\leq \dfrac{\abs{v}^2}{2}+\dfrac{1}{2}
  \end{equation}
and then assumption \eqref{A_I_ab} holds for $r=4:$
\begin{align*}
&\abs{\mathrm{I}_{a,ion}\left(v\right)}=\abs{\rho v (1-v)\left( v-\theta\right)}\leq \left( \dfrac{2}{3}\theta+\dfrac{1}{3}(1+\theta)\right)\abs{\rho} +\left(\dfrac{1}{3}\theta+\dfrac{2}{3}(1+\theta)+1 \right)\abs{\rho}\abs{v}^3,
\\ & \abs{\mathrm{I}_{b,ion}\left(w\right)}=\abs{\rho} \abs{w},
\\ & \abs{H\left(v,w\right)}=\abs{a_1 v-b_1 w} \leq a_1 \abs{v}+b_1 \abs{w}.
\end{align*}
Now, we compute the function $E(u,v):=\mathrm{I}_{b,ion}\left(w\right)v-\alpha_4 H(v,w)w$ defined in $\R^2.$ So, the second assumption \eqref{A_H_Ib_a} holds with $\alpha_4=-\dfrac{\rho}{a_1}:$
\begin{equation}
E(u,v)=\dfrac{\rho}{a_1} w^2.
\end{equation}
Moreover, the conditions \eqref{A_tildeI_a_1}-\eqref{A_tildeI_a_2} are automatically satisfied by any cubic polynomial $\I_{ion}$ with positive leading coefficient. We end this remark by mentioning other reduced ionic models: the Roger-McCulloch model \cite{roger} and the Aliev-Panfilov model \cite{aliev}, may consider more general that the previous model but still rise some mathematical difficulties. Furthermore, the Mitchell-Schaeffer model \cite{mitchell} has been studied in \cite{boulakia,kunisch} and its regularized version have a very specific structure.  In particular, no proof of uniqueness of solutions for these models exists in the literature. 
\end{enumerate}
\end{rem}
Now, we represent the gap junction $\Gamma_{\e}^{1,2}$ between intra-neighboring cells by a passive membrane:
\begin{equation}
\I_{gap}(s)=G_{gap}s,
\label{gap_model}
\end{equation}
where $G_{gap}=\frac{1}{R_{gap}}$ is the conductance of the gap junctions. A discussion of the modeling of the gap junctions is given in \cite{hogues}.\\
\textbf{Assumptions on the source term.} There exists a constant $C$ independent of $\e$ such that the source term $\I_{app,\e}^{k}$ satisfies the following estimation for $k=1,2$:
\begin{equation}
\norm{\e^{1/2}\I_{app,\e}^{k}}_{L^{2}(\Gamma_{\e,T}^{k})}\leq C.
\label{A_iapp_gap}
\end{equation}  
\textbf{Assumptions on the initial data.} The initial condition $v_{0,\e}^{k},$ $s_{0,\e}$ and $w_{0,\e}^{k}$ satisfy the following estimation:
\begin{equation}
\sum\limits_{k=1,2}\norm{\e^{1/r}v_{0,\e}^{k}}_{L^{r}(\Gamma_{\e}^{k})}+\norm{\e^{1/2}s_{0,\e}}_{L^{2}(\Gamma_{\e}^{1,2})}+\sum\limits_{k=1,2}\norm{\e^{1/2}w_{0,\e}^{k}}_{L^{2}(\Gamma_{\e}^{k})}\leq C,
\label{A_vw0_gap}
\end{equation}
for some constant $C$ independent of $\e.$ Moreover, $v_{0,\e}^{k},$ $s_{0,\e}$ and $w_{0,\e}^{k}$ are assumed to be traces of uniformly bounded sequences in $C^{1}(\overline{\Omega})$ with $k=1,2.$

  Finally,  one can observe that Equations in \eqref{pbscale_gap} are invariant under the change of $u_{i,\e}^{k},$ $k=1,2$ and $u_{e,\e}$ into $u_{i,\e}^{k}+c,$ $u_{e,\e}+c,$ for any $c\in\R.$ Therefore, we may impose the following normalization condition:
 \begin{equation}
 \int_{\Omega_{e,\e}} u_{e,\e} \ dx=0, \text{ for a.e. } t\in(0,T).
 \label{normalization}
 \end{equation}
 
 \section{Main results}\label{main_results_trid}
 In this part, we highlight our main results obtained in our paper. First, we define the weak solutions of the microscopic tridomain model. Next, we find a priori estimates and we supply our existence and uniqueness results by using Faedo-Galerkin method, compactness argument and monotonicity.
 
  We start by stating the weak formulation of the microscopic tridomain model as given in the following definition.
\begin{defi}[Weak formulation of microscopic system] A weak solution to problem \eqref{pbscale_gap}-\eqref{cond_ini_vws_gap} is a collection $(u_{i,\e}^{1},u_{i,\e}^{2},u_{e,\e},w_{\e}^{1},w_{\e}^{2})$ of functions satisfying the following conditions:


\begin{enumerate}[label=(\Alph*)]
\item (Algebraic relation).
\begin{equation*}
\begin{aligned}
v_{\e}^{k}&=(u_{i,\e}^{k}-u_{e,\e})\vert_{\Gamma_{\e,T}^{k}} &\ \text{a.e. on} \ \Gamma_{\e,T}^{k}, \text{ for } k=1,2,
\\ s_{\e}&=(u_{i,\e}^{1}-u_{i,\e}^{2})\vert_{\Gamma_{\e,T}^{1,2}} &\ \text{a.e. on} \ \Gamma_{\e,T}^{1,2}.
\end{aligned}
\end{equation*}
\item (Regularity).
\begin{equation*}
\begin{aligned}
 & u_{i,\e}^{k}\in L^{2}\left(0,T;H^{1}\left( \Omega_{i,\e}^{k}\right)\right), \quad u_{e}^{\e}\in L^{2}\left(0,T;H^{1}(\Omega_{e,\e})\right),
 \\& \int_{\Omega_{e,\e}} u_{e,\e}(t,x) \ dx=0, \text{ for a.e. } t\in(0,T),
\\ & v_{\e}^{k}\in L^{2}\left( 0,T;H^{1/2}\left(\Gamma_{\e}^{k}\right)\right)\cap L^{r}\left(\Gamma_{\e,T}^{k}\right), \ r\in (2,+\infty)
\\ & s_{\e}\in L^{2}\left(\Gamma_{\e,T}^{1,2}\right), \quad w_{\e}^{k} \in L^{2}(\Gamma_{\e,T}^{k}),
\\& \pt_t v_{\e}^{k} \in L^{2}\left( 0,T;H^{-1/2}\left(\Gamma_{\e}^{k}\right)\right)+L^{r/(r-1)}\left(\Gamma_{\e,T}^{k}\right),
\\ & \pt_t s_{\e} \in L^{2}(\Gamma_{\e,T}^{1,2}), \quad \pt_t w_{\e}^{k} \in L^{2}(\Gamma_{\e,T}^{k}) \text{ for } k=1,2.
\end{aligned}
\end{equation*}
\item (Initial conditions).
\begin{equation*}
\begin{aligned}
& v_{\e}^{k}(0,x)=v_{0,\e}^{k}(x), \ w_{\e}^{k}(0,x)=w_{0,\e}^{k}(x) & \text{ a.e. on } \Gamma_{\e}^{k}, 
\\ & \text{and } s_{\e}(0,x)=s_{0,\e}(x) & \text{ a.e. on } \Gamma_{\e}^{1,2}.
\end{aligned}
\end{equation*}
\item (Variational equations).
\begin{equation}
\begin{aligned}
&\sum \limits_{k=1,2}\iint_{\Gamma_{\e,T}^{k}} \e\pt_t v_{\e}^{k} \psi^{k} \ d\sigma_xdt+\frac{1}{2}\iint_{\Gamma_{\e,T}^{1,2}} \e\pt_t s_{\e} \Psi \ d\sigma_xdt 
\\& \quad +\sum \limits_{k=1,2}\int_{\Omega_{i,\e,T}^{k}}\mathrm{M}_{i}^{\e}\nabla u_{i,\e}^{k}\cdot\nabla\varphi_{i}^{k} \ dxdt+\int_{\Omega_{e,\e,T}}\mathrm{M}_{e}^{\e}\nabla u_{e,\e}\cdot\nabla\varphi_{e} \ dxdt
\\& \quad +\sum \limits_{k=1,2}\iint_{\Gamma_{\e,T}^{k}} \e\I_{ion}\left(v_{\e}^{k},w_{\e}^{k}\right)\psi^{k} \ d\sigma_xdt+\frac{1}{2}\iint_{\Gamma_{\e,T}^{1,2}} \e \I_{gap}(s_{\e})\Psi \ d\sigma_xdt
\\& =\sum \limits_{k=1,2}\iint_{\Gamma_{\e,T}^{k}} \e\I_{app,\e}^{k}\psi^{k} \ d\sigma_xdt
\end{aligned}
\label{Fv_ike_ini_gap}
\end{equation}
\begin{equation}
\iint_{\Gamma_{\e,T}^{k}} \pt_t w_{\e}^{k}e^{k} \ d\sigma_xdt=\iint_{\Gamma_{\e,T}^{k}} H\left(v_{\e}^{k},w_{\e}^{k}\right) e^{k} \ d\sigma_xdt
\label{Fv_d_ini_gap}
\end{equation}
\end{enumerate}
for all $\varphi_{i}^{k}\in L^{2}\left(0,T;H^{1}\left( \Omega_{i,\e}^{k}\right)\right),$ $\varphi_{e}\in L^{2}\left(0,T;H^{1}(\Omega_{e,\e})\right)$ with 
\begin{itemize}
\item $\psi^{k}=\psi_{i}^{k}-\psi_{e}^{k}:=\left(\varphi_{i}^{k}-\varphi_{e}\right)\vert_{\Gamma_{\e,T}^{k}} \in L^{2}\left( 0,T;H^{1/2}\left(\Gamma_{\e}^{k}\right)\right)\cap L^{r}\left(\Gamma_{\e,T}^{k}\right)$ for $k=1,2,$
\item $\Psi=\Psi_{i}^{1}-\Psi_{i}^{2}:=\left(\varphi_{i}^{1}-\varphi_{i}^{2}\right)\vert_{\Gamma_{\e,T}^{1,2}}\in  L^{2}(\Gamma_{\e,T}^{1,2}),$
\item $e^{k}\in L^{2}(\Gamma_{\e,T}^{k})$ for $k=1,2.$
\end{itemize}
\label{Fv_gap} 

\end{defi}

\begin{rem} Due to Lions-Magenes theorem $($see \cite{boyer} p. 101$),$ the following injection 
\begin{align*}
&\mathcal{V}:=\left\lbrace  u\in L^{2}\left( 0,T;H^{1/2}\left(\Gamma_{\e}^{k}\right)\right)\cap L^{r}\left(\Gamma_{\e,T}^{k}\right) \text{ and } \pt_t u \in L^{2}\left( 0,T;H^{-1/2}\left(\Gamma_{\e}^{k}\right)\right)+L^{r/(r-1)}\left(\Gamma_{\e,T}^{k}\right)\right\rbrace 
\\ & \qquad  \subset C^0\left([0,T];L^2(\Gamma_{\e})\right), \text{ for } k=1,2  
\end{align*}
is continuous with $r\in(2,+\infty)$. Then, $v_{\e}^{k}$ $\in C^0\left([0,T];L^2(\Gamma_{\e}^{k})\right)$ for $k=1,2$. Therefore, the initial data of $v_{\e}^{k}$ for $k=1,2$ in Definition \ref{Fv_gap} is well defined. In the same manner, the initial condition  on $s_{\e}$ and on $w_{\e}^{k}$ for $k=1,2$  makes sense.
\end{rem}

\begin{thm}[Microscopic Tridomain Model] Assume that the conditions \eqref{A_Mie_gap}-\eqref{A_vw0_gap} hold. Then, System \eqref{pbscale_gap}-\eqref{cond_ini_vws_gap} possesses a unique weak solution in the sense of Definition \ref{Fv_gap} for every fixed $\e>0$.  
 
 Furthermore, this solution verifies the following energy estimates: there exists constants $C_1, C_2, C_3,C_4,$ independent of $\e$ such that:
\begin{equation}
\sum\limits_{k=1,2} \norm{\sqrt{\e}v_{\e}^{k}}_{L^{\infty}\left(0,T;L^2(\Gamma_{\e}^{k})\right)}^2+\sum\limits_{k=1,2} \norm{\sqrt{\e}w_{\e}^{k}}_{L^{\infty}\left(0,T;L^2(\Gamma_{\e}^{k})\right)}^2+\norm{\sqrt{\e} s_{\e}}_{L^{\infty}\left(0,T;L^2(\Gamma_{\e}^{1,2})\right)}^2\leq C_1
\label{E_vw_gap}
\end{equation}

\begin{equation}
\sum\limits_{k=1,2}\norm{u_{i,\e}^{k}}_{L^{2}\left(0,T;H^{1}\left( \Omega_{i,\e}^{k}\right) \right)}+\norm{u_{e}^{\e}}_{L^{2}\left(0,T;H^{1}\left(\Omega_{e,\e}\right) \right)}\leq C_2,
\label{E_u_gap}
\end{equation}

\begin{equation}
\sum\limits_{k=1,2}\norm{\e^{1/r}v_{\e}^{k}}_{L^{r}(\Gamma_{\e,T}^{k})}\leq C_3 \text{ and } \sum\limits_{k=1,2}\norm{\e^{(r-1)/r}\mathrm{I}_{a,ion}(v_{\e}^{k})}_{L^{r/(r-1)}(\Gamma_{\e,T}^{k})}\leq C_4.
\label{E_vr_gap}
\end{equation}
Moreover, if $v_{\e,0}^{k} \in H^{1/2}(\Gamma_{\e}^{k})\cap L^{r}(\Gamma_\e^{k}),$ $k=1,2,$ then there exists a constant $C_5$  independent of $\e$ such that:
\begin{equation}
\sum\limits_{k=1,2} \norm{\sqrt{\e}\pt_t v_{e}^{k}}_{L^2(\Gamma_{\e,T}^{k})}^2+\sum\limits_{k=1,2} \norm{\sqrt{\e}\pt_t w_{\e}^{k}}_{L^2(\Gamma_{\e,T}^{k})}^2+\norm{\sqrt{\e} \pt_t s_{\e}}_{L^2(\Gamma_{\e,T}^{1,2})}^2\leq C_5.
\label{E_dtv_gap}
\end{equation}

\label{thm_micro_gap}
\end{thm}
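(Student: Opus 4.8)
The plan is to construct the solution through a three-level approximation: a non-degenerate parabolic regularization of the quasi-static bulk equations, a Faedo--Galerkin discretization, and two successive limit passages. First I would fix $\eta>0$ and replace \eqref{trid_intra}--\eqref{trid_extra} by the parabolic equations $\eta\pt_t u_{i,\e}^k-\nabla\cdot(\mathrm{M}_i^\e\nabla u_{i,\e}^k)=0$ and $\eta\pt_t u_{e,\e}-\nabla\cdot(\mathrm{M}_e^\e\nabla u_{e,\e})=0$, with bulk initial data obtained by solving the stationary tridomain problem carrying the traces $v_{0,\e}^k$ on $\Gamma_\e^k$ and $s_{0,\e}$ on $\Gamma_\e^{1,2}$. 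Choosing Hilbert bases of $H^1(\Omega_{i,\e}^k)$, $k=1,2$, and of $\{u\in H^1(\Omega_{e,\e}):\int_{\Omega_{e,\e}}u\,dx=0\}$ (e.g. Neumann eigenfunctions), I look for $u_{i,\e,n}^k,u_{e,\e,n}$ in the span of the first $n$ functions, define $v_{\e,n}^k,s_{\e,n}$ by the algebraic trace relations, and $w_{\e,n}^k$ from the discretized ODE \eqref{Fv_d_ini_gap}. The regularization turns the Galerkin scheme into a genuine ODE system with locally Lipschitz right-hand side (recall $\mathrm{I}_{a,ion}\in C^1$ and $\mathrm{I}_{b,ion},H,\I_{gap}$ are affine), so Cauchy--Lipschitz yields a maximal local solution.

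The a priori estimates come from testing with the discrete solution: with $\varphi_i^k=u_{i,\e,n}^k$, $\varphi_e=u_{e,\e,n}$ in \eqref{Fv_ike_ini_gap} (so $\psi^k=v_{\e,n}^k$, $\Psi=s_{\e,n}$) and $e^k=\e\alpha_4 w_{\e,n}^k$ in \eqref{Fv_d_ini_gap}, added together. I then use the symmetry and coercivity of $\mathrm{M}_j^\e$ from \eqref{A_Mie_gap} on the bulk terms, the lower bound in \eqref{A_I_ab} to produce $\tfrac{1}{\alpha_1}\e\sum_k\|v_{\e,n}^k\|_{L^r(\Gamma_\e^k)}^r$ (lower-order contributions absorbed via \eqref{A_tildeI_a_1}), the structural inequality $\mathrm{I}_{b,ion}(w)v-\alpha_4 H(v,w)w\ge\alpha_5|w|^2$ of \eqref{A_H_Ib_a} for the membrane coupling, and $\I_{gap}(s)s=G_{gap}s^2\ge0$; the source is absorbed by Young's inequality and $\|\e^{1/2}\I_{app,\e}^k\|_{L^2}\le C$. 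After Grönwall and \eqref{A_vw0_gap} this gives bounds uniform in $n,\eta,\e$ for $\sqrt\e v_{\e,n}^k,\sqrt\e w_{\e,n}^k,\sqrt\e s_{\e,n}$ in $L^\infty(0,T;L^2)$ and for $\e^{1/r}v_{\e,n}^k$ in $L^r(\Gamma_{\e,T}^k)$, hence --- through Poincaré--Wirtinger in $\Omega_{e,\e}$ and the trace identity $u_{i,\e,n}^k=v_{\e,n}^k+u_{e,\e,n}$ on $\Gamma_\e^k$ --- for $u_{i,\e,n}^k,u_{e,\e,n}$ in $L^2(0,T;H^1)$, i.e. \eqref{E_vw_gap}--\eqref{E_vr_gap}; the growth bound in \eqref{A_I_ab} yields the bound on $\e^{(r-1)/r}\mathrm{I}_{a,ion}(v_{\e,n}^k)$ in $L^{r/(r-1)}$. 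Reading off the equations --- the normal flux $\mathrm{M}_i^\e\nabla u_{i,\e,n}^k\cdot n_i^k$ lying in $L^2(0,T;H^{-1/2})$ because it is divergence-free in the bulk --- then gives $\pt_t v_{\e,n}^k\in L^2(0,T;H^{-1/2})+L^{r/(r-1)}$, $\pt_t w_{\e,n}^k\in L^2$ and analogously for $s_{\e,n}$, so in particular the local solution extends to $[0,T]$.

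Since the only genuinely nonlinear term is $\mathrm{I}_{a,ion}(v_{\e,n}^k)$, I need strong convergence of $v_{\e,n}^k$ on the membrane; this is exactly where the compactness argument enters. By Aubin--Lions--Simon, $v_{\e,n}^k$ bounded in $L^2(0,T;H^{1/2}(\Gamma_\e^k))$ together with $\pt_t v_{\e,n}^k$ bounded in $L^2(0,T;H^{-1/2}(\Gamma_\e^k))+L^{r/(r-1)}(\Gamma_{\e,T}^k)$ and the compact embedding $H^{1/2}(\Gamma_\e^k)\hookrightarrow L^2(\Gamma_\e^k)$ give, up to a subsequence, $v_{\e,n}^k\to v_\e^k$ strongly in $L^2(\Gamma_{\e,T}^k)$ and a.e.; with the uniform $L^{r/(r-1)}$ bound this forces $\mathrm{I}_{a,ion}(v_{\e,n}^k)\rightharpoonup\mathrm{I}_{a,ion}(v_\e^k)$ in $L^{r/(r-1)}(\Gamma_{\e,T}^k)$. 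Combined with the weak convergences of $u_{i,\e,n}^k,u_{e,\e,n}$ in $L^2(0,T;H^1)$ and of $w_{\e,n}^k,s_{\e,n}$ in $L^2$ (all remaining terms being linear), I pass to the limit $n\to\infty$ in \eqref{Fv_ike_ini_gap}--\eqref{Fv_d_ini_gap} against test functions from the dense union of the Galerkin subspaces, the initial data being recovered via the Lions--Magenes embedding $\mathcal{V}\hookrightarrow C^0([0,T];L^2)$; this produces a weak solution of the $\eta$-regularized system. Repeating the same scheme with the $\eta$-uniform estimates --- the regularizing terms $\eta\pt_t u_{i,\e,\eta}^k$, $\eta\pt_t u_{e,\e,\eta}$ vanishing in the limit owing to the $\eta$-uniform control of $u_{i,\e,\eta}^k,u_{e,\e,\eta}$ in $L^2(0,T;H^1)$ --- and letting $\eta\to0$ gives a weak solution of \eqref{pbscale_gap}--\eqref{cond_ini_vws_gap} satisfying \eqref{E_vw_gap}--\eqref{E_vr_gap}.

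Uniqueness follows by subtracting the variational identities of two solutions with the same data and testing with the differences $\bar u_{i,\e}^k,\bar u_{e,\e}$ (so $\psi^k=\bar v_\e^k$, $\Psi=\bar s_\e$) and with $\e\alpha_4\bar w_\e^k$ in \eqref{Fv_d_ini_gap}: the monotonicity \eqref{A_tildeI_a_2} produces a nonnegative term from $\tilde{\mathrm{I}}_{a,ion}$, the shift $\beta_1 v$ contributes only a harmless $\beta_1\e\|\bar v_\e^k\|^2$, the coercivity \eqref{A_H_Ib_a} controls the $\mathrm{I}_{b,ion},H$ coupling, $\I_{gap}$ is monotone, and Grönwall forces all differences to vanish. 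For the additional estimate \eqref{E_dtv_gap}, under $v_{0,\e}^k\in H^{1/2}(\Gamma_\e^k)\cap L^r(\Gamma_\e^k)$ (so the bulk initial data are in $H^1$) I would test the Galerkin system with $\pt_t u_{i,\e,n}^k,\pt_t u_{e,\e,n}$, obtaining $\tfrac12\tfrac{d}{dt}\int\mathrm{M}_j^\e\nabla u\cdot\nabla u$ by symmetry of $\mathrm{M}_j^\e$, handling $\mathrm{I}_{a,ion}(v)\pt_t v$ through the convex primitive of $\tilde{\mathrm{I}}_{a,ion}$, absorbing the rest by Young, and integrating in time. The main obstacle is twofold: the quasi-static (degenerate) nature of the bulk equations forces all compactness to be extracted on the lower-dimensional membrane and gap-junction traces, through the delicate Lions--Magenes space $\mathcal{V}$; and the gap-junction coupling ties the two intracellular domains together, breaking the usual two-field bidomain structure --- keeping every estimate uniform in $\e$ (as needed for the homogenization in Part 2) while controlling these surface terms is where the real work lies.
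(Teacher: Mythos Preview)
Your proposal is correct and follows the same overall template as the paper --- Faedo--Galerkin plus a non-degenerate regularization, energy estimates, surface compactness via Aubin--Lions, and uniqueness by monotonicity and Gr\"onwall --- but the implementation differs in two noteworthy ways.

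First, the paper \emph{couples} the regularization to the Galerkin level by taking $\delta_n=1/n$ and adding not only bulk terms $\delta_n\int_{\Omega}\pt_t u\,\varphi$ but also boundary terms $\delta_n\int_{\Gamma}\pt_t\ov{u}\,\psi$ and $\delta_n\int_{\Gamma^{1,2}}\pt_t\ovo{u}\,\Psi$; the invertibility of the resulting ODE mass matrix is then proved explicitly (their Lemma~\ref{lem_Matrix}, showing $\mathbb{M}=\mathbb{M}_1+\e\mathbb{M}_2$ is positive definite). This yields a \emph{single} limit $n\to\infty$, whereas your decoupled $\eta$-then-$n$ scheme requires two passages. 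Your route is closer to the bidomain literature (e.g.\ Boulakia et al.), more modular, and your remark that $\eta\pt_t u_{j,\e,\eta}\to 0$ in the sense of distributions from the $\eta$-uniform $L^2_tH^1_x$ bound (integration by parts in time) is the right way to kill the regularizer; the paper's route is shorter but hides the degeneracy issue inside the matrix lemma.

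Second, the paper builds ad hoc Hilbert spaces $\V_{0,j},\V_{1,j}$ whose inner products already contain the boundary traces, and uses their eigenfunctions as Galerkin bases; you use plain Neumann eigenfunctions. Both work, but with your choice you should still check (as the paper does) that the discrete mass matrix --- bulk identity plus the positive semi-definite surface block coming from $\e\pt_t v^k$ and $\tfrac{\e}{2}\pt_t s$ --- is invertible; this is immediate once $\eta>0$. Finally, for the compactness step you extract $\pt_t v^k_{\e,n}\in L^2_tH^{-1/2}+L^{r/(r-1)}$ directly from the equation, which is actually cleaner than the paper: there the Aubin--Lions argument in Step~4 invokes the $L^2$ bound on $\pt_t v_n^k$ from \eqref{E_dtvws}, which strictly speaking uses the extra regularity on $v_{0,\e}^k$ that in the theorem is only assumed for \eqref{E_dtv_gap}.
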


The proof of Theorem \ref{thm_micro_gap} is treated in Section \ref{Exist_sol_gap}.

\begin{rem} The authors in \cite{bendunf19,BaderDev,BaderUnf} treated the microscopic bidomain problem where the gap junction is ignored. They considered that there are only  intra- and extracellular media separated by the membrane (sarcolemma). Comparing to \cite{bendunf19}, the microscopic tridomain model in our work consists of three elliptic equations coupled through three boundary conditions, two on each cell membrane and one on the gap junction which separates between two intracellular media. 
\end{rem}

\section{Existence and Uniqueness of solutions for the microscopic tridomain model}\label{Exist_sol_gap}
 This section is devoted to proving existence and uniqueness of solutions to the heterogeneuous microscopic tridomain model presented in Section \ref{geotrid} for fixed $\e>0.$ The proof of Theorem \ref{thm_micro_gap} is based on the Faedo-Galerkin method and carried out in several steps:
\begin{itemize}
\item[$\bullet$] Construction of the basis on the intra- and extracellular domains.
\item[$\bullet$] Construction and local existence of approximate solutions.
\item[$\bullet$] Find some a priori estimates of the approximate solutions.
\item[$\bullet$] Existence and uniqueness of solution to the microscopic tridomain model.
\end{itemize}
 We refer the reader to the well-posedness results for weak solutions of the microscopic bidomain model, established in \cite{char09,bendunf19} by using a Faedo-Galerkin technique. See also \cite{bend,boulakia} for a similar approach, based on a parabolic regularization technique.

 In this proof, we will remove the $\e$-dependence in the solution $\left(u_{i,\e}^{1},u_{i,\e}^{2},u_{e,\e},v_{\e}^{1},v_{\e}^{2},s_{\e},w_{\e}^{1},w_{\e}^{2} \right)$ for simplification of notation. The demonstration is described as follows:
\subsection*{Step 1: Construction of the basis}
We first consider functions $\phi,$ $\tilde{\phi}\in C^{0}(\overline{\Omega}_{i,\e}^{k})$ and we let $\V_{0,i}^{k}$ denote the completion of $C^{0}(\overline{\Omega}_{i,\e}^{k})$ under the norm induced by the inner product $\langle\cdot,\cdot\rangle_{\V_{0,i}^k}$ which defined by
\begin{equation*}
\langle\Theta, \tilde{\Theta}\rangle_{\V_{0,i}^{k}}:=\displaystyle \int_{\Omega_{i,\e}^{k}}\phi \tilde{\phi} \ dx+\int_{\Gamma_{\e}^{k}} \phi\vert_{\Gamma_{\e}^{k}}\tilde{\phi}\vert_{\Gamma_{\e}^{k}}d\sigma+\int_{\Gamma_{\e}^{1,2}} \phi\vert_{\Gamma_{\e}^{1,2}}\tilde{\phi}\vert_{\Gamma_{\e}^{1,2}}d\sigma, \text{ for } k=1,2,
\end{equation*}
where $\Theta={}^{t}\left( \phi \quad \phi\vert_{\Gamma_{\e}^{k}} \quad \phi\vert_{\Gamma_{\e}^{1,2}} \right),$ $\tilde{\Theta}={}^{t}\left( \tilde{\phi} \quad \tilde{\phi}\vert_{\Gamma_{\e}^{k}} \quad \tilde{\phi}\vert_{\Gamma_{\e}^{1,2}} \right).$ Similarly,  for functions $\phi,$ $\tilde{\phi}\in C^{1}(\overline{\Omega}_{i,\e}^{k})$ and we let $\V_{1,i}^{k}$ denote the completion of $C^{1}(\overline{\Omega}_{i,\e}^{k})$ under the norm induced by the inner product $\langle\cdot,\cdot\rangle_{\V_{1,i}^{k}}$ which defined by
\begin{align*}
\langle\Theta, \tilde{\Theta}\rangle_{\V_{1,i}^{k}}
&:=\displaystyle \int_{\Omega_{i,\e}^{k}}\mathrm{M}_{i}^{\e}\nabla \phi\cdot\nabla\tilde{\phi} \ dx+\int_{\Gamma_{\e}^{k}} \phi\vert_{\Gamma_{\e}^{k}}\tilde{\phi}\vert_{\Gamma_{\e}^{k}} \ d\sigma+\int_{\Gamma_{\e}^{k}} \nabla_{\Gamma_{\e}^{k}}\phi\cdot \nabla_{\Gamma_{\e}^{k}}\tilde{\phi} \ d\sigma
\\& +\int_{\Gamma_{\e}^{1,2}} \phi\vert_{\Gamma_{\e}^{1,2}}\tilde{\phi}\vert_{\Gamma_{\e}^{1,2}} \ d\sigma+\int_{\Gamma_{\e}^{1,2}} \nabla_{\Gamma_{\e}^{1,2}}\phi\cdot \nabla_{\Gamma_{\e}^{1,2}}\tilde{\phi} \ d\sigma, \text{ for } k=1,2
\end{align*}
where $\nabla_{\Gamma}$ denotes the tangential gradient operator on $\Gamma$ $(\Gamma:=\Gamma_{\e}^{k},\Gamma_{\e}^{1,2}).$  We note that the following injections hold:
\begin{equation*}
\V_{0,i}^{k}\subset L^{2}(\Omega_{i,\e}^{k}), \text{ and } \V_{1,i}^{k}\subset H^{1}(\Omega_{i,\e}^{k}).
\end{equation*}
Moreover, the injection from $\V_{1,i}^{k}$ to $\V_{0,i}^{k}$ is continous and compact for $k=1,2$. We refer the reader to \cite{ciprian,racke} for similar approaches. It follows from a well-known result (see e.g. \cite{temam} p. 54) that the  bilinear form $$a(\Theta, \tilde{\Theta}):=\langle\Theta, \tilde{\Theta}\rangle_{\V_{1,i}^{k}}$$ 
defines a strictly positive self adjoint unbounded operator $\mathcal{B}_{i}^{k}$
\begin{equation*}
\mathcal{B}_{i}^{k} : D(\mathcal{B}_{i}^{k}) = \lbrace\Theta\in \V_{1,i}^{k} : \mathcal{B}_{i}^{k}\Theta \in \V_{1,i}^{k}\rbrace  \rightarrow \V_{0,i}^{k}
\end{equation*}
such that, for any $\tilde{\Theta}\in \V_{1,i}^{k},$ we have $\langle\mathcal{B}_{i}^{k}\Theta, \tilde{\Theta}\rangle_{\V_{0,i}^{k}} = a(\Theta, \tilde{\Theta}).$ Thus, for $n\in \mathbb{N},$ we take a complete system of eigenfunctions $\lbrace\Theta_{i,n}^{k}={}^{t}\left( \phi_{i,n}^{k} \quad \psi_{i,n}^{k} \quad \Psi_{i,n}^{k} \right)\rbrace_{n}$ of the problem $$\mathcal{B}_{i}^{k}\Theta_{i,n}^{k}=\lambda_{n}\Theta_{i,n}^{k}, \text{ in } \V_{0,i}^{k}, \text{ for } k=1,2,$$ 
with \begin{itemize}
\item $\lbrace\lambda_{n}\rbrace_n$ be a sequence such that $0<\lambda_{1}\leq \lambda_{2},\dots,\lambda_{n}\rightarrow \infty$ as $n\rightarrow \infty.$
\item $\Theta_{i,n}^{k}\in D(\mathcal{B}_{i}^{k}),$ $\psi_{i,n}^{k}:=\phi_{i,n}^{k}\vert_{\Gamma_\e^{k}}$ and $\Psi_{i,n}^{k}:=\phi_{i,n}^{k}\vert_{\Gamma_\e^{1,2}}$ where $\phi_{i,n}^{k}, \ \psi_{i,n}^{k}$ and $\Psi_{i,n}^{k}$ are regular enough for $k=1,2$.
\end{itemize}   
Moreover, the eigenvectors $\lbrace\Theta_{i,n}^{k}\rbrace_{n}$ turn out to form an orthogonal basis in $\V_{1,i}^{k}$ and $\V_{0,i}^{k},$ and they may be assumed to be normalized in the norm of $\V_{0,i}^{k}$ for $k=1,2$. Since $C^{1}(\overline{\Omega}_{i,\e}^{k})\subset \V_{1,i}^{k} \subset H^{1}(\Omega_{i,\e}^{k})$ and $C^{1}(\overline{\Omega}_{i,\e}^{k})$ is dense in $H^{1}(\Omega_{i,\e}^{k})$ then $\V_{1,i}^{k}$ is dense in $H^{1}(\Omega_{i,\e}^{k})$ for the $H^{1}$-norm. Therefore, $\lbrace\Theta_{i,n}^{k}\rbrace_{n}$ is a basis in $H^{1}(\Omega_{i,\e}^{k})$ for the $H^{1}$-norm.
 
 On the other hand, we consider a basis $\lbrace \zeta_{n}^{k}\rbrace_{n}, n\in\mathbb{N}$ that is orthonormal in $L^{2}(\Gamma_{\e}^{k})$ and orthogonal in $H^{1}(\Gamma_{\e}^{k})$ and we set the spaces 
\begin{equation*}
\P_{i,\ell}^{k}=\text{span}\lbrace\Theta_{i,1}^{k},\dots, \Theta_{i,\ell}^{k}\rbrace, \quad \P_{i,\infty}^{k}=\overset{\infty}{\underset{\ell=1}{\bigcup}}\P_{i,\ell}^{k},
\end{equation*}
\begin{equation*}
\K_{i,\ell}^{k}=\text{span}\lbrace\zeta_{1}^{k},\dots, \zeta_{\ell}^{k}\rbrace, \quad \K_{i,\infty}^{k}=\overset{\infty}{\underset{\ell=1}{\bigcup}}\K_{i,\ell}^{k},
\end{equation*}
where $\P_{i,\infty}^{k}$ and $\K_{i,\infty}^{k}$ are respectively dense subspaces of $\V_{1,i}^{k}$ and $H^{1}(\Gamma_{\e}^{k})$ for $k=1,2.$
\begin{rem}
Analogously, we construct a basis on the extracellular domain. We let $\V_{p,e}$ denote the completion of $C^{p}(\overline{\Omega}_{e,\e})$ under the norm induced by the inner product $\langle\cdot,\cdot\rangle_{\V_{p,e}}$  for $\phi,$ $\tilde{\phi}\in C^{p}(\overline{\Omega}_{e,\e}),$ $p=0,1$  which respectively defined by
\begin{equation*}
\langle\Theta', \tilde{\Theta}'\rangle_{\V_{0,e}}:=\displaystyle \int_{\Omega_{e,\e}}\phi \tilde{\phi} \ dx+\sum \limits_{k=1,2}\int_{\Gamma_{\e}^{k}} \phi\vert_{\Gamma_{\e}^{k}}\tilde{\phi}\vert_{\Gamma_{\e}^{k}}d\sigma,
\end{equation*}
and
\begin{align*}
\langle\Theta', \tilde{\Theta}'\rangle_{\V_{1,e}}
&:=\displaystyle \int_{\Omega_{e,\e}}\mathrm{M}_{e}^{\e}\nabla \phi\cdot\nabla\tilde{\phi} \ dx+\sum \limits_{k=1,2}\left[ \int_{\Gamma_{\e}^{k}} \phi\vert_{\Gamma_{\e}^{k}}\tilde{\phi}\vert_{\Gamma_{\e}^{k}} \ d\sigma+\int_{\Gamma_{\e}^{k}} \nabla_{\Gamma_{\e}^{k}}\phi\cdot \nabla_{\Gamma_{\e}^{k}}\tilde{\phi} \ d\sigma\right], 
\end{align*}
where $\Theta'={}^{t}\left( \phi \quad \phi\vert_{\Gamma_{\e}^{1}} \quad \phi\vert_{\Gamma_{\e}^{2}}  \right),$ $\tilde{\Theta}'={}^{t}\left( \tilde{\phi} \quad \tilde{\phi}\vert_{\Gamma_{\e}^{1}} \quad \tilde{\phi}\vert_{\Gamma_{\e}^{2}} \right).$ Similarly, we take a complete basis which is orthogonal in $\V_{1,e}$ and orthonormal in $\V_{0,e}$ and we set the spaces 
\begin{equation*}
\P_{e,\ell}=\text{span}\lbrace\Theta_{e,1},\dots, \Theta_{e,\ell}\rbrace, \quad \P_{e,\infty}=\overset{\infty}{\underset{\ell=1}{\bigcup}}\P_{e,\ell},
\end{equation*}
where $\P_{e,\infty}$ is a dense subspace of $\V_{1,e}.$
\end{rem}
\subsection*{Step 2: Construction and local existence of approximate solutions}

 Supplied with the basis introduced in the first step, 
 we look for the approximate solutions  as sequences $\lbrace u_{i,n}^{k}\rbrace_{n>1},$ $\lbrace u_{e,n}\rbrace_{n>1}$ and $\lbrace w_{n}^{k}\rbrace_{n>1},$ $k=1,2$ defined for $t>0$ and $x\in \Omega$ by:
\begin{equation}
\begin{aligned}
& U_{i}^{k}=\left(
\begin{array}{c}
u_{i,n}^{k} \\ \\ \ov{u}_{i,n}^{k} \\ \\ \ovo{u}_{i,n}^{k}
\end{array}
\right)
:=\sum \limits_{\ell=1}^{n} d_{i,\ell}^{k}(t)\left(
\begin{array}{c}
\phi_{i,\ell}^{k} \\ \\ \psi_{i,\ell}^{k} \\ \\ \Psi_{i,\ell}^{k}
\end{array}
\right), \quad U_{e}=\left(
\begin{array}{c}
u_{e,n} \\ \\ \ov{u}_{e,n}^{1} \\ \\ \ov{u}_{e,n}^{2}
\end{array}
\right)
:=\sum \limits_{\ell=1}^{n} d_{e,\ell}(t)\left(
\begin{array}{c}
\phi_{e,\ell} \\ \\ \psi_{e,\ell}^{1} \\ \\ \psi_{e,\ell}^{2}
\end{array}
\right) 
\\ & \quad \text{and }  w_{n}^{k}:=\sum \limits_{\ell=1}^{n} c_{\ell}^{k}(t)\zeta_{\ell}^{k}(x),
\end{aligned}
\label{Approx_sol}
\end{equation}
with $\phi_{i,\ell}^{k}\vert_{\Gamma_{\e}^{k}}=\psi_{i,\ell}^{k},$ $\phi_{i,\ell}^{k}\vert_{\Gamma_{\e}^{1,2}}=\Psi_{i,\ell}^{k}$ and $\phi_{e,\ell}\vert_{\Gamma_{\e}^{k}}=\psi_{e,\ell}^k$ for $k=1,2.$ To apply the Faedo-Galerkin scheme, we first regularize the microscopic tridomain system \eqref{pbscale_gap}-\eqref{cond_ini_vws_gap} using specific approximation as follows (recall that our system is degenerate)
\begin{equation}
\begin{aligned}
&(\e+\delta_{n})\int_{\Gamma_{\e}^{1}} \pt_t \ov{u}_{i,n}^{1} \psi_{i}^{1} \ d\sigma_x-\e\int_{\Gamma_{\e}^{1}} \pt_t \ov{u}_{e,n}^{1} \psi_{i}^{1} \ d\sigma_x +\delta_{n}\int_{\Omega_{i,\e}^{1}} \pt_t u_{i,n}^{1} \phi_{i}^{1} \ dx
\\& \quad +(\dfrac{\e}{2}+\delta_{n})\int_{\Gamma_{\e}^{1,2}} \pt_t \ovo{u}_{i,n}^{1} \Psi_{i}^{1} \ d\sigma_x-\dfrac{\e}{2}\int_{\Gamma_{\e}^{1,2}} \pt_t \ovo{u}_{i,n}^{2} \Psi_{i}^{1} \ d\sigma_x 
\\& =\int_{\Gamma_{\e}^{1}} \e\left(-\I_{ion}(v_{n}^{1},w_{n}^{1})+\I_{app,\e}^{1}\right)\psi_{i}^{1} \ d\sigma_x
\\&\quad -\frac{1}{2}\int_{\Gamma_{\e}^{1,2}} \e \I_{gap}(s_{n})\Psi_{i}^{1} \ d\sigma_x-\int_{\Omega_{i,\e}^{1}}\mathrm{M}_{i}^{\e}\nabla u_{i,n}^{1}\cdot\nabla\phi_{i}^{1} \ dx
\end{aligned}
\label{Fv_i1_n_gap}
\end{equation}
\begin{equation}
\begin{aligned}
&(\e+\delta_{n})\int_{\Gamma_{\e}^{2}} \pt_t \ov{u}_{i,n}^{2} \psi_{i}^{2} \ d\sigma_x-\e\int_{\Gamma_{\e}^{2}} \pt_t \ov{u}_{e,n}^{2} \psi_{i}^{2} \ d\sigma_x +\delta_{n}\int_{\Omega_{i,\e}^{2}} \pt_t u_{i,n}^{2} \phi_{i}^{2} \ dx
\\&\quad -\dfrac{\e}{2}\int_{\Gamma_{\e}^{1,2}} \pt_t \ovo{u}_{i,n}^{1} \Psi_{i}^{2} \ d\sigma_x+(\dfrac{\e}{2}+\delta_{n})\int_{\Gamma_{\e}^{1,2}} \pt_t \ovo{u}_{i,n}^{2} \Psi_{i}^{2} \ d\sigma_x 
\\& =\int_{\Gamma_{\e}^{2}} \e\left(-\I_{ion}(v_{n}^{2},w_{n}^{2})+\I_{app,\e}^{2}\right)\psi_{i}^{2} \ d\sigma_x
\\&\quad +\frac{1}{2}\int_{\Gamma_{\e}^{1,2}} \e \I_{gap}(s_{n})\Psi_{i}^{2} \ d\sigma_x-\int_{\Omega_{i,\e}^{2}}\mathrm{M}_{i}^{\e}\nabla u_{i,n}^{2}\cdot\nabla\phi_{i}^{2} \ dx
\end{aligned}
\label{Fv_i2_n_gap}
\end{equation}
\begin{equation}
\begin{aligned}
&-\e\sum \limits_{k=1,2}\int_{\Gamma_{\e}^{k}} \pt_t \ov{u}_{i,n}^{k} \psi_{e}^{k} \ d\sigma_x+(\e+\delta_{n})\sum \limits_{k=1,2}\int_{\Gamma_{\e}^{k}} \pt_t \ov{u}_{e,n}^{k} \psi_{e}^{k} \ d\sigma_x +\delta_{n}\int_{\Omega_{e,\e}} \pt_t u_{e,n} \phi_{e} \ dx 
\\& =\sum \limits_{k=1,2}\int_{\Gamma_{\e}^{k}} \e\left(\I_{ion}(v_{n}^{k},w_{n}^{k})-\I_{app,\e}^{k}\right)\psi_{e}^{k} \ d\sigma_x-\int_{\Omega_{e,\e}}\mathrm{M}_{e}^{\e}\nabla u_{e,n}\cdot\nabla\phi_{e} \ dx
\end{aligned}
\label{Fv_e_n_gap}
\end{equation}
\begin{equation}
\int_{\Gamma_{\e}^{k}} \pt_t w_{n}^{k}\zeta^{k} \ d\sigma_x=\int_{\Gamma_{\e}^{k}} H\left(v_{n}^{k},w_{n}^{k}\right) \zeta^{k} \ d\sigma_x,
\label{Fv_d_n_gap}
\end{equation}
where the regularization parameter $\delta_{n}=\dfrac{1}{n},$ $\Theta_{i}^{k}={}^{t}\left(
\phi_{i}^{k} \ \ \psi_{i}^{k} \ \ \Psi_{i}^{k}
\right)\in \P_{i,n}^{k},$ $\zeta^{k}\in \mathcal{K}_{n}^{k}$ for $k=1,2,$  and $\Theta_{e}={}^{t}\left(
\phi_{e} \ \ \psi_{e}^{1} \ \ \psi_{e}^{2} \right)\in \P_{e,n}.$ The regularization terms multiplied by $\delta_{n}$ have been added to overcome degeneracy in \eqref{Fv_ike_ini_gap}.
Moreover, the resulting regularized problem is supplemented with initial conditions:
\begin{equation}
\begin{aligned}
& u_{i,n}^{k}(0,x)=u_{0,i,n}^{k}(x):=\sum \limits_{\ell=1}^{n} d_{i,\ell}^{k}(0)
\phi_{i,\ell}^{k}(x),
\\& \ov{u}_{i,n}^{k}(0,x)=\ov{u}_{0,i,n}^{k}(x):=\sum \limits_{\ell=1}^{n} d_{i,\ell}^{k}(0)\psi_{i,\ell}^{k}(x),
\\& \ovo{u}_{i,n}^{k}(0,x)=\ovo{u}_{0,i,n}^{k}(x):=\sum \limits_{\ell=1}^{n} d_{i,\ell}^{k}(0)\Psi_{i,\ell}^{k}(x), \quad d_{i,\ell}^{k}(0):=\langle U_{0,i}^{k},\Theta_{i,\ell}^{k}\rangle_{\V_{0,i}^{k}},
\\& u_{e,n} (0,x)=u_{0,e,n}(x):=\sum \limits_{\ell=1}^{n} d_{e,\ell}(0) \phi_{e,\ell},
\\& \ov{u}_{e,n}^{k}(0,x)=\ov{u}_{0,e,n}(x):=\sum \limits_{\ell=1}^{n} d_{e,\ell}(0)\psi_{e,\ell}^{k}(x), \quad d_{e,\ell}(0):=\langle U_{0,e},\Theta_{e,\ell}\rangle_{\V_{0,e}},
\\ & w_{n}^{k}(0,x)=w_{0,n}^{k}:=\sum \limits_{\ell=1}^{n} c_{\ell}^{k}(0)\zeta_{\ell}^{k}(x), \quad c_{\ell}^{k}(0):=\langle w_{0}^{k},\zeta_{\ell}^{k}\rangle_{L^{2}(\Gamma_{\e}^{k})},
\end{aligned}
\label{Approx_inisol}
\end{equation}
where $U_{0,i}^{k}:=U_{i}^{k}(0,x),$ for $k=1,2$ and $U_{0,e}:=U_{e}(0,x).$

Next, we prove in the following lemma the local existence of solutions for the previous regularized problem:

 \begin{lem}[Local existence of solutions for the regularized problems]  Assume that the conditions \eqref{A_Mie_gap}-\eqref{A_vw0_gap} hold. Then, there exits a positive time $0<t_0 \leq T$ such that System \eqref{Fv_i1_n_gap}-\eqref{Approx_inisol} admit a unique solution over the time interval $[0,t_0].$
 \label{local_exist}
\end{lem}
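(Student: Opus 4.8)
The plan is to recast the finite-dimensional regularized Galerkin system \eqref{Fv_i1_n_gap}--\eqref{Fv_d_n_gap} with the initial data \eqref{Approx_inisol} as a Cauchy problem for an ordinary differential system in the coefficient vector
\[
\mathbf{X}(t):=\bigl(d_{i,\ell}^{1}(t),\,d_{i,\ell}^{2}(t),\,d_{e,\ell}(t),\,c_{\ell}^{1}(t),\,c_{\ell}^{2}(t)\bigr)_{1\le\ell\le n}\in\R^{5n},
\]
and then to apply the Cauchy--Lipschitz (Carathéodory) theorem. First I would insert the ansatz \eqref{Approx_sol} into \eqref{Fv_i1_n_gap}--\eqref{Fv_d_n_gap} and test successively against the basis elements $\Theta_{i,m}^{k}\in\P_{i,n}^{k}$, $\Theta_{e,m}\in\P_{e,n}$ and $\zeta_{m}^{k}\in\mathcal{K}_{n}^{k}$, $m=1,\dots,n$. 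Collecting the terms carrying a time derivative on the left produces a linear algebraic system $\mathcal{A}_{n}\,\dot{\mathbf{X}}(t)=\mathbf{F}_{n}(t,\mathbf{X}(t))$ with $\mathbf{X}(0)=\mathbf{X}_{0}$, where $\mathcal{A}_{n}$ is a constant square matrix and $\mathbf{X}_{0}$ is the (finite) coefficient vector read off from \eqref{Approx_inisol}, well defined because the initial data are traces of bounded sequences in $C^{1}(\overline{\Omega})$.

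The central step is the invertibility of the mass matrix $\mathcal{A}_{n}$, and this is precisely what the regularization parameter $\delta_{n}=1/n>0$ is for. Grouping the $\delta_{n}$-contributions in \eqref{Fv_i1_n_gap}--\eqref{Fv_e_n_gap} reconstitutes exactly the inner products $\langle\cdot,\cdot\rangle_{\V_{0,i}^{k}}$ and $\langle\cdot,\cdot\rangle_{\V_{0,e}}$ of Step~1, while the term $\int_{\Gamma_{\e}^{k}}\pt_t w_{n}^{k}\zeta^{k}$ contributes $\langle\cdot,\cdot\rangle_{L^{2}(\Gamma_{\e}^{k})}$; since the bases $\{\Theta_{i,n}^{k}\}$, $\{\Theta_{e,n}\}$, $\{\zeta_{n}^{k}\}$ are orthonormal in these respective spaces, this part of $\mathcal{A}_{n}$ is block-diagonal, equal to $\mathrm{diag}(\delta_{n}I,\delta_{n}I,\delta_{n}I,I,I)$, hence symmetric positive definite. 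The remaining part of $\mathcal{A}_{n}$, carrying the factors $\e$ and $\e/2$, is the Gram matrix, on the finite-dimensional test space, of the bilinear form
\[
m\bigl((u_{i}^{1},u_{i}^{2},u_{e}),(\varphi_{i}^{1},\varphi_{i}^{2},\varphi_{e})\bigr)=\sum_{k=1,2}\e\!\int_{\Gamma_{\e}^{k}}\!(u_{i}^{k}-u_{e})(\varphi_{i}^{k}-\varphi_{e})\,d\sigma_x+\frac{\e}{2}\!\int_{\Gamma_{\e}^{1,2}}\!(u_{i}^{1}-u_{i}^{2})(\varphi_{i}^{1}-\varphi_{i}^{2})\,d\sigma_x,
\]
i.e.\ the degenerate mass form of the tridomain system, which is symmetric and positive semidefinite. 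Thus $\mathcal{A}_{n}$ is the sum of a positive definite and a positive semidefinite symmetric matrix, so $\mathcal{A}_{n}\succeq\delta_{n}I$ and $\mathcal{A}_{n}$ is invertible.

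Finally, set $\mathbf{G}_{n}(t,\mathbf{X}):=\mathcal{A}_{n}^{-1}\mathbf{F}_{n}(t,\mathbf{X})$ and check the Cauchy--Lipschitz hypotheses. The components of $\mathbf{F}_{n}$ are: the conduction terms $\int\mathrm{M}_{j}^{\e}\nabla u\cdot\nabla\phi$, linear in $\mathbf{X}$ with coefficients bounded by \eqref{A_Mie_gap}; the terms built from $\I_{gap}$, $\I_{b,ion}$ and $H$, linear in $\mathbf{X}$ since these functions are linear; the term $\e\int_{\Gamma_{\e}^{k}}\I_{a,ion}(v_{n}^{k})\psi^{k}$, which is a $C^{1}$---hence locally Lipschitz---function of $\mathbf{X}$ because $v_{n}^{k}=\ov{u}_{i,n}^{k}-\ov{u}_{e,n}^{k}$ is affine in $\mathbf{X}$ and $\I_{a,ion}\in C^{1}(\R)$; and the forcing $\e\int_{\Gamma_{\e}^{k}}\I_{app,\e}^{k}\psi^{k}$, independent of $\mathbf{X}$ and in $L^{2}(0,T)$ as a function of $t$ by \eqref{A_iapp_gap}. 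Hence $\mathbf{G}_{n}$ is measurable in $t$, locally Lipschitz in $\mathbf{X}$ with Lipschitz constant independent of $t$, and dominated on bounded sets of $\mathbf{X}$ by an $L^{1}(0,T)$ function of $t$; the Carathéodory existence theorem together with this Lipschitz bound yields a unique absolutely continuous solution on a maximal interval $[0,t_{\max})$, and any $0<t_{0}\le\min(t_{\max},T)$ satisfies the claim. The only step requiring genuine attention is the invertibility of $\mathcal{A}_{n}$, where $\delta_{n}>0$ is essential; the remainder is a routine verification of the Cauchy--Lipschitz hypotheses.
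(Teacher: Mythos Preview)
Your proposal is correct and follows essentially the same route as the paper: you reduce the Galerkin system to an ODE $\mathcal{A}_{n}\dot{\mathbf{X}}=\mathbf{F}_{n}$, split the mass matrix as the sum of the $\delta_{n}$-regularization part (block-diagonal and positive definite by orthonormality of the bases in $\V_{0,i}^{k}$, $\V_{0,e}$, $L^{2}(\Gamma_{\e}^{k})$) and the degenerate $\e$-part (positive semidefinite), and then invoke Carath\'eodory. The paper does exactly this, with the positive semidefiniteness of the $\e$-part isolated as a separate lemma and verified by an explicit quadratic-form computation; your Gram-matrix observation for the bilinear form $m$ is the same fact stated more conceptually.
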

\begin{proof} The goal is to determine the coefficients $\mathbf{d}_{i}^{k}=\lbrace d_{i,\ell}^{k}\rbrace_{\ell=1}^{n},$ $\mathbf{d}_e=\lbrace d_{e,\ell}\rbrace_{\ell=1}^{n}$ and $\mathbf{c}^{k}=\lbrace c_{\ell}^{k}\rbrace_{\ell=1}^{n}$ for $k=1,2.$ For this purpose, if $n$ fixed, we choose $\Theta_{i}^{k}=\Theta_{i,m}^{k},$ $\Theta_{e}=\Theta_{e,m}$ and $\zeta^{k}=\zeta_{m}^{k}$ for $1\leq m \leq n$ and substitute the approximate solutions \eqref{Approx_sol} into \eqref{Fv_i1_n_gap}-\eqref{Fv_d_n_gap}. Then, the problem \eqref{Fv_i1_n_gap}-\eqref{Fv_d_n_gap}  is equivalent to the system of ordinary differential equations (ODE) in the following compact form:
\begin{equation}
\begin{aligned}
(\e+\delta_{n})\ov{\mathbb{A}}_{ii}^{1}(\mathbf{d}_{i}^{1})'-\e\ov{\mathbb{A}}_{ie}^{1}\mathbf{d}'_e+\delta_{n}\mathbb{A}_{ii}^{1}(\mathbf{d}_{i}^{1})'+(\dfrac{\e}{2}+\delta_{n})\ovo{\mathbb{A}}_{ii}^{1}(\mathbf{d}_{i}^{1})'-\dfrac{\e}{2}\ovo{\mathbb{A}}_{ii}^{1,2}(\mathbf{d}_{i}^{2})' &=& \mathbb{F}_{i}^{1}(t,\mathbf{d}_{i}^{1},\mathbf{d}_{i}^{2},\mathbf{d}_{e},\mathbf{c}^{1},\mathbf{c}^{2})
\\  (\e+\delta_{n})\ov{\mathbb{A}}_{ii}^{2}(\mathbf{d}_{i}^{2})'-\e\ov{\mathbb{A}}_{ie}^{2}\mathbf{d}'_e+\delta_{n}\mathbb{A}_{ii}^{2}(\mathbf{d}_{i}^{2})'-\dfrac{\e}{2}\ovo{\mathbb{A}}_{ii}^{1,2}(\mathbf{d}_{i}^{1})'+(\dfrac{\e}{2}+\delta_{n})\ovo{\mathbb{A}}_{ii}^{2}(\mathbf{d}_{i}^{2})' &=&\mathbb{F}_{i}^{2}(t,\mathbf{d}_{i}^{1},\mathbf{d}_{i}^{2},\mathbf{d}_{e},\mathbf{c}^{1},\mathbf{c}^{2})
\\  \sum \limits_{k=1,2}\left[ -\e \ov{\mathbb{A}}_{ie}^{k}(\mathbf{d}_{i}^{k})'+(\e+\delta_{n}) \ov{\mathbb{A}}_{ee}^{k}\mathbf{d}'_{e}\right] +\delta_{n}\mathbb{A}_{ee}\mathbf{d}_{e}' &=& \mathbb{F}_{e}(t,\mathbf{d}_{i}^{1},\mathbf{d}_{i}^{2},\mathbf{d}_{e},\mathbf{c}^{1},\mathbf{c}^{2})
\\  \mathbb{G}^{k}(\mathbf{c}^{k})' &=& \mathbb{H}^{k}(t,\mathbf{d}_{i}^{1},\mathbf{d}_{i}^{2},\mathbf{d}_{e},\mathbf{c}^{1},\mathbf{c}^{2})
\end{aligned}
\label{ODE_sys}
\end{equation}
with the $(\ell,m)$ entry of matrix:
\begin{itemize}
\item $\mathbb{A}_{ii}^{k}$ is $\langle\phi_{i,\ell}^{k}, \phi_{i,m}^{k}\rangle_{L^{2}(\Omega_{i,\e}^{k})}$ $\Big($ resp. of $\mathbb{A}_{ee}$ is $\langle\phi_{e,\ell}, \phi_{e,m}\rangle_{L^{2}(\Omega_{e,\e})}\Big)$,
\item $\ov{\mathbb{A}}_{ii}^{k}$ is $\langle\psi_{i,\ell}^{k}, \psi_{i,m}^{k}\rangle_{L^{2}(\Gamma_{\e}^{k})}$ $\Big($ resp.  of $\ov{\mathbb{A}}_{ee}^{k}$ is $\langle\psi_{e,\ell}^{k}, \psi_{e,m}^{k}\rangle_{L^{2}(\Gamma_{\e}^{k})}\Big),$
\item $\ov{\mathbb{A}}_{ie}^{k}$ is $\langle\psi_{i,\ell}^{k}, \psi_{e,m}^{k}\rangle_{L^{2}(\Gamma_{\e}^{k})},$
\item $\ovo{\mathbb{A}}_{ii}^{k}$ is $\langle\Psi_{i,\ell}^{k}, \Psi_{i,m}^{k}\rangle_{L^{2}(\Gamma_{\e}^{k})}$ $\Big($ resp. of $\ovo{\mathbb{A}}_{ii}^{1,2}$ is $\langle\Psi_{i,\ell}^{1}, \Psi_{i,m}^{2}\rangle_{L^{2}(\Gamma_{\e}^{1,2})}\Big),$
\item $\mathbb{G}^{k}$ is $\langle\zeta_{\ell}^{k}, \zeta_{m}^{k}\rangle_{L^{2}(\Gamma_{\e}^{k})},$
\end{itemize}
for $1\leq \ell,m\leq n$ and $k=1,2.$ Herein, the vectors $\mathbb{F}_{i}^{k},$ $\mathbb{F}_{e}$ and $\mathbb{H}^{k}$ for $k=1,2$ correspond to the right hand sides of the equations given in \eqref{Fv_i1_n_gap}-\eqref{Fv_d_n_gap}.

  Furthermore, the first three equations in ODE system \eqref{ODE_sys} can be written  as follows:
\begin{equation}
\mathbb{M}
\begin{bmatrix}
(\mathbf{d}_{i}^{1})' \\
(\mathbf{d}_{i}^{2})' \\
\mathbf{d}_{e}' \\
(\mathbf{c}^{1})' \\
(\mathbf{c}^{2})' \\
\end{bmatrix}
=\begin{bmatrix}
\mathbb{F}_{i}^{1} \\
\mathbb{F}_{i}^{2} \\
\mathbb{F}_{e} \\
\mathbb{H}^{1} \\
\mathbb{H}^{2} \\
\end{bmatrix}, 
\label{ODE_MDF_ie}
\end{equation} 
with $\mathbb{M}:=\mathbb{M}_1+\e\mathbb{M}_2$ and each matrix defined by:
 \begin{equation}
\mathbb{M}_1=
\begin{bmatrix}
\delta_n\left( \ov{\mathbb{A}}_{ii}^{1}+\mathbb{A}_{ii}^{1}+\ovo{\mathbb{A}}_{ii}^{1}\right)  & 0 & 0 & 0 & 0 \\
0  &\delta_n\left( \ov{\mathbb{A}}_{ii}^{2}+\mathbb{A}_{ii}^{2}+\ovo{\mathbb{A}}_{ii}^{2}\right)  & 0 & 0 & 0 \\
0 & 0 & \delta_n\left( \ov{\mathbb{A}}_{ee}^{1}+\ov{\mathbb{A}}_{ee}^{2}+\mathbb{A}_{ee}\right) & 0 & 0  \\
0 & 0 & 0 & \mathbb{G}^{1} & 0  \\
0 & 0 & 0 & 0 & \mathbb{G}^{2}  \\
\end{bmatrix}
\label{Matrix_1}
\end{equation}
and 
\begin{equation}
\mathbb{M}_2=
\begin{bmatrix}
\ov{\mathbb{A}}_{ii}^{1}+\frac{1}{2}\ovo{\mathbb{A}}_{ii}^{1} & -\frac{1}{2}\ovo{\mathbb{A}}_{ii}^{1,2}  & -\ov{\mathbb{A}}_{ie}^{1} & 0 & 0
\\
-\frac{1}{2}{}^{t}\ovo{\mathbb{A}}_{ii}^{1,2}  &\ov{\mathbb{A}}_{ii}^{2}+\frac{1}{2}\ovo{\mathbb{A}}_{ii}^{2} & -\ov{\mathbb{A}}_{ie}^{2} & 0 & 0
\\
-{}^{t}\ov{\mathbb{A}}_{ie}^{1} & -{}^{t}\ov{\mathbb{A}}_{ie}^{2} & \ov{\mathbb{A}}_{ee}^{1}+\ov{\mathbb{A}}_{ee}^{2} & 0 & 0
\\
 0 & 0 & 0 & 0 & 0
\\
 0 & 0 & 0 & 0 & 0
\\
\end{bmatrix}.
\label{Matrix_2}
\end{equation}
In order to write 
\begin{equation*}
\begin{bmatrix}
(\mathbf{d}_{i}^{1})' \\
(\mathbf{d}_{i}^{2})' \\
\mathbf{d}_{e}' \\
(\mathbf{c}^{1})' \\
(\mathbf{c}^{2})' \\
\end{bmatrix}
=\mathbb{M}^{-1}\begin{bmatrix}
\mathbb{F}_{i}^{1} \\
\mathbb{F}_{i}^{2} \\
\mathbb{F}_{e} \\
\mathbb{H}^{1} \\
\mathbb{H}^{2} \\
\end{bmatrix},
\end{equation*}
one needs to prove that the matrix $\mathbb{M}$ is invertible. According to Lemma \ref{lem_Matrix}, given below, the matrix $\mathbb{M}$ is symmetric positive definite, hence invertible. Consequently, we can write the ODE system \eqref{ODE_sys} in the form $z'(t)=F(t,z(t)).$ Finally, we prove the existence of a local solution $[0,t_0)$ to this ODE system with $t_0\in (0,T)$ (independent of the initial data). To this end, we show that th entries of $\mathbb{F}_{i}^{k},\mathbb{F}_{e}$ and $\mathbb{H}^{k}$ for $k=1,2$ are Caratheodory functions bounded by $L^1$ functions using the assumptions \eqref{A_Mie_gap}-\eqref{A_vw0_gap} by following the same strategy in \cite{bend}.
\end{proof}

\begin{lem}
For all $n\in \mathbb{N}^{\ast},$ the matrix $\mathbb{M}$ is positive definite.
\label{lem_Matrix}
\end{lem}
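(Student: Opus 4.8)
The plan is to split $\mathbb{M}=\mathbb{M}_1+\e\,\mathbb{M}_2$ and to prove separately that $\mathbb{M}_1$ is symmetric positive definite and that $\mathbb{M}_2$ is symmetric positive semi-definite. Since $\e>0$, this gives $\langle\mathbb{M}x,x\rangle=\langle\mathbb{M}_1x,x\rangle+\e\,\langle\mathbb{M}_2x,x\rangle\geq\langle\mathbb{M}_1x,x\rangle>0$ for every $x\neq0$, while symmetry of $\mathbb{M}$ is inherited from that of $\mathbb{M}_1$ and $\mathbb{M}_2$; hence $\mathbb{M}$ is symmetric positive definite, in particular invertible, which is what is used in Lemma \ref{local_exist}.

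For $\mathbb{M}_1$, I would identify its diagonal blocks with Gram matrices in the inner products used to construct the Galerkin bases. Because $\psi_{i,\ell}^{k}=\phi_{i,\ell}^{k}\vert_{\Gamma_\e^{k}}$ and $\Psi_{i,\ell}^{k}=\phi_{i,\ell}^{k}\vert_{\Gamma_\e^{1,2}}$, the $(\ell,m)$ entry of $\ov{\mathbb{A}}_{ii}^{k}+\mathbb{A}_{ii}^{k}+\ovo{\mathbb{A}}_{ii}^{k}$ is exactly $\langle\Theta_{i,\ell}^{k},\Theta_{i,m}^{k}\rangle_{\V_{0,i}^{k}}$, which equals $\delta_{\ell m}$ by the normalization of the eigenbasis; similarly $\ov{\mathbb{A}}_{ee}^{1}+\ov{\mathbb{A}}_{ee}^{2}+\mathbb{A}_{ee}$ has entries $\langle\Theta_{e,\ell},\Theta_{e,m}\rangle_{\V_{0,e}}=\delta_{\ell m}$, and $\mathbb{G}^{k}$ has entries $\langle\zeta_{\ell}^{k},\zeta_{m}^{k}\rangle_{L^{2}(\Gamma_\e^{k})}=\delta_{\ell m}$. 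Thus $\mathbb{M}_1=\mathrm{diag}(\delta_n I_n,\delta_n I_n,\delta_n I_n,I_n,I_n)$ with $\delta_n=1/n>0$, which is symmetric positive definite. (Even without using orthonormality, each of these blocks is the Gram matrix of a linearly independent family and hence symmetric positive definite, so this step is robust.)

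For $\mathbb{M}_2$, the decisive point is that its quadratic form is a sum of integrals of squares over the three interfaces $\Gamma_\e^{1},\Gamma_\e^{2},\Gamma_\e^{1,2}$. Write $x=(a,b,c,p,q)\in\R^{5n}$ partitioned conformally with $\mathbb{M}$ as in \eqref{ODE_MDF_ie}; the blocks $p,q$ do not enter $\mathbb{M}_2$. Introduce the Galerkin traces $\ov{A}:=\sum_{\ell}a_{\ell}\psi_{i,\ell}^{1}$ and $\ov{C}^{1}:=\sum_{\ell}c_{\ell}\psi_{e,\ell}^{1}$ on $\Gamma_\e^{1}$, $\ov{B}:=\sum_{\ell}b_{\ell}\psi_{i,\ell}^{2}$ and $\ov{C}^{2}:=\sum_{\ell}c_{\ell}\psi_{e,\ell}^{2}$ on $\Gamma_\e^{2}$, and $\ovo{A}:=\sum_{\ell}a_{\ell}\Psi_{i,\ell}^{1}$, $\ovo{B}:=\sum_{\ell}b_{\ell}\Psi_{i,\ell}^{2}$ on $\Gamma_\e^{1,2}$. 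Expanding every block of $\mathbb{M}_2$ as the corresponding $L^{2}$-pairing of these functions — the diagonal blocks contribute $\int_{\Gamma_\e^{1}}\ov{A}^{2}$, $\tfrac{1}{2}\int_{\Gamma_\e^{1,2}}\ovo{A}^{2}$, and so on; the block $-\ov{\mathbb{A}}_{ie}^{k}$ contributes cross terms of the form $-2\int_{\Gamma_\e^{k}}\ov{A}\,\ov{C}^{k}$; the block $-\tfrac{1}{2}\ovo{\mathbb{A}}_{ii}^{1,2}$ contributes $-\int_{\Gamma_\e^{1,2}}\ovo{A}\,\ovo{B}$ — and completing the square on each interface yields
\begin{equation*}
\langle\mathbb{M}_2x,x\rangle=\int_{\Gamma_\e^{1}}(\ov{A}-\ov{C}^{1})^{2}\,d\sigma+\int_{\Gamma_\e^{2}}(\ov{B}-\ov{C}^{2})^{2}\,d\sigma+\frac{1}{2}\int_{\Gamma_\e^{1,2}}(\ovo{A}-\ovo{B})^{2}\,d\sigma\ \geq\ 0,
\end{equation*}
which at the discrete level is nothing but $\norm{v_n^{1}}_{L^{2}(\Gamma_\e^{1})}^{2}+\norm{v_n^{2}}_{L^{2}(\Gamma_\e^{2})}^{2}+\tfrac{1}{2}\norm{s_n}_{L^{2}(\Gamma_\e^{1,2})}^{2}$.

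The main obstacle I anticipate is purely the bookkeeping in this last step: matching each block of $\mathbb{M}_2$ with the correct interface and the correct trace functions, carrying the factors $\tfrac{1}{2}$ in front of $\ovo{\mathbb{A}}_{ii}^{k}$ and $\ovo{\mathbb{A}}_{ii}^{1,2}$ through the computation, and checking that the cross terms coming from $-\ov{\mathbb{A}}_{ie}^{1}$, $-\ov{\mathbb{A}}_{ie}^{2}$ and $-\tfrac{1}{2}\ovo{\mathbb{A}}_{ii}^{1,2}$ assemble exactly into the three perfect squares displayed above rather than into indefinite expressions; everything else is routine linear algebra.
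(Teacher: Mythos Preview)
Your proposal is correct and follows essentially the same approach as the paper: the split $\mathbb{M}=\mathbb{M}_1+\e\,\mathbb{M}_2$, the identification of $\mathbb{M}_1$ as positive definite via the orthonormality of the Galerkin bases, and the recognition of the quadratic form of $\mathbb{M}_2$ as a sum of $L^2$-norms of $v_n^1,v_n^2,s_n$ on the three interfaces. Your treatment of $\mathbb{M}_1$ is in fact slightly more careful than the paper's, since you correctly observe that it is the \emph{sum} $\mathbb{A}_{ii}^{k}+\ov{\mathbb{A}}_{ii}^{k}+\ovo{\mathbb{A}}_{ii}^{k}$ that equals $I_n$ by $\V_{0,i}^{k}$-orthonormality (giving $\delta_n I_n$ blocks), rather than each summand individually.
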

\begin{proof} Since we have $\mathbb{M}=\mathbb{M}_1+\e\mathbb{M}_2$ with $\mathbb{M}_1$ and $\mathbb{M}_2$ defined respectively by \eqref{Matrix_1}-\eqref{Matrix_2}. Note that by the orthonormality of the basis, the matrices $\ov{\mathbb{A}}_{ii}^{k},\mathbb{A}_{ii}^{k},\ovo{\mathbb{A}}_{ii}^{k},$ $\ov{\mathbb{A}}_{ee}^{k},$ $\mathbb{A}_{ee}$ and $\mathbb{G}^{k}$ are equal to the identity matrix $\mathbb{I}_{n\times n}$ for $k=1,2.$ So, the matrix
\begin{equation*}
\mathbb{M}_1=
\begin{bmatrix}
3\delta_n\mathbb{I}_{n\times n} & 0 & 0 & 0 & 0 \\
0  & 3\delta_n\mathbb{I}_{n\times n} & 0 & 0  \\
0 & 0 & 3\delta_n\mathbb{I}_{n\times n} & 0 & 0 \\
0 & 0 & 0 & \mathbb{I}_{n\times n} & 0 \\
0 & 0 & 0 & 0 & \mathbb{I}_{n\times n} \\
\end{bmatrix}
\end{equation*}
It suffices to show that the matrix $\mathbb{M}_2$ is positive semi-definite. Let $\mathbf{d}={}^{t}\left( \mathbf{d}_{i}^{1} \ \ \mathbf{d}_{i}^{2} \ \ \mathbf{d}_{e} \ \ \mathbf{c}^{1} \ \ \mathbf{c}^{2} \right)$ where $\mathbf{d}_{i}^{k}={}^{t}\left(\mathbf{d}_{i,1}^{k},\dots,\mathbf{d}_{i,n}^{k}\right)\in \R^{n},$  $\mathbf{d}_{e}={}^{t}\left(\mathbf{d}_{e,1},\dots,\mathbf{d}_{e,n}\right)\in \R^{n}$ and $\mathbf{c}^{k}={}^{t}\left(\mathbf{c}_{1}^{k},\dots,\mathbf{c}_{n}^{k}\right)\in \R^{n}$ for $k=1,2,$ we prove that ${}^{t}\mathbf{d}\mathbb{M}_2 \mathbf{d}\geq 0.$\\
Indeed, we have:
\begin{align*}
{}^{t}\mathbf{d}\mathbb{M}_2 \mathbf{d} &={}^{t}\mathbf{d}_{i}^{1}\left( \ov{\mathbb{A}}_{ii}^{1}+\frac{1}{2}\ovo{\mathbb{A}}_{ii}^{1}\right) \mathbf{d}_{i}^{1}+{}^{t}\mathbf{d}_{i}^{2}\left(\ov{\mathbb{A}}_{ii}^{2}+\frac{1}{2}\ovo{\mathbb{A}}_{ii}^{2}\right)\mathbf{d}_{i}^{2}+ {\,}^{t}\mathbf{d}_{e}
\left(\ov{\mathbb{A}}_{ee}^{1}+\ov{\mathbb{A}}_{ee}^{2}\right) \mathbf{d}_{e} 
\\& \quad -{\, }^{t}\mathbf{d}_{i}^{1}\ovo{\mathbb{A}}_{ii}^{1,2} \mathbf{d}_{i}^{2}-2{\, }^{t}\mathbf{d}_{i}^{1}\ov{\mathbb{A}}_{ie}^{1} \mathbf{d}_{e}-2{\, }^{t}\mathbf{d}_{i}^{2}\ov{\mathbb{A}}_{ie}^{2} \mathbf{d}_{e}
\\ &={}^{t}\mathbf{d}_{i}^{1}\ov{\mathbb{A}}_{ii}^{1}\mathbf{d}_{i}^{1}-2{\, }^{t}\mathbf{d}_{i}^{1}\ov{\mathbb{A}}_{ie}^{1} \mathbf{d}_{e}+ {\,}^{t}\mathbf{d}_{e}\ov{\mathbb{A}}_{ee}^{1} \mathbf{d}_{e}
\\& \quad +{}^{t}\mathbf{d}_{i}^{2}\ov{\mathbb{A}}_{ii}^{2}\mathbf{d}_{i}^{2}-2{\, }^{t}\mathbf{d}_{i}^{2}\ov{\mathbb{A}}_{ie}^{2} \mathbf{d}_{e}+ {\,}^{t}\mathbf{d}_{e}\ov{\mathbb{A}}_{ee}^{2} \mathbf{d}_{e}
\\& \quad +\frac{1}{2}{}^{t}\mathbf{d}_{i}^{1}\ovo{\mathbb{A}}_{ii}^{1}\mathbf{d}_{i}^{1}-{\, }^{t}\mathbf{d}_{i}^{1}\ovo{\mathbb{A}}_{ii}^{1,2} \mathbf{d}_{i}^{2}+\frac{1}{2}{}^{t}\mathbf{d}_{i}^{2}\ovo{\mathbb{A}}_{ii}^{2}\mathbf{d}_{i}^{2}
\\& :=\mathbb{E}_1+\mathbb{E}_2+\mathbb{E}_3
\end{align*}
We complete by showing that $\mathbb{E}_1\geq 0$ and the proof of the other terms $\mathbb{E}_2, \mathbb{E}_3$ is similar. Due the form of matrices and the orthonormality  of basis, we obtain:
\begin{align*}
\mathbb{E}_1 &={}^{t}\mathbf{d}_{i}^{1}\ov{\mathbb{A}}_{ii}^{1}\mathbf{d}_{i}^{1}-2{\, }^{t}\mathbf{d}_{i}^{1}\ov{\mathbb{A}}_{ie}^{1} \mathbf{d}_{e}+{\,}^{t}\mathbf{d}_{e}\ov{\mathbb{A}}_{ee}^{1} \mathbf{d}_{e}
\\& = \sum \limits_{\ell,m=1}^{n}\left[\mathbf{d}_{i,\ell}^{1}\mathbf{d}_{i,m}^{1}\displaystyle\int_{\Gamma_{\e}^{1}}\psi_{i,\ell}^{1}\psi_{i,m}^{1}-2\mathbf{d}_{i,\ell}^{1}\mathbf{d}_{e,m}\displaystyle\int_{\Gamma_{\e}^{1}}\psi_{i,\ell}^{1}\psi_{e,m}^{1}+\mathbf{d}_{e,\ell}\mathbf{d}_{e,m}\displaystyle\int_{\Gamma_{\e}^{1}}\psi_{e,\ell}^{1}\psi_{e,m}^{1}\right]\ d\sigma_x
\\& =\displaystyle\int_{\Gamma_{\e}^{1}} \left[\sum \limits_{\ell}^{n}\mathbf{d}_{i,\ell}^{1}\psi_{i,\ell}^{1}-\mathbf{d}_{e,\ell}\psi_{e,\ell}^{1}\right]^{2} \ d\sigma_x \ \geq 0.
\end{align*}
\end{proof}

\begin{rem} The above proof of the matrix $\mathbb{M}$ points out the role of the regularization term $\mathbb{M}_1$. It allows to obtain a matrix $\mathbb{M}$ in \eqref{ODE_MDF_ie} which is nonsingular, so that the resulting system of ODE is non-degenerate.
\end{rem}

 To prove global existence of the Faedo-Galerkin solutions on $[0,T),$  we derive a priori estimates, independent of the regularization parameter $n,$ bounding $u_{i,n}^{k},u_{e,n},v_{n}^{k},w_{n}^{k}$ for $k=1,2$ and $s_{n}$ in the next step.
\subsection*{Step 3: Energy estimates} The Faedo-Galerkin solutions satisfy the following weak formulations:
\begin{equation}
\begin{aligned}
&(\e+\delta_{n})\int_{\Gamma_{\e}^{1}} \pt_t \ov{u}_{i,n}^{1} \psi_{i,n}^{1} \ d\sigma_x-\e\int_{\Gamma_{\e}^{1}} \pt_t \ov{u}_{e,n} \psi_{i,n}^{1} \ d\sigma_x +\delta_{n}\int_{\Omega_{i,\e}^{1}} \pt_t u_{i,n}^{1} \varphi_{i,n}^{1} \ dx
\\&\quad +(\frac{\e}{2}+\delta_{n})\int_{\Gamma_{\e}^{1,2}} \pt_t \ovo{u}_{i,n}^{1} \Psi_{i,n}^{1} \ d\sigma_x-\frac{\e}{2}\int_{\Gamma_{\e}^{1,2}} \pt_t \ovo{u}_{i,n}^{2} \Psi_{i,n}^{1} \ d\sigma_x 
\\& =\int_{\Gamma_{\e}^{1}} \e\left(-\I_{ion}(v_{n}^{1},w_{n}^{1})+\I_{app,\e}^{1}\right)\psi_{i,n}^{1} \ d\sigma_x
\\&\quad -\frac{1}{2}\int_{\Gamma_{\e}^{1,2}} \e \I_{gap}(s_{n})\Psi_{i,n}^{1} \ d\sigma_x-\int_{\Omega_{i,\e}^{1}}\mathrm{M}_{i}^{\e}\nabla u_{i,n}^{1}\cdot\nabla\varphi_{i,n}^{1} \ dx
\end{aligned}
\label{Fv_i1n_n_gap}
\end{equation}
\begin{equation}
\begin{aligned}
&(\e+\delta_{n})\int_{\Gamma_{\e}^{2}} \pt_t \ov{u}_{i,n}^{2} \psi_{i,n}^{2} \ d\sigma_x-\e\int_{\Gamma_{\e}^{2}} \pt_t \ov{u}_{e,n} \psi_{i,n}^{2} \ d\sigma_x +\delta_{n}\int_{\Omega_{i,\e}^{2}} \pt_t u_{i,n}^{2} \varphi_{i,n}^{2} \ dx
\\&\quad -\frac{\e}{2}\int_{\Gamma_{\e}^{1,2}} \pt_t \ovo{u}_{i,n}^{1} \Psi_{i,n}^{2} \ d\sigma_x+(\frac{\e}{2}+\delta_{n})\int_{\Gamma_{\e}^{1,2}} \pt_t \ovo{u}_{i,n}^{2} \Psi_{i,n}^{2} \ d\sigma_x 
\\& =\int_{\Gamma_{\e}^{2}} \e\left(-\I_{ion}(v_{n}^{2},w_{n}^{2})+\I_{app,\e}^{2}\right)\psi_{i,n}^{2} \ d\sigma_x
\\&\quad +\frac{1}{2}\int_{\Gamma_{\e}^{1,2}} \e \I_{gap}(s_{n})\Psi_{i,n}^{2} \ d\sigma_x-\int_{\Omega_{i,\e}^{2}}\mathrm{M}_{i}^{\e}\nabla u_{i,n}^{2}\cdot\nabla\varphi_{i,n}^{2} \ dx
\end{aligned}
\label{Fv_i2n_n_gap}
\end{equation}
\begin{equation}
\begin{aligned}
&-\e\sum \limits_{k=1,2}\int_{\Gamma_{\e}^{k}} \pt_t \ov{u}_{i,n}^{k} \psi_{e,n}^{k} \ d\sigma_x+(\e+\delta_{n})\sum \limits_{k=1,2}\int_{\Gamma_{\e}^{k}} \pt_t \ov{u}_{e,n}^{k} \psi_{e,n}^{k} \ d\sigma_x +\delta_{n}\int_{\Omega_{e,\e}} \pt_t u_{e,n} \varphi_{e,n} \ dx 
\\& =\sum \limits_{k=1,2}\int_{\Gamma_{\e}^{k}} \e\left(\I_{ion}(v_{n}^{k},w_{n}^{k})-\I_{app,\e}^{k}\right)\psi_{e,n}^{k} \ d\sigma_x-\int_{\Omega_{e,\e}}\mathrm{M}_{e}^{\e}\nabla u_{e,n}\cdot\nabla\varphi_{e,n} \ dx
\end{aligned}
\label{Fv_en_n_gap}
\end{equation}
\begin{equation}
\int_{\Gamma_{\e}^{k}} \pt_t w_{n}^{k}e_{n}^{k} \ d\sigma_x=\int_{\Gamma_{\e}^{k}} H\left(v_{n}^{k},w_{n}^{k}\right) e_{n}^{k} \ d\sigma_x,
\label{Fv_dn_n_gap}
\end{equation}
where
\begin{align*}
\varphi_{i,n}^{k}(t,x):=\sum \limits_{\ell=1}^{n} a_{i,\ell}^{k}(t)\phi_{i,\ell}^{k}(x), \ \varphi_{e,n}(t,x):=\sum \limits_{\ell=1}^{n}a_{e,\ell}(t)\phi_{e,\ell}(x), \ e_{n}^{k}(t,x):=\sum \limits_{\ell=1}^{n}b_{\ell}(t)\xi_{\ell}^{k}(x),
\end{align*} 
for some given (absolutely continuous) coefficients $a_{i,\ell}^{k}(t),a_{e,\ell}(t),b_{\ell}^{k}(t)$ with $\ell=1,\dots,n$ and $k=1,2.$ Moreover, we recall that $\psi_{i,n}^{k}$ (resp. $\psi_{e,n}^{k}$) is the trace of $\varphi_{i,n}^{k}$ (resp. of $\varphi_{e,n}$) on $\Gamma_{\e}^{k}$ and $\Psi_{i,n}^{k}$ is the trace of $\varphi_{i,n}^{k}$ on $\Gamma_{\e}^{1,2}$ for $k=1,2.$ 

 We find now the a priori estimates of the solution of approximate
problem \eqref{Fv_i1n_n_gap}-\eqref{Fv_dn_n_gap}. First,  we  sum the three equations \eqref{Fv_i1n_n_gap}-\eqref{Fv_en_n_gap} to obtain the following weak formulation:
\begin{equation}
\begin{aligned}
&\sum \limits_{k=1,2} \int_{\Gamma_{\e}^{k}}\e \pt_t v_{n}^{k} \psi_{n}^{k} \ d\sigma_x+ \sum \limits_{k=1,2}\int_{\Gamma_{\e}^{k}} \delta_{n}\pt_t \ov{u}_{i,n}^{k} \psi_{i,n}^{k} \ d\sigma_x+ \sum \limits_{k=1,2}\int_{\Gamma_{\e}^{k}} \delta_{n}\pt_t \ov{u}_{e,n}^{k} \psi_{e,n}^{k} \ d\sigma_x 
\\&\quad +\frac{1}{2}\int_{\Gamma_{\e}^{1,2}}\e \pt_t s_{n} \Psi_{n} \ d\sigma_x+\sum \limits_{k=1,2}\int_{\Gamma_{\e}^{1,2}}\delta_{n} \pt_t \ovo{u}_{i,n}^{k} \Psi_{i,n}^{k} \ d\sigma_x
\\& \quad +\sum \limits_{k=1,2}\int_{\Omega_{i,\e}^{k}} \delta_{n}\pt_t u_{i,n}^{k} \varphi_{i,n}^{k} \ dx+\int_{\Omega_{e,\e}} \delta_{n}\pt_t u_{e,n} \varphi_{e,n} \ dx
\\& \quad +\sum \limits_{k=1,2}\int_{\Omega_{i,\e}^{k}}\mathrm{M}_{i}^{\e}\nabla u_{i,n}^{k}\cdot\nabla\varphi_{i,n}^{k} \ dx+\int_{\Omega_{e,\e}}\mathrm{M}_{e}^{\e}\nabla u_{e,n}\cdot\nabla\varphi_{e,n} \ dx
\\&\quad +\sum \limits_{k=1,2}\int_{\Gamma_{\e}^{k}} \e\I_{ion}\left(v_{n}^{k},w_{n}^{k}\right)\psi_{n}^{k} \ d\sigma_x+\frac{1}{2}\int_{\Gamma_{\e}^{1,2}} \e\I_{gap}\left(s_{n}\right)\Psi_{n} \ d\sigma_x
\\&=\sum \limits_{k=1,2}\int_{\Gamma_{\e}^{k}} \e\I_{app,\e}^{k}\psi_{n}^{k} \ d\sigma_x,
\end{aligned}
\label{Fv_trid}
\end{equation}
\begin{equation}
\int_{\Gamma_{\e}^{k}} \pt_t w_{n}^{k}e_{n}^{k} \ d\sigma_x=\int_{\Gamma_{\e}^{k}} H\left(v_{n}^{k},w_{n}^{k}\right) e_{n}^{k} \ d\sigma_x,
\label{Fv_dyn_trid}
\end{equation}
where $\psi_{n}^{k}=\psi_{i,n}^{k}-\psi_{e,n}^{k}$ for $k=1,2$ and $\Psi_{n}=\Psi_{i,n}^{1}-\Psi_{i,n}^{2}.$

 Next, we substitute $\varphi_{i,n}^{k}=u_{i,n}^{k},$ $\varphi_{e,n}=u_{e,n}$ and $e_{n}^{k}=\e \alpha_4 w_{n}^{k},$ respectively, in \eqref{Fv_trid}-\eqref{Fv_dyn_trid} to get the following equality:
\begin{equation}
\begin{aligned}
& \dfrac{1}{2}\dfrac{d}{dt} \Bigg[\sum\limits_{k=1,2}  \int_{\Gamma_{\e}^{k}} \card{\sqrt{\e}v_{n}^{k}}^2  \ d\sigma_x+ \sum \limits_{k=1,2}\int_{\Gamma_{\e}^{k}} \card{\sqrt{\delta_{n}}\ov{u}_{i,n}^{k}}^2 \ d\sigma_x+ \sum \limits_{k=1,2}\int_{\Gamma_{\e}^{k}} \card{\sqrt{\delta_{n}}\ov{u}_{e,n}^{k}}^2 \ d\sigma_x  
\\& \quad +\frac{1}{2}\int_{\Gamma_{\e}^{1,2}} \card{\sqrt{\e} s_{n}}^2 \ d\sigma_x+\sum \limits_{k=1,2}\int_{\Gamma_{\e}^{1,2}} \card{\sqrt{\delta_{n}}\ovo{u}_{i,n}^{k}}^2 \ d\sigma_x 
\\& \quad +\sum \limits_{k=1,2}\int_{\Omega_{i,\e}^{k}} \card{\sqrt{\delta_{n}}u_{i,n}^{k}}^2 \ dx+\int_{\Omega_{e,\e}} \card{\sqrt{\delta_{n}} u_{e,n}}^2  \ dx\Bigg]
\\& \quad +\sum \limits_{k=1,2}\int_{\Omega_{i,\e}^{k}}\mathrm{M}_{i}^{\e}\nabla u_{i,n}^{k}\cdot\nabla u_{i,n}^{k} \ dx+\int_{\Omega_{e,\e}}\mathrm{M}_{e}^{\e}\nabla u_{e,n}\cdot\nabla u_{e,n} \ dx
\\& \quad +\sum \limits_{k=1,2}\int_{\Gamma_{\e}^{k}} \e\I_{ion}\left(v_{n}^{k},w_{n}^{k}\right)v_{n}^{k} \ d\sigma_x+\frac{1}{2}\int_{\Gamma_{\e}^{1,2}} \e\I_{gap}\left(s_{n}\right)s_{n} \ d\sigma_x
\\&=\sum \limits_{k=1,2}\int_{\Gamma_{\e}^{k}} \e\I_{app,\e}^{k}v_{n}^{k} \ d\sigma_x,
\end{aligned}
\label{sub_u}
\end{equation}
\begin{align}
\dfrac{\alpha_4}{2}\dfrac{d}{dt}\int_{\Gamma_{\e}^{k}}  \card{\sqrt{\e}w_{n}^{k}}^2  \ d\sigma_x=\int_{\Gamma_{\e}^{k}} \e\alpha_4 H\left(v_{n}^{k},w_{n}^{k}\right) w_{n}^{k} \ d\sigma_x, \text{ for } k=1,2.
\label{sub_w}
\end{align}

 Integrating \eqref{sub_u}-\eqref{sub_w}  over $(0,t)$ for $t \in (0,t_0]$ in each equation and then summing the resulting equations, we procure the following equality using the assumption \eqref{A_H_I_gap} on $\I_{ion}$:
 \begin{equation}
 \begin{aligned}
& \dfrac{1}{2} \Bigg[\sum\limits_{k=1,2} \norm{\sqrt{\e}v_{n}^{k}}_{L^2(\Gamma_{\e}^{k})}^2+\alpha_4\sum\limits_{k=1,2} \norm{\sqrt{\e}w_{n}^{k}}_{L^2(\Gamma_{\e}^{k})}^2+\frac{1}{2}\norm{\sqrt{\e} s_{n}}_{L^2(\Gamma_{\e}^{1,2})}^2
\\&\quad + \sum \limits_{k=1,2}\norm{\sqrt{\delta_{n}}\ov{u}_{i,n}^{k}}_{L^2(\Gamma_{\e}^{k})}^2+ \sum \limits_{k=1,2}\norm{\sqrt{\delta_{n}}\ov{u}_{e,n}^{k}}_{L^2(\Gamma_{\e}^{k})}^2+\sum \limits_{k=1,2}\norm{\sqrt{\delta_{n}}\ovo{u}_{i,n}^{k}}_{L^2(\Gamma_{\e}^{1,2})}^2 
\\&\quad +\sum \limits_{k=1,2}\norm{\sqrt{\delta_{n}}u_{i,n}^{k}}_{L^2(\Omega_{i,\e}^{k})}^2+\norm{\sqrt{\delta_{n}} u_{e,n}}_{L^2(\Omega_{e,\e})}^2\Bigg]
\\& \quad +\sum \limits_{k=1,2}\int_{0}^{t}\int_{\Omega_{i,\e}^{k}}\mathrm{M}_{i}^{\e}\nabla u_{i,n}^{k}\cdot\nabla u_{i,n}^{k} \ dx d\tau+\int_{0}^{t}\int_{\Omega_{e,\e}}\mathrm{M}_{e}^{\e}\nabla u_{e,n}\cdot\nabla u_{e,n} \ dx d\tau
\\&\quad +\sum \limits_{k=1,2}\int_{0}^{t}\int_{\Gamma_{\e}^{k}} \e \tilde{\mathrm{I}}_{a,ion}\left(v_{n}^{k} \right)v_{n}^{k} \ d\sigma_xd\tau
\\&=\dfrac{1}{2} \Bigg[\sum\limits_{k=1,2} \norm{\sqrt{\e}v_{0,n}^{k}}_{L^2(\Gamma_{\e}^{k})}^2+\alpha_4\sum\limits_{k=1,2} \norm{\sqrt{\e}w_{0,n}^{k}}_{L^2(\Gamma_{\e}^{k})}^2+\frac{1}{2}\norm{\sqrt{\e} s_{0,n}}_{L^2(\Gamma_{\e}^{1,2})}^2
\\& \quad + \sum \limits_{k=1,2}\norm{\sqrt{\delta_{n}}\ov{u}_{0,i,n}^{k}}_{L^2(\Gamma_{\e}^{k})}^2+ \sum \limits_{k=1,2}\norm{\sqrt{\delta_{n}}\ov{u}_{0,e,n}^{k}}_{L^2(\Gamma_{\e}^{k})}^2+\sum \limits_{k=1,2}\norm{\sqrt{\delta_{n}}\ovo{u}_{0,i,n}^{k}}_{L^2(\Gamma_{\e}^{1,2})}^2 
\\& \quad +\sum \limits_{k=1,2}\norm{\sqrt{\delta_{n}}u_{0,i,n}^{k}}_{L^2(\Omega_{i,\e}^{k})}^2+\norm{\sqrt{\delta_{n}} u_{0,e,n}}_{L^2(\Omega_{e,\e})}^2\Bigg]
\\& \quad -\frac{1}{2}\int_{0}^{t}\int_{\Gamma_{\e}^{1,2}} \e\I_{gap}\left(s_{n}\right)s_{n} \ d\sigma_xd\tau+\sum \limits_{k=1,2}\int_{0}^{t}\int_{\Gamma_{\e}^{k}} \e\left( -\mathrm{I}_{b,ion}\left(w_{n}^{k}\right)v_{n}^{k}+\alpha_4 H\left(v_{n}^{k},w_{n}^{k}\right) w_{n}^{k} \right) \ d\sigma_xd\tau
\\&\quad +\sum \limits_{k=1,2}\int_{0}^{t}\int_{\Gamma_{\e}^{k}} \e \left(\beta_1 v_{n}^{k}+\beta_2 \right)v_{n}^{k} \ d\sigma_xd\tau+\sum \limits_{k=1,2}\int_{0}^{t}\int_{\Gamma_{\e}^{k}} \e\I_{app,\e}^{k}v_{n}^{k} \ d\sigma_xd\tau.
\end{aligned}
\label{sub_uw}
\end{equation}
We denote by $E_\ell$ with $\ell = 1,\dots, 9$ the terms of the previous equation which is rewritten as follows (to respect the order):
 \begin{equation*}
E_1+E_2+E_3+E_4=E_5+E_6+E_7+E_8+E_9.
 \end{equation*} 
Now, we estimate $E_\ell$ for $\ell=2,\dots,9$ as follows:
\begin{itemize}
\item Due the uniform ellipticity \eqref{A_Mie_gap} of $\mathrm{M}_{j}^{\e}$ for $j=i,e,$ we have
\begin{align*}
E_2+E_3 \geq \alpha \left(\sum \limits_{k=1,2}\int_{0}^{t}\norm{\nabla u_{i,n}^{k}}_{L^2(\Omega_{i,\e}^{k})}^2 \ d\tau+\int_{0}^{t}\norm{\nabla u_{e,n}}_{L^2(\Omega_{e,\e})}^2 \ d\tau \right)\geq 0.
\end{align*}
\item Using the assumption \eqref{A_tildeI_a_2} on $\tilde{\mathrm{I}}_{a,ion},$ we deduce that $E_4\geq 0.$
\item By the assumptions \eqref{A_vw0_gap} on the initial data, we have 
$E_5\leq C$ for some constant independent of $n$ and $\e.$
\item By the structure form of $\I_{gap}$ defined  in \eqref{gap_model}, we obtain
\begin{align*}
E_6\leq G_{gap} \int_{0}^{t} \norm{\sqrt{\e}s_{n}}_{L^2(\Gamma_{\e}^{1,2})}^2 \ d\tau.
\end{align*}
\item Using the assumption  on $\mathrm{I}_{b,ion}$ and $H$ defined as \eqref{A_H_Ib_a}, then we obtain
\begin{align*}
E_7\leq \alpha_5 \sum\limits_{k=1,2}\int_{0}^{t} \norm{\sqrt{\e}w_{n}^{k}}_{L^2(\Gamma_{\e}^{k})}^2 \ d\tau.
\end{align*}
\item It easy to estimate $E_8$ as follows
\begin{align*}
E_8\leq C \sum\limits_{k=1,2}\int_{0}^{t} \norm{\sqrt{\e}v_{n}^{k}}_{L^2(\Gamma_{\e}^{k})}^2 \ d\tau,
\end{align*}
with $C$ is constant independent of $n$ and $\e.$
\item By Young's inequality with the uniform $L^2$ boundedness \eqref{A_iapp_gap} of $\I_{app,\e}^{k}$, there exist constants $C_1,C_2>0$ independent of $n$ and $\e$ such that
\begin{align*}
E_9\leq C_1+C_2 \sum\limits_{k=1,2}\int_{0}^{t} \norm{\sqrt{\e}v_{n}^{k}}_{L^2(\Gamma_{\e}^{k})}^2 \ d\tau.
\end{align*}
\end{itemize} 

 Collecting all the estimates stated above, one obtains from \eqref{sub_uw} the following inequality for all $t\leq t_0,$
 \begin{equation}
\begin{aligned}
&\sum\limits_{k=1,2} \norm{\sqrt{\e}v_{n}^{k}}_{L^2(\Gamma_{\e}^{k})}^2+\sum\limits_{k=1,2} \norm{\sqrt{\e}w_{n}^{k}}_{L^2(\Gamma_{\e}^{k})}^2+\norm{\sqrt{\e} s_{n}}_{L^2(\Gamma_{\e}^{1,2})}^2
\\& \leq C\left(1+ \sum\limits_{k=1,2}\int_{0}^{t_0} \norm{\sqrt{\e}v_{n}^{k}}_{L^2(\Gamma_{\e}^{k})}^2 \ d\tau+\sum\limits_{k=1,2}\int_{0}^{t_0} \norm{\sqrt{\e}w_{n}^{k}}_{L^2(\Gamma_{\e}^{k})}^2 \ d\tau+ \int_{0}^{t_0} \norm{\sqrt{\e}s_{n}}_{L^2(\Gamma_{\e}^{1,2})}^2 \ d\tau\right).
\end{aligned}
\label{E_t_T}
 \end{equation}
By an application of Gronwall’s lemma in the last inequality, one gets
\begin{align*}
\sum\limits_{k=1,2} \norm{\sqrt{\e}v_{n}^{k}}_{L^2(\Gamma_{\e}^{k})}^2+\sum\limits_{k=1,2} \norm{\sqrt{\e}w_{n}^{k}}_{L^2(\Gamma_{\e}^{k})}^2+\norm{\sqrt{\e} s_{n}}_{L^2(\Gamma_{\e}^{1,2})}^2 \leq C.
\end{align*}
Hence, we conclude that
$$\sum\limits_{k=1,2} \norm{\sqrt{\e}v_{n}^{k}}_{L^{\infty}\left(0,T; L^2(\Gamma_{\e}^{k})\right)}^2+\sum\limits_{k=1,2} \norm{\sqrt{\e}w_{n}^{k}}_{L^{\infty}\left(0,T;L^2(\Gamma_{\e}^{k})\right)}^2+\norm{\sqrt{\e} s_{n}}_{L^{\infty}\left(0,T;L^2(\Gamma_{\e}^{1,2})\right)}^2 \leq C.$$ 
Then, we can deduce from this inequality that our approximate weak solution of the microscopic tridomain problem is global on $(0,T).$
 
 Moreover, one can obtain by exploiting this last inequality along with \eqref{sub_uw} the following a priori estimates for some constant $C>0$ not depending on $n$ and $\e$:
 \begin{equation}
 \begin{aligned}
&\sum\limits_{k=1,2} \norm{\sqrt{\e}v_{n}^{k}}_{L^{\infty}\left(0,T;L^2(\Gamma_{\e}^{k})\right)}^2+\sum\limits_{k=1,2} \norm{\sqrt{\e}w_{n}^{k}}_{L^{\infty}\left(0,T;L^2(\Gamma_{\e}^{k})\right)}^2+\norm{\sqrt{\e} s_{n}}_{L^{\infty}\left(0,T;L^2(\Gamma_{\e}^{1,2})\right)}^2
\\& \qquad + \sum \limits_{k=1,2}\norm{\sqrt{\delta_{n}}\ov{u}_{i,n}^{k}}_{L^{\infty}\left(0,T;L^2(\Gamma_{\e}^{1,2})\right)}^2+ \sum \limits_{k=1,2}\norm{\sqrt{\delta_{n}}\ov{u}_{e,n}^{k}}_{L^{\infty}\left(0,T;L^2(\Gamma_{\e}^{k})\right)}^2
\\&\qquad\qquad+\sum \limits_{k=1,2}\norm{\sqrt{\delta_{n}}\ovo{u}_{i,n}^{k}}_{L^{\infty}\left(0,T;L^2(\Gamma_{\e}^{k})\right)}^2\leq C,
 \end{aligned}
 \label{E_vwsinf}
  \end{equation}
  \begin{equation}
 \begin{aligned}
\sum \limits_{k=1,2}\norm{\nabla u_{i,n}^{k}}_{L^2(\Omega_{i,\e,T}^{k})}^2+\norm{\nabla u_{e,n}}_{L^2(\Omega_{e,\e,T})}^2 \leq C,
\end{aligned}
\label{E_uie}
  \end{equation}
  \begin{equation}
 \begin{aligned}
 \sum \limits_{k=1,2}\norm{\e \tilde{\mathrm{I}}_{a,ion}\left(v_{n}^{k} \right)v_{n}^{k}}_{L^1(\Gamma_{\e,T}^{k})} \leq C.
\end{aligned}
\label{E_tildIa}
  \end{equation}
    \begin{equation}
 \begin{aligned}
 \sum\limits_{k=1,2} \norm{\sqrt{\e}v_{n}^{k}}_{L^2(\Gamma_{\e,T}^{k})}^2+\sum\limits_{k=1,2} \norm{\sqrt{\e}w_{n}^{k}}_{L^2(\Gamma_{\e,T}^{k})}^2+\norm{\sqrt{\e} s_{n}}_{L^2(\Gamma_{\e,T}^{1,2})}^2\leq C,
 \end{aligned}
  \label{E_vwsL2}
  \end{equation}
 for some constant $C>0$ not depending on $n$ and $\e.$ 
 
  Furthermore, we deduce from \eqref{E_tildIa} together with assumption \eqref{A_tildeI_a_2} on $\tilde{\mathrm{I}}_{a,ion}$ the following estimation:
\begin{equation}
 \begin{aligned}
\sum\limits_{k=1,2} \norm{\e^{1/r}v_{n}^{k}}_{L^r(\Gamma_{\e,T}^{k})}^r\leq C, 
 \end{aligned}
 \label{E_vn_r}
  \end{equation}
 for some constant $C>0$ not depending on $n$ and $\e.$ The second estimate \eqref{E_vr_gap} in Theorem \ref{thm_micro_gap} is a direct consequence of \eqref{E_vn_r} and assumption \eqref{A_I_ab} on $\mathrm{I}_{a,ion}.$

  It remains to estimate on the $L^2$ norms of the intracellular and extracellular potentials which are need to complete the proof of Estimate \eqref{E_u_gap} on $H^1$. To do this end, we will use the next lemma, which is a consequence of the uniform Poincaré-Wirtinger's inequality and the trace theorem for $\e$-periodic surfaces.
\begin{lem} Let $u_{i}^{k} \in H^{1}\left( \Omega_{i,\e}^{k}\right)$ for $k=1,2$ and $u_{e} \in H^{1}\left( \Omega_{e,\e}\right).$ Set $v^{k}:=\left(u_{i}^{k}-u_{e}\right)\vert_{\Gamma_{\e}^{k}}$ for $k=1,2$.  Assume that the condition \eqref{normalization} holds, then there exists a positive constants $C$, independent of $\e$, such that  
\begin{equation}
\norm{u_{i}^{k}}_{L^2(\Omega_{i,\e}^{k})}^2\leq C\left(\norm{\sqrt{\e} v^{k}}_{L^2(\Gamma_{\e}^{k})}^2+\norm{\nabla u_{i}^{k}}_{L^2(\Omega_{i,\e}^{k})}^2+\norm{\nabla u_{e}}_{L^2(\Omega_{e,\e})}^2 \right), \text{ with } k=1,2.
\label{E_uik_vuike}
\end{equation}
\label{Poincare_trace_uik}
\end{lem}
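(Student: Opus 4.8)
The plan is to deduce \eqref{E_uik_vuike} from two standard uniform-in-$\e$ tools of homogenization in perforated domains — the Poincaré--Wirtinger inequality on the connected Lipschitz subdomains $\Omega_{i,\e}^{k}$ and $\Omega_{e,\e}$, and the trace inequality on the $\e$-periodic surface $\Gamma_{\e}^{k}$ in its $\e$-scaled form — together with the elementary bounds $\abs{\Gamma_{\e}^{k}}\simeq\e^{-1}$ and $\abs{\Omega_{i,\e}^{k}}+\abs{\Omega_{e,\e}}\leq C$, all uniform in $\e$. First I would record the scaled trace inequality: there is $C>0$ independent of $\e$ such that for $O_\e\in\{\Omega_{i,\e}^{k},\Omega_{e,\e}\}$, $\Gamma\subset\Gamma_{\e}^{k}$ the adjacent face, and all $w\in H^{1}(O_\e)$,
\[
\norm{w}_{L^{2}(\Gamma)}^{2}\leq C\Big(\tfrac{1}{\e}\norm{w}_{L^{2}(O_\e)}^{2}+\e\,\norm{\nabla w}_{L^{2}(O_\e)}^{2}\Big).
\]
This is obtained by taking the interpolated trace inequality $\norm{\cdot}_{L^{2}(\Gamma^{k})}^{2}\lesssim\delta^{-1}\norm{\cdot}_{L^{2}(Y_{i}^{k})}^{2}+\delta\norm{\nabla\cdot}_{L^{2}(Y_{i}^{k})}^{2}$ on the reference cell, rescaling cell-by-cell via $x=\e y$ (so $d\sigma_{x}=\e^{d-1}d\sigma_{y}$, $dx=\e^{d}dy$, $\nabla_{x}=\e^{-1}\nabla_{y}$), fixing $\delta$ to a small constant, and summing over the $O(\e^{-d})$ translated copies; the finitely many cells truncated by $\pt_{\text{ext}}\Omega$ are absorbed using the uniform Lipschitz character of the domains.

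Next I would treat the two potentials separately. Because of the normalization \eqref{normalization}, $u_{e}$ has zero mean over $\Omega_{e,\e}$, so uniform Poincaré--Wirtinger gives $\norm{u_{e}}_{L^{2}(\Omega_{e,\e})}\leq C\norm{\nabla u_{e}}_{L^{2}(\Omega_{e,\e})}$; inserting this into the scaled trace inequality yields $\e\norm{u_{e}}_{L^{2}(\Gamma_{\e}^{k})}^{2}\leq C\norm{\nabla u_{e}}_{L^{2}(\Omega_{e,\e})}^{2}$. For $u_{i}^{k}$, set $M_{i}:=\abs{\Omega_{i,\e}^{k}}^{-1}\int_{\Omega_{i,\e}^{k}}u_{i}^{k}\,dx$ and decompose $\norm{u_{i}^{k}}_{L^{2}(\Omega_{i,\e}^{k})}^{2}\leq 2\norm{u_{i}^{k}-M_{i}}_{L^{2}(\Omega_{i,\e}^{k})}^{2}+2\abs{M_{i}}^{2}\abs{\Omega_{i,\e}^{k}}$; uniform Poincaré--Wirtinger bounds the first term by $C\norm{\nabla u_{i}^{k}}_{L^{2}(\Omega_{i,\e}^{k})}^{2}$. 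To control $\abs{M_{i}}$ I would use the constant's trace, $\abs{M_{i}}^{2}\abs{\Gamma_{\e}^{k}}=\norm{M_{i}}_{L^{2}(\Gamma_{\e}^{k})}^{2}\leq 2\norm{u_{i}^{k}}_{L^{2}(\Gamma_{\e}^{k})}^{2}+2\norm{u_{i}^{k}-M_{i}}_{L^{2}(\Gamma_{\e}^{k})}^{2}$, then estimate $\norm{u_{i}^{k}-M_{i}}_{L^{2}(\Gamma_{\e}^{k})}^{2}$ by the scaled trace inequality together with Poincaré--Wirtinger as $\leq C\e^{-1}\norm{\nabla u_{i}^{k}}_{L^{2}(\Omega_{i,\e}^{k})}^{2}$, and divide by $\abs{\Gamma_{\e}^{k}}\geq c\e^{-1}$; since $\abs{\Omega_{i,\e}^{k}}\leq C$ this produces the intermediate estimate
\[
\norm{u_{i}^{k}}_{L^{2}(\Omega_{i,\e}^{k})}^{2}\leq C\Big(\norm{\nabla u_{i}^{k}}_{L^{2}(\Omega_{i,\e}^{k})}^{2}+\e\,\norm{u_{i}^{k}}_{L^{2}(\Gamma_{\e}^{k})}^{2}\Big).
\]

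Finally, on $\Gamma_{\e}^{k}$ the algebraic relation $v^{k}=(u_{i}^{k}-u_{e})\vert_{\Gamma_{\e}^{k}}$ gives $u_{i}^{k}=v^{k}+u_{e}$, so $\e\norm{u_{i}^{k}}_{L^{2}(\Gamma_{\e}^{k})}^{2}\leq 2\norm{\sqrt{\e}\,v^{k}}_{L^{2}(\Gamma_{\e}^{k})}^{2}+2\e\norm{u_{e}}_{L^{2}(\Gamma_{\e}^{k})}^{2}$, and the last summand has already been bounded by $\norm{\nabla u_{e}}_{L^{2}(\Omega_{e,\e})}^{2}$; substituting into the intermediate estimate yields \eqref{E_uik_vuike}. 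Modulo Cauchy--Schwarz and Young's inequality, the whole argument is bookkeeping of powers of $\e$; the one genuine point — and the place where \eqref{normalization} is indispensable, since otherwise a shift $u_{e}\mapsto u_{e}+c$ would destroy the bound — is the $\e$-uniformity of the Poincaré--Wirtinger constants on $\Omega_{i,\e}^{k},\Omega_{e,\e}$ and of the trace constant on $\Gamma_{\e}^{k}$. These are classical and rest on the existence of extension operators from the perforated subdomains into $\Omega$ with $\e$-independent norms (Cioranescu--Saint Jean Paulin type), so I would either invoke them directly or cite \cite{ciprian,bendunf19}, where the analogous surface trace estimate for $\e$-periodic interfaces is established.
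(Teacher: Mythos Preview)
Your proof is correct and follows essentially the same route as the paper's: uniform Poincar\'e--Wirtinger on $u_{e}$ via the normalization \eqref{normalization}, the mean/zero-mean decomposition of $u_{i}^{k}$, control of the constant part through $\abs{\Gamma_{\e}^{k}}\simeq\e^{-1}$, and the $\e$-scaled trace inequality on the periodic interface. The only cosmetic difference is that the paper splits $\widetilde{u}_{i}^{k}=v^{k}+u_{e}-\widehat{u}_{i}^{k}$ directly on $\Gamma_{\e}^{k}$, whereas you first derive the intermediate bound with $\e\norm{u_{i}^{k}}_{L^{2}(\Gamma_{\e}^{k})}^{2}$ and then substitute $u_{i}^{k}=v^{k}+u_{e}$; the two are equivalent.
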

\begin{proof}
We follow the same idea to the proof of Lemma 3.7 in \cite{karlsen}. Due the normalization condition \eqref{normalization}, Poincaré-Wirtinger's inequality implies that
 \begin{equation}
 \norm{ u_{e,n}}_{L^2\left( \Omega_{e,\e}\right)}^2\leq C \norm{\nabla u_{e,n}}_{L^2\left( \Omega_{e,\e}\right)}^2, 
 \label{E_PW_ue}
 \end{equation}
 for some constant $C$ independent on $n$ and $\e.$ Note that in the sequel $C$ is a generic constant whose value can change from one line to another.\\To estimate on the $L^2$ norms of $u_{i,n}^{k}$ for $k=1,2,$ we write 
 $$u_{i,n}^{k}=\widehat{u}_{i,n}^{k}+\widetilde{u}_{i,n}^{k},$$ 
 where $\widetilde{u}_{i,n}^{k}:=\dfrac{1}{\abs{\Omega_{i,\e}^{k}}}\displaystyle\int_{\Omega_{i,\e}^{k}} u_{i,n}^{k}dx$ is constant in $\Omega_{i,\e}^{k}$ and $\widehat{u}_{i,n}^{k}:=u_{i,n}^{k}-\widetilde{u}_{i,n}^{k}$ has zero mean in $\Omega_{i,\e}^{k}.$ Clearly, we see that for $k=1,2$
 \begin{equation*}
 \norm{ u_{i,n}^{k}}_{L^2\left( \Omega_{i,\e}^{k}\right)}^2= \norm{\widehat{u}_{i,n}^{k}}_{L^2\left( \Omega_{i,\e}^{k}\right)}^2+\norm{\widetilde{u}_{i,n}^{k}}_{L^2\left( \Omega_{i,\e}^{k}\right)}^2.
\end{equation*} 
In view of Poincaré-Wirtinger's inequality, one has
\begin{equation}
\norm{\widehat{u}_{i,n}^{k}}_{L^2\left( \Omega_{i,\e}^{k}\right)}^2\leq C \norm{\nabla\widehat{u}_{i,n}^{k}}_{L^2\left( \Omega_{i,\e}^{k}\right)}^2=C \norm{\nabla u_{i,n}^{k}}_{L^2\left( \Omega_{i,\e}^{k}\right)}^2 \text{ for } k=1,2.
\label{E_widehat_uik}
\end{equation}
Let us bound now $\norm{\widetilde{u}_{i,n}^{k}}_{L^2\left( \Omega_{i,\e}^{k}\right)}^2=\dfrac{\abs{\Omega_{i,\e}^{k}}}{\abs{\Gamma_{\e}^{k}}} \norm{\widetilde{u}_{i,n}^{k}}_{L^2\left(\Gamma_{\e}^{k}\right)}^2$ for $k=1,2.$ Since $\abs{\Gamma_{\e}^{k}}=\e^{-1}\abs{\Gamma^{k}}$ and $\abs{\Omega_{i,\e}^{k}}\leq \abs{\Omega},$ we deduce that
\begin{equation*}
\norm{\widetilde{u}_{i,n}^{k}}_{L^2\left( \Omega_{i,\e}^{k}\right)}^2\leq C \e \norm{\widetilde{u}_{i,n}^{k}}_{L^2\left(\Gamma_{\e}^{k}\right)}^2, \text{ for } k=1,2.
\end{equation*} 
It easy to check that
\begin{equation*}
\abs{\widetilde{u}_{i,n}^{k}}^{2}\leq C \left( \abs{u_{i,n}^{k}-u_{e,n}}^{2}+\abs{\widehat{u}_{i,n}^{k}}^{2}+\abs{u_{e,n}}^{2} \right).
\end{equation*}
Finally, we obtain for $k=1,2$
\begin{align*}
\norm{\widetilde{u}_{i,n}^{k}}_{L^2\left( \Omega_{i,\e}^{k}\right)}^2 & \leq C\left(  \e \norm{v_{n}^{k}}_{L^2\left(\Gamma_{\e}^{k}\right)}^2+ \e \norm{\widehat{u}_{i,n}^{k}}_{L^2\left(\Gamma_{\e}^{k}\right)}^2+\e \norm{u_{e,n}}_{L^2\left(\Gamma_{\e}^{k}\right)}^2\right) 
\\ & \leq C\e \norm{v_{n}^{k}}_{L^2\left(\Gamma_{\e}^{k}\right)}^2
\\ & \quad + C \left( \norm{\widehat{u}_{i,n}^{k}}_{L^2\left(\Omega_{i,\e}^{k}\right)}^2+\e^{2} \norm{\nabla \widehat{u}_{i,n}^{k}}_{L^2\left(\Omega_{i,\e}^{k}\right)}^2 \right) 
\\ & \quad +C \left( \norm{u_{e,n}}_{L^2\left(\Omega_{e,\e}\right)}^2+\e^{2}\norm{\nabla u_{e,n}}_{L^2\left(\Omega_{e,\e}\right)}^2\right)
\\ & \leq C\left(\norm{\sqrt{\e} v_{n}^{k}}_{L^2(\Gamma_{\e}^{k})}^2+\norm{\nabla u_{i,n}^{k}}_{L^2(\Omega_{i,\e}^{k})}^2+\norm{\nabla u_{e,n}}_{L^2(\Omega_{e,\e})}^2 \right), 
\end{align*}
where the second inequality is a direct consequence of the trace theorem and the final one is a result of \eqref{E_PW_ue} and \eqref{E_widehat_uik}. This completes the proof of this lemma. 
\end{proof}
Now, Estimate \eqref{E_uie} and \eqref{E_uik_vuike} imply that
 \begin{equation}
 \norm{ u_{e,n}}_{L^2\left( 0,T;H^1(\Omega_{e,\e})\right)}\leq C, 
 \label{E_ue_H1}
 \end{equation}
 for some constant $C$ independent on $n$ and $\e.$
Furthermore, we have $\norm{\sqrt{\e} v_{n}^k}_{L^2(\Gamma_{\e,T}^{k})}^2\leq C$ for $k=1,2$. Then Estimates \eqref{E_uik_vuike}, \eqref{E_uie} and \eqref{E_ue_H1} ensure that for $k=1,2,$ 
\begin{equation}
\norm{ u_{i,n}^{k}}_{L^2\left( 0,T;H^1(\Omega_{i,\e}^{k})\right)}\leq C.
\label{E_ui_H1}
\end{equation}
This completes the proof of \eqref{E_vw_gap}-\eqref{E_vr_gap} in Theorem \ref{thm_micro_gap}.

  Now we turn to find some uniform estimates on the time derivatives by following \cite{bend} which will be useful for the passage to the limit. We notice first for $k=1,2$ that,
  \begin{align*}
&\iint_{\Omega_{i,\e,T}^{k}}\mathrm{M}_{i}^{\e}\nabla u_{i,n}^{k}\cdot\nabla \left( \pt_t u_{i,n}^{k}\right) \ dx=\dfrac{1}{2} \int_{0}^{T}\pt_t\left( \int_{\Omega_{i,\e}^{k}}\mathrm{M}_{i}^{\e}\nabla u_{i,n}^{k}\cdot\nabla u_{i,n}^{k} \ dx\right)dt 
\\&= \dfrac{1}{2} \Bigg[\int_{\Omega_{i,\e}^{k}}\mathrm{M}_{i}^{\e}\nabla u_{i,n}^{k}(T,\cdot)\cdot\nabla u_{i,n}^{k}(T,\cdot) \ dx-\int_{\Omega_{i,\e}^{k}}\mathrm{M}_{i}^{\e}\nabla u_{i,n}^{k}(0,\cdot)\cdot\nabla u_{i,n}^{k}(0,\cdot) \ dx\Bigg],
  \end{align*}
  and
  \begin{align*}
&\iint_{\Gamma_{\e,T}^{k}} \mathrm{I}_{a,ion}\left(v_{n}^{k}\right)\pt_t v_{n}^{k}  \ d\sigma_x dt=\int_{0}^{T}\pt_t\left( \int_{\Gamma_{\e}^{k}}\int_{0}^{v_{n}^{k}} \mathrm{I}_{a,ion}\left(\tilde{v}_{n}^{k}\right) \ d\tilde{v}_{n}^{k}d\sigma_x\right) dt
\\&=\int_{\Gamma_{\e}^{k}} \int_{0}^{v_{n}^{k}(T,\cdot)} \mathrm{I}_{a,ion}\left(v_{n}^{k} \right) \ dv_{n}^{k}d\sigma_x-\int_{\Gamma_{\e}^{k}} \int_{0}^{v_{n}^{k}(0,\cdot)}\mathrm{I}_{a,ion}\left(v_{n}^{k}\right) \ dv_{n}^{k}d\sigma_x.
  \end{align*}

   Next, we substitute $\varphi_{i,n}^{k}=\pt_t u_{i,n}^{k},$ $\varphi_{e,n}=\pt_t u_{e,n}$ and $e_{n}^{k}=\e \alpha_4 \pt_t w_{n}^{k},$ respectively, in \eqref{Fv_trid}-\eqref{Fv_dyn_trid} then integrate in time to deduce using the previous equalities:
 \begin{equation}
 \begin{aligned}
& \sum\limits_{k=1,2} \norm{\sqrt{\e}\pt_t v_{n}^{k}}_{L^2(\Gamma_{\e,T}^{k})}^2+\alpha_4\sum\limits_{k=1,2} \norm{\sqrt{\e}\pt_t w_{n}^{k}}_{L^2(\Gamma_{\e,T}^{k})}^2+\frac{1}{2}\norm{\sqrt{\e} \pt_t s_{n}}_{L^2(\Gamma_{\e,T}^{1,2})}^2
\\& \quad + \sum \limits_{k=1,2}\norm{\sqrt{\delta_{n}}\pt_t \ov{u}_{i,n}^{k}}_{L^2(\Gamma_{\e,T}^{k})}^2+ \sum \limits_{k=1,2}\norm{\sqrt{\delta_{n}}\pt_t\ov{u}_{e,n}^{k}}_{L^2(\Gamma_{\e,T}^{k})}^2+\sum \limits_{k=1,2}\norm{\sqrt{\delta_{n}}\pt_t\ovo{u}_{i,n}^{k}}_{L^2(\Gamma_{\e,T}^{1,2})}^2 
\\&\quad +\sum \limits_{k=1,2}\norm{\sqrt{\delta_{n}}\pt_tu_{i,n}^{k}}_{L^2(\Omega_{i,\e,T}^{k})}^2+\norm{\sqrt{\delta_{n}} \pt_t u_{e,n}}_{L^2(\Omega_{e,\e,T})}^2
\\& \quad+\dfrac{1}{2} \Bigg[\sum \limits_{k=1,2}\int_{\Omega_{i,\e}^{k}}\mathrm{M}_{i}^{\e}\nabla u_{i,n}^{k}\cdot\nabla u_{i,n}^{k}(T,\cdot) \ dx+\int_{\Omega_{e,\e}}\mathrm{M}_{e}^{\e}\nabla u_{e,n}(T,\cdot)\cdot\nabla u_{e,n}(T,\cdot) \ dx
\\&\quad +\sum \limits_{k=1,2}\int_{\Gamma_{\e}^{k}} \int_{0}^{v_{n}^{k}(T,\cdot)}\e \tilde{\mathrm{I}}_{a,ion}\left(v_{n}^{k}\right) \ dv_{n}^{k}d\sigma_x\Bigg]
\\&=\dfrac{1}{2} \Bigg[\sum \limits_{k=1,2}\int_{\Omega_{i,\e}^{k}}\mathrm{M}_{i}^{\e}\nabla u_{i,n}^{k}(0,\cdot)\cdot\nabla u_{i,n}^{k}(0,\cdot) \ dx+\int_{\Omega_{e,\e}}\mathrm{M}_{e}^{\e}\nabla u_{e,n}(0,\cdot)\cdot\nabla u_{e,n}(0,\cdot) \ dx
\\& \quad +\sum \limits_{k=1,2}\int_{\Gamma_{\e}^{k}} \int_{0}^{v_{n}^{k}(0,\cdot)}\e \mathrm{I}_{a,ion}\left(v_{n}^{k}\right) \ dv_{n}^{k}d\sigma_x + \sum \limits_{k=1,2}\int_{\Gamma_{\e}^{k}}\int_{0}^{v_{n}^{k}(T,\cdot)}\e\left(\beta_1 v_{n}^{k}+\beta_2\right)   \ dv_{n}^{k}d\sigma_x\Bigg]
\\& \quad -\frac{1}{2}\iint_{\Gamma_{\e,T}^{1,2}} \e\I_{gap}\left(s_{n}\right)\pt_t s_{n} \ d\sigma_xd\tau+\sum \limits_{k=1,2}\int_{\Gamma_{\e,T}^{k}} \e\left( -\mathrm{I}_{b,ion}\left(w_{n}^{k}\right)\pt_t v_{n}^{k}+\alpha_4 H\left(v_{n}^{k},w_{n}^{k}\right) \pt_t w_{n}^{k} \right) \ d\sigma_xd\tau
\\& \quad+\sum \limits_{k=1,2}\int_{\Gamma_{\e,T}^{k}} \e\I_{app,\e}^{k}\pt_t v_{n}^{k} \ d\sigma_xd\tau.
\end{aligned}
\label{sub_dtuw}
\end{equation} 
We denote by $E'_\ell$ with $\ell = 1,\dots, 6$ the terms of the previous equation which is rewritten as follows (to respect the order):
 \begin{equation*}
E'_1+E'_2=E'_3+E'_4+E'_5+E'_6,
 \end{equation*} 
 where
 \begin{align*}
 E'_1 &:= \sum\limits_{k=1,2} \norm{\sqrt{\e}\pt_t v_{n}^{k}}_{L^2(\Gamma_{\e,T}^{k})}^2+\alpha_4\sum\limits_{k=1,2} \norm{\sqrt{\e}\pt_t w_{n}^{k}}_{L^2(\Gamma_{\e,T}^{k})}^2+\frac{1}{2}\norm{\sqrt{\e} \pt_t s_{n}}_{L^2(\Gamma_{\e,T}^{1,2})}^2
\\&+ \sum \limits_{k=1,2}\norm{\sqrt{\delta_{n}}\pt_t \ov{u}_{i,n}^{k}}_{L^2(\Gamma_{\e,T}^{k})}^2+ \sum \limits_{k=1,2}\norm{\sqrt{\delta_{n}}\pt_t\ov{u}_{e,n}^{k}}_{L^2(\Gamma_{\e,T}^{k})}^2+\sum \limits_{k=1,2}\norm{\sqrt{\delta_{n}}\pt_t\ovo{u}_{i,n}^{k}}_{L^2(\Gamma_{\e,T}^{1,2})}^2 
\\&+\sum \limits_{k=1,2}\norm{\sqrt{\delta_{n}}\pt_tu_{i,n}^{k}}_{L^2(\Omega_{i,\e,T}^{k})}^2+\norm{\sqrt{\delta_{n}} \pt_t u_{e,n}}_{L^2(\Omega_{e,\e,T})}^2.
 \end{align*}
 
Now, we estimate $E'_\ell$ for $\ell=2,\dots,6$ as follows:
\begin{itemize}
\item Due the uniform ellipticity \eqref{A_Mie_gap} of $\mathrm{M}_{j}^{\e}$ for $j=i,e,$ with the monotonicity \eqref{A_tildeI_a_2} on $\tilde{\mathrm{I}}_{a,ion}$, then we have 
 \begin{align*}
E'_2 \geq & \  \alpha \left(\sum \limits_{k=1,2}\norm{\nabla u_{i,n}^{k}(T,\cdot)}_{L^2(\Omega_{i,\e}^{k})}^2+\norm{\nabla u_{e,n}(T,\cdot)}_{L^2(\Omega_{e,\e})}^2 \right)
\\&+\sum \limits_{k=1,2}\int_{\Gamma_{\e}^{k}} \int_{0}^{v_{n}^{k}(T,\cdot)}\e \tilde{\mathrm{I}}_{a,ion}\left(v_{n}^{k}(T,\cdot) \right) \ d\sigma_xdv_{n}^{k}\geq 0.
\end{align*}
\item Furthermore, using the a priori estimate \eqref{E_vwsinf} with the assumption on $\mathrm{I}_{a,ion}$ and on the initial data, one gets
\begin{align*}
E'_3 \leq & \  \beta \left(\sum \limits_{k=1,2}\norm{\nabla u_{i,n}^{k}(0,\cdot)}_{L^2(\Omega_{i,\e}^{k})}^2+\norm{\nabla u_{e,n}(0,\cdot)}_{L^2(\Omega_{e,\e})}^2 \right)
\\&+\alpha_1\sum \limits_{k=1,2}\int_{\Gamma_{\e}^{k}}\e\left( \card{v_{n}^{k}(0,\cdot)}^r+\card{v_{n}^{k}(0,\cdot)}\right)  \ d\sigma_x
\\&+\dfrac{\beta_1}{2}\sum \limits_{k=1,2}\int_{\Gamma_{\e}^{k}}\e \card{v_{n}^{k}(T,\cdot)}^2  \ d\sigma_x+\beta_2\sum \limits_{k=1,2}\int_{\Gamma_{\e}^{k}}\e \card{v_{n}^{k}(T,\cdot)}  \ d\sigma_x\leq C_3
\end{align*}
for some  constant $C_3$ independent of $n$ and $\e.$
\item By the structure form of $\I_{gap}$ defined  in \eqref{gap_model}, we obtain using Young's inequality with estimate \eqref{E_vwsL2}
\begin{align*}
E'_4&\leq \dfrac{G_{gap}}{2}\norm{\sqrt{\e}s_n}_{L^2(\Gamma_{\e,T}^{1,2})}^2 +\dfrac{1}{4}\norm{\sqrt{\e}\pt_t s_{n}}_{L^2(\Gamma_{\e,T}^{1,2})}^2.
\\& \leq C_4+\dfrac{1}{4}\norm{\sqrt{\e}\pt_t s_{n}}_{L^2(\Gamma_{\e,T}^{1,2})}^2
\end{align*}
with $C_4$ independent of $n$ and $\e.$
\item Similarly, using the assumption  on $\mathrm{I}_{b,ion}$ and $H$ defined as \eqref{A_I_ab}-\eqref{A_H_Ib_a}, then we obtain using Young's inequality with the estimate \eqref{E_vwsL2}
\begin{align*}
E'_5 \leq C_5+\dfrac{1}{2}\sum \limits_{k=1,2}\left( \norm{\sqrt{\e}\pt_t v_{n}^{k}}_{L^2(\Gamma_{\e,T}^{k})}^2+\norm{\sqrt{\e}\pt_t w_{n}^{k}}_{L^2(\Gamma_{\e}^{k})}^2\right) 
\end{align*}
with $C_5$ independent of $n$ and $\e.$
\item By Young's inequality with the uniform $L^2$ boundedness \eqref{A_iapp_gap} of $\I_{app,\e}^{k}$, there exist constants $C_1,C_2>0$ independent of $n$ and $\e$ such that
\begin{align*}
E'_6\leq C_6+\dfrac{1}{2} \sum\limits_{k=1,2} \norm{\sqrt{\e}\pt_t v_{n}^{k}}_{L^2(\Gamma_{\e,T}^{k})}^2,
\end{align*}
with $C_6$ independent of $n$ and $\e.$
\end{itemize} 
Exploiting all this estimates along with \eqref{sub_dtuw}, one obtains
 \begin{equation}
 \begin{aligned}
& \sum\limits_{k=1,2} \norm{\sqrt{\e}\pt_t v_{n}^{k}}_{L^2(\Gamma_{\e,T}^{k})}^2+\alpha_4\sum\limits_{k=1,2} \norm{\sqrt{\e}\pt_t w_{n}^{k}}_{L^2(\Gamma_{\e,T}^{k})}^2+\norm{\sqrt{\e} \pt_t s_{n}}_{L^2(\Gamma_{\e,T}^{1,2})}^2
\\&\quad + \sum \limits_{k=1,2}\norm{\sqrt{\delta_{n}}\pt_t \ov{u}_{i,n}^{k}}_{L^2(\Gamma_{\e,T}^{k})}^2+ \sum \limits_{k=1,2}\norm{\sqrt{\delta_{n}}\pt_t\ov{u}_{e,n}^{k}}_{L^2(\Gamma_{\e,T}^{k})}^2+\sum \limits_{k=1,2}\norm{\sqrt{\delta_{n}}\pt_t\ovo{u}_{i,n}^{k}}_{L^2(\Gamma_{\e,T}^{1,2})}^2 
\\&\quad +\sum \limits_{k=1,2}\norm{\sqrt{\delta_{n}}\pt_tu_{i,n}^{k}}_{L^2(\Omega_{i,\e,T}^{k})}^2+\norm{\sqrt{\delta_{n}} \pt_t u_{e,n}}_{L^2(\Omega_{e,\e,T})}^2 \leq C
\end{aligned}
\label{E_dtvws}
\end{equation}
for some constant $C>0$ not depending on $n$ and $\e.$ \\

 The next steps is devoted to completing the proof of Theorem \ref{thm_micro_gap} and to passing to the limit when $n$ goes  to infinity. Further, it treat the uniqueness of the weak solutions to System \eqref{pbscale_gap}-\eqref{cond_ini_vws_gap}
\subsection*{Step 4: Passage to the limit and global existence of solutions}
In view of \eqref{E_ue_H1}-\eqref{E_ui_H1}, we can see that $v_n^{k},$ $\ov{u}_{j,n}^{k}$ are bounded in $L^2\left(0,T;H^{1/2}(\Gamma_{\e}^{k}) \right)$ for $j=i,e$ and $k=1,2$ using the standard trace lemma. Similarly, it easy to check that $s_n$ and $\ovo{u}_{i,n}^{k}$ are bounded in $L^2\left(0,T;H^{1/2}(\Gamma_{\e}^{1,2}) \right)$ for $k=1,2.$
Furthermore, we deduce from \eqref{E_dtvws} the uniform bound on $\pt_t v_{n}^{k}$ in $L^2(\Gamma_{\e,T}^{k})$ for $k=1,2$ and the uniform bound on $\pt_t s_{n}$ in $L^2(\Gamma_{\e,T}^{1,2})$. Recall that by the Aubin-Lions compactness criterion, the following injection
\begin{equation*}
\mathcal{W}:=\left\lbrace  u\in L^2\left(0,T;H^{1/2}(\Gamma_{\e})\right) \text{ and } \pt_t u \in L^2\left(0,T;H^{-1/2}(\Gamma_{\e})\right)\right\rbrace \subset L^2(\Gamma_{\e,T})
\end{equation*}
is compact with $\Gamma_{\e}:=\Gamma_{\e}^{k},\Gamma_{\e}^{1,2}$ for $k=1,2.$ Hence, we can assume there exist limit functions $u_{i,\e}^{1},$ $u_{i,\e}^{2},$ $u_{e,\e},$ $v_{\e}^{1},$ $v_{\e}^{2},$ $s_{\e},$ $w_{\e}$ with $v_{\e}^{k}=\ov{u}_{i,\e}^{k}-\ov{u}_{e,\e}^{k}$ on $\Gamma_{\e,T}^{k}$ for $k=1,2$ and $s_{\e}=\ovo{u}_{i,\e}^{1}-\ovo{u}_{i,\e}^{2}$ on $\Gamma_{\e,T}^{1,2},$ such that as $n\rightarrow \infty$ ( for fixed $\e$ and up to an unlabeled subsequence)
\begin{equation}
\begin{cases}
v_{n}^{k}\rightarrow v_{\e}^{k} \text{ a.e. in } \Gamma_{\e}^{k}, \text{ strongly in } L^2(\Gamma_{\e,T}^{k}), 
\\ \text{ and weakly } L^2\left(0,T; H^{1/2}(\Gamma_{\e}^{k})\right) \text{ for } k=1,2, 
\\ s_{n}\rightarrow s_{\e} \text{ a.e. in } \Gamma_{\e}^{1,2}, \text{ strongly in } L^2(\Gamma_{\e,T}^{1,2}),
\\ \text{ and weakly } L^2\left(0,T; H^{1/2}(\Gamma_{\e}^{1,2})\right),
\\ w_{n}^{k}\rightharpoonup w_{\e}^{k} \text{ weakly in } L^2(\Gamma_{\e,T}^{k}),
\\ u_{i,n}^{k}\rightharpoonup u_{i,\e}^{k} \text{ weakly in } L^2\left(0,T;H^{1}(\Omega_{i,\e}^{k})\right) \text{ for } k=1,2,
\\ u_{e,n}\rightharpoonup u_{e,\e} \text{ weakly in } L^2\left(0,T;H^{1}(\Omega_{e,\e})\right),
\\ \mathrm{I}_{a,ion}\left(v_{n}^{k}\right)\rightarrow \mathrm{I}_{a,ion}\left(v_{\e}^{k}\right) \text{ a.e. in } \Gamma_{\e}^{k}, \text{ weakly in } L^{r/(r-1)}(\Gamma_{\e,T}^{k}),
\end{cases}
\label{limit_vws}
\end{equation}
and
\begin{equation}
\begin{cases}
 \pt_t v_{n}^{k}\rightharpoonup \pt_t v_{\e}^{k} \text{ weakly in } L^2(\Gamma_{\e,T}^{k}),
\\ \pt_t w_{n}^{k}\rightharpoonup \pt_t w_{\e}^{k} \text{ weakly in } L^2(\Gamma_{\e,T}^{k}) \text{ for } k=1,2,
\\ \pt_t s_{n}\rightharpoonup \pt_t s_{\e} \text{ weakly in } L^2(\Gamma_{\e,T}^{1,2}).
\end{cases}
\label{limit_dtvws}
\end{equation}
Moreover, using again estimate \eqref{E_dtvws}, we get for $j=i,e$ and $k=1,2,$
\begin{equation}
\begin{cases}
\sqrt{\delta_{n}} \pt_t \ov{u}_{j,\e}^{k} \rightharpoonup 0 \text{ in } D'\left(0,T;L^{2}(\Gamma_{\e}^{k})\right) \text{ for } j=i,e,
\\
\\ \sqrt{\delta_{n}} \pt_t \ovo{u}_{i,\e}^{k} \rightharpoonup 0 \text{ in } D'\left(0,T;L^{2}(\Gamma_{\e}^{1,2})\right),\  \sqrt{\delta_{n}} \pt_t u_{i,\e}^{k} \rightharpoonup 0 \text{ in } D'\left(0,T;L^{2}(\Omega_{i,\e}^{k})\right),
\\
\\  \text{ and } \sqrt{\delta_{n}}\pt_t u_{e,\e} \rightharpoonup 0 \text{ in } D'\left(0,T;L^{2}(\Omega_{e,\e})\right).
\end{cases}
\label{limit_deltnu}
\end{equation} 

The last difficulty is to prove that the nonlinear term $\mathrm{I}_{a,ion}\left(v_{n}^{k}\right)$ converges weakly to the term $\mathrm{I}_{a,ion}\left(v_{\e}^{k}\right)$ for $k=1,2.$ Since $v_{n}^{k}$ converges strongly to $v_{\e}^{k}$ in $L^2(\Gamma_{\e,T}^{k})$, we can extract a subsequence, such that $v_{n}^{k}$ converges almost everywhere to $v_{\e}^{k}$ in $\Gamma_{\e}^{k}$ for $k=1,2.$ Moreover, since $\mathrm{I}_{a,ion}$ is continuous, we have 
 \begin{equation}
\mathrm{I}_{a,ion}\left(v_{n}^{k}\right)\rightarrow \mathrm{I}_{a,ion}\left(v_{\e}^{k}\right) \text{ a.e. in } \Gamma_{\e}^{k} \text{ for } \ k=1,2.
\label{limit_Iaion_ae}
\end{equation}
However, using a classical result (see Lemma 1.3 in \cite{lions1969}):
\begin{equation}
\mathrm{I}_{a,ion}\left(v_{n}^{k}\right)\rightharpoonup \mathrm{I}_{a,ion}\left(v_{\e}^{k}\right), \text{ weakly in } L^{r/(r-1)}(\Gamma_{\e,T}^{k}) \text{ for } \ k=1,2.
\label{limit_Iaion_weakly}
\end{equation}

\begin{rem}
By our choice of basis, it is clear that $\ov{u}_{j,n}^{k}(0,x) \rightarrow \ov{u}_{0,j,\e}^{k}$ in $L^{2}(\Gamma_{\e}^{k})$ for $k=1,2$ and $j=i,e$. Furthermore, we have $\ovo{u}_{i,n}^{k}(0,x) \rightarrow \ovo{u}_{0,i,\e}^{k}$ in $L^{2}(\Gamma_{\e}^{k})$ for $k=1,2$.
\end{rem}
 
Keeping in mind \eqref{limit_vws}-\eqref{limit_Iaion_weakly}, we obtain by letting $n\rightarrow \infty$ in the weak formulation \eqref{Fv_trid}-\eqref{Fv_dyn_trid}
\begin{equation}
\begin{aligned}
&\sum \limits_{k=1,2} \int_{\Gamma_{\e}^{k}}\e \pt_t v_{\e}^{k} \psi^{k} \ d\sigma_x+\frac{1}{2}\int_{\Gamma_{\e}^{1,2}}\e \pt_t s_{\e} \Psi \ d\sigma_x
\\&\quad +\sum \limits_{k=1,2}\int_{\Omega_{i,\e}^{k}}\mathrm{M}_{i}^{\e}\nabla u_{i,\e}^{k}\cdot\nabla\varphi_{i}^{k} \ dx+\int_{\Omega_{e,\e}}\mathrm{M}_{e}^{\e}\nabla u_{e,\e}\cdot\nabla\varphi_{e} \ dx
\\&\quad +\sum \limits_{k=1,2}\int_{\Gamma_{\e}^{k}} \e\I_{ion}\left(v_{\e}^{k},w_{\e}^{k}\right)\psi^{k} \ d\sigma_x+\frac{1}{2}\int_{\Gamma_{\e}^{1,2}} \e\I_{gap}\left(s_{\e}\right)\Psi \ d\sigma_x
\\&=\sum \limits_{k=1,2}\int_{\Gamma_{\e}^{k}} \e\I_{app,\e}^{k}\psi^{k} \ d\sigma_x,
\end{aligned}
\label{Fv_trid_passage}
\end{equation}
\begin{equation}
\int_{\Gamma_{\e}^{k}} \pt_t w_{\e}^{k}e^{k} \ d\sigma_x=\int_{\Gamma_{\e}^{k}} H\left(v_{\e}^{k},w_{\e}^{k}\right) e^{k} \ d\sigma_x,
\label{Fv_dyn_trid_passage}
\end{equation}
for all $\varphi_{i}^{k}\in H^{1}(\Omega_{i,\e}^{k}),$ $\varphi_{e}\in H^{1}(\Omega_{e,\e})$ with $\psi^{k}=\psi_{i}^{k}-\psi_{e}^{k}\in H^{1/2}(\Gamma_{\e}^{k})\cap L^{r}(\Gamma_{\e}^{k})$ for $k=1,2,$  $\Psi=\Psi_{i}^{1}-\Psi_{i}^{2}\in  L^{2}(\Gamma_{\e}^{1,2})$ and $e^{k}\in L^{2}(\Gamma_{\e}^{k})$ for $k=1,2.$
  Finally, it only remains to be proved that $v_{\e}^{k},$ $w_{\e}^{k}$ for $k=1,2$ and $s_{\e}$ satisfy the initial conditions stated in Definition \ref{Fv_gap}. 
  Using the weak formulation \eqref{Fv_i1n_n_gap}-\eqref{Fv_en_n_gap}, we see that $v_{\e}^{k}(0,x)=v_{0,\e}^{k}(x)$ a.e. on $\Gamma_{\e,T}^{k},$ since, by construction, $\ov{u}_{j,n}^{k}(0,x) \rightarrow \ov{u}_{0,j,\e}^{k}$ in $L^{2}(\Gamma_{\e}^{k})$ for $k=1,2$ and $j=i,e$. The same argument holds for $w_{\e}^{k}$ for $k=1,2$ and $s_{\e}$.

\subsection*{Step 5: Uniqueness of solutions} This step prove that there there exists at most one weak solution of \eqref{Fv_trid_passage}-\eqref{Fv_dyn_trid_passage}. We assume that $u^{\ell}=\left(u_{i,\e}^{1,\ell}, u_{i,\e}^{2,\ell}, u_{e,\e}^{\ell}, w_{\e}^{1,\ell}, w_{\e}^{2,\ell} \right),$ $\ell\in\left\lbrace \ell',\ell''\right\rbrace $ are two  weak solutions in the sense of Definition \ref{Fv_gap} with same initial data. Thus, this weak formulations hold respectively for $u_{i,\e}^{k,\ell'}-u_{i,\e}^{k,\ell''}$ and $w_{\e}^{k,\ell'}-w_{\e}^{k,\ell''}$ for $k=1,2$. 

 Firstly, we substitute $\varphi_{i}^{k}= u_{i,\e}^{k,\ell'}-u_{i,\e}^{k,\ell''},$ $\varphi_{e}= u_{e,\e}^{\ell'}-u_{e,\e}^{\ell''},$ and $e^{k}= \e\left( w_{\e}^{k,\ell'}-w_{\e}^{k,\ell''}\right) ,$ $k=1,2,$ respectively in \eqref{Fv_trid_passage}-\eqref{Fv_dyn_trid_passage}. Then, we add the resulting equations and integrate over $(0,t)$ for $0<t \leq T$ to get 
\begin{equation*}
\begin{aligned}
&\dfrac{1}{2}\Bigg[\sum \limits_{k=1,2} \int_{\Gamma_{\e}^{k}}\left( \e \card{\left( v_{\e}^{k,\ell'}-v_{\e}^{k,\ell''}\right) (t,\cdot) }^2+\e\card{\left( w_{\e}^{k,\ell'}-w_{\e}^{k,\ell''}\right) (t,\cdot) }^2 \right) \ d\sigma_x
\\& \quad +\frac{1}{2}\int_{\Gamma_{\e}^{1,2}}\e \card{\left( s_{\e}^{\ell'}-s_{\e}^{\ell''}\right) (t,\cdot) }^2  \ d\sigma_x\Bigg]
\\& \quad +\sum \limits_{k=1,2}\iint_{\Omega_{i,\e,t}^{k}}\mathrm{M}_{i}^{\e}\nabla \left( u_{i,\e}^{k,\ell'}-u_{i,\e}^{k,\ell''}\right)\cdot\nabla\left( u_{i,\e}^{k,\ell'}-u_{i,\e}^{k,\ell''}\right) \ dxd\tau
\\& \quad +\iint_{\Omega_{e,\e,t}}\mathrm{M}_{e}^{\e}\nabla\left( u_{e,\e}^{\ell'}-u_{e,\e}^{\ell''}\right)\cdot\nabla\left( u_{e,\e}^{\ell'}-u_{e,\e}^{\ell''}\right) \ dxd\tau
\\& \quad +\sum \limits_{k=1,2}\iint_{\Gamma_{\e,t}^{k}} \e\left( \tilde{\mathrm{I}}_{a,ion}\left(v_{\e}^{k,\ell'}\right)- \tilde{\mathrm{I}}_{a,ion}\left(v_{\e}^{k,\ell''}\right)\right)\left( v_{\e}^{k,\ell'}-v_{\e}^{k,\ell''}\right) \ d\sigma_xd\tau
\\&=\dfrac{1}{2}\Bigg[\sum \limits_{k=1,2} \int_{\Gamma_{\e}^{k}}\left( \e \card{\left( v_{0,\e}^{k,\ell'}-v_{0,\e}^{k,\ell''}\right)}^2+\e\card{\left( w_{0,\e}^{k,\ell'}-w_{0,\e}^{k,\ell''}\right)}^2 \right) \ d\sigma_x
\\& \quad +\frac{1}{2}\int_{\Gamma_{\e}^{1,2}}\e \card{\left( s_{0,\e}^{\ell'}-s_{0,\e}^{\ell''}\right)}^2 \ d\sigma_x\Bigg]
\\& \quad +\beta_1\sum \limits_{k=1,2}\iint_{\Gamma_{\e,t}^{k}} \e\left( v_{\e}^{k,\ell'}-v_{\e}^{k,\ell''}\right)^2 \ d\sigma_xd\tau
\\& \quad -\sum \limits_{k=1,2}\iint_{\Gamma_{\e,t}^{k}} \e \mathrm{I}_{b,ion}\left(w_{\e}^{k,\ell'}-w_{\e}^{k,\ell''}\right)\left( v_{\e}^{k,\ell'}-v_{\e}^{k,\ell''}\right) \ d\sigma_xd\tau
\\& \quad -\sum \limits_{k=1,2}\iint_{\Gamma_{\e,t}^{k}} \e\left( H\left(v_{\e}^{k,\ell'},w_{\e}^{k,\ell'}\right)- H\left(v_{\e}^{k,\ell''},w_{\e}^{k,\ell''}\right)\right)\left( w_{\e}^{k,\ell'}-w_{\e}^{k,\ell''}\right) \ d\sigma_xd\tau
\\& \quad -\frac{1}{2}\iint_{\Gamma_{\e,t}^{1,2}} \e\I_{gap}\left( s_{\e}^{\ell'}-s_{\e}^{\ell''}\right)\left( s_{\e}^{\ell'}-s_{\e}^{\ell''}\right) \ d\sigma_xd\tau
\\& \quad + \sum \limits_{k=1,2}\iint_{\Gamma_{\e,t}^{k}} \e\I_{app,\e}^{k}\left( v_{\e}^{k,\ell'}-v_{\e}^{k,\ell''}\right) \ d\sigma_xd\tau.
\end{aligned}
\end{equation*}

 Due the uniform ellipticity \eqref{A_Mie_gap} of $\mathrm{M}_{j}^{\e}$ for $j=i,e,$ we have
 \begin{align*}
 &\sum \limits_{k=1,2}\iint_{\Omega_{i,\e,t}^{k}}\mathrm{M}_{i}^{\e}\nabla \left( u_{i,\e}^{k,\ell'}-u_{i,\e}^{k,\ell''}\right)\cdot\nabla\left( u_{i,\e}^{k,\ell'}-u_{i,\e}^{k,\ell''}\right) \ dxd\tau
\\&\quad +\iint_{\Omega_{e,\e,t}}\mathrm{M}_{e}^{\e}\nabla\left( u_{e,\e}^{\ell'}-u_{e,\e}^{\ell''}\right)\cdot\nabla\left( u_{e,\e}^{\ell'}-u_{e,\e}^{\ell''}\right) \ dxd\tau 
\\ & \geq \alpha \left(\sum \limits_{k=1,2}\norm{\nabla\left( u_{i,\e}^{k,\ell'}-u_{i,\e}^{k,\ell''}\right)}^{2}_{L^2(\Omega_{i,\e,t}^{k})} + \norm{\nabla\left( u_{e,\e}^{\ell'}-u_{e,\e}^{\ell''}\right)}^{2}_{L^2(\Omega_{e,\e,t})} \right)\geq 0.
\end{align*}  
Furthermore, thanks to the monotonicity assumption \eqref{A_tildeI_a_2} on $\tilde{\mathrm{I}}_{a,ion},$ we deduce that
\begin{align*}
&\sum \limits_{k=1,2}\iint_{\Gamma_{\e,t}^{k}} \e\left( \tilde{\mathrm{I}}_{a,ion}\left(v_{\e}^{k,\ell'}\right)- \tilde{\mathrm{I}}_{a,ion}\left(v_{\e}^{k,\ell''}\right)\right)\left( v_{\e}^{k,\ell'}-v_{\e}^{k,\ell''}\right) \ d\sigma_xd\tau  \geq 0.
\end{align*}
Moreover, by the linearity of $\mathrm{I}_{b,ion},$ $H$ and $\I_{gap},$ we can deduce using Young's inequality the following estimation
\begin{equation}
\begin{aligned}
&\dfrac{1}{2}\Bigg[\sum \limits_{k=1,2} \int_{\Gamma_{\e}^{k}}\left( \e \card{\left( v_{\e}^{k,\ell'}-v_{\e}^{k,\ell''}\right) (t,\cdot) }^2+\e\card{\left( w_{\e}^{k,\ell'}-w_{\e}^{k,\ell''}\right) (t,\cdot) }^2 \right) \ d\sigma_x+
\\&\quad +\frac{1}{2}\int_{\Gamma_{\e}^{1,2}}\e \card{\left( s_{\e}^{\ell'}-s_{\e}^{\ell''}\right) (t,\cdot) }^2  \ d\sigma_x\Bigg]
\\ & \quad +\alpha \left(\sum \limits_{k=1,2}\norm{\nabla\left( u_{i,\e}^{k,\ell'}-u_{i,\e}^{k,\ell''}\right)}^{2}_{L^2(\Omega_{i,\e,t}^{k})} + \norm{\nabla\left( u_{e,\e}^{\ell'}-u_{e,\e}^{\ell''}\right)}^{2}_{L^2(\Omega_{e,\e,t})} \right)
\\&  \leq \dfrac{1}{2}\Bigg[\sum \limits_{k=1,2} \int_{\Gamma_{\e}^{k}}\left( \e \card{\left( v_{0,\e}^{k,\ell'}-v_{0,\e}^{k,\ell''}\right)}^2+\e\card{\left( w_{0,\e}^{k,\ell'}-w_{0,\e}^{k,\ell''}\right)}^2 \right) \ d\sigma_x
\\&\qquad \qquad \qquad +\frac{1}{2}\int_{\Gamma_{\e}^{1,2}}\e \card{\left( s_{0,\e}^{\ell'}-s_{0,\e}^{\ell''}\right)}^2 \ d\sigma_x\Bigg]
\\& \qquad +C\Bigg[\sum \limits_{k=1,2} \int_{0}^{t}\int_{\Gamma_{\e}^{k}}\left( \e \card{\left( v_{\e}^{k,\ell'}-v_{\e}^{k,\ell''}\right)}^2+\e\card{\left( w_{\e}^{k,\ell'}-w_{\e}^{k,\ell''}\right)}^2 \right) \ d\sigma_x
\\&\qquad \qquad \qquad +\frac{1}{2}\int_{\Gamma_{\e}^{1,2}}\e\card{\left( s_{\e}^{\ell'}-s_{\e}^{\ell''}\right)}^2  \ d\sigma_xd\tau\Bigg]
\end{aligned}
\label{E_v1v2_uniqueness}
\end{equation}
 where $C>0$ is a constant independent of $\e.$ Thus, we obtain by applying Gronwall's inequality 
 \begin{align*}
&\dfrac{1}{2}\Bigg[\sum \limits_{k=1,2} \int_{\Gamma_{\e}^{k}}\left( \e \card{\left( v_{\e}^{k,\ell'}-v_{\e}^{k,\ell''}\right) (t,\cdot) }^2+\e\card{\left( w_{\e}^{k,\ell'}-w_{\e}^{k,\ell''}\right) (t,\cdot) }^2 \right) \ d\sigma_x
\\& \quad +\frac{1}{2}\int_{\Gamma_{\e}^{1,2}}\e \card{\left( s_{\e}^{\ell'}-s_{\e}^{\ell''}\right) (t,\cdot) }^2  \ d\sigma_x\Bigg]
\\& \leq C\Bigg[\sum \limits_{k=1,2} \int_{\Gamma_{\e}^{k}}\left( \e \card{\left( v_{0,\e}^{k,\ell'}-v_{0,\e}^{k,\ell''}\right)}^2+\e\card{\left( w_{0,\e}^{k,\ell'}-w_{0,\e}^{k,\ell''}\right)}^2 \right) \ d\sigma_x
\\&\qquad \qquad \qquad +\frac{1}{2}\int_{\Gamma_{\e}^{1,2}}\e \card{\left( s_{0,\e}^{\ell'}-s_{0,\e}^{\ell''}\right)}^2 \ d\sigma_x\Bigg]
\end{align*}
for some constant $C>0.$ Hence, we deduce that $v_{\e}^{k,\ell'}=v_{\e}^{k,\ell''},$ $w_{\e}^{k,\ell'}=w_{\e}^{k,\ell''}$ for $k=1,2$ and $s_{\e}^{\ell'}=s_{\e}^{\ell''}.$
Moreover, using Estimation \eqref{E_v1v2_uniqueness}, we conclude that
\begin{align*}
& \nabla\left( u_{e,\e}^{\ell'}-u_{e,\e}^{\ell''}\right)=0 &\text{ a.e. on } \Omega_{e,\e,t}, 
\\& \nabla\left( u_{i,\e}^{k,\ell'}-u_{i,\e}^{k,\ell''}\right)=0 &\text{ a.e. on } \Omega_{i,\e,t}^{k},
\end{align*}
which means that $u_{e,\e}^{\ell'}=u_{e,\e}^{\ell''}+c$ and $u_{i,\e}^{k,\ell'}=u_{i,\e}^{k,\ell''}+c$ for $k=1,2$. On the one hand, due to the normalization condition \eqref{normalization}, $c=0$ and $u_{e,\e}^{\ell'}=u_{e,\e}^{\ell''}.$ 
 On the other hand, the estimation \eqref{E_uik_vuike} holds for $u_{i,\e}^{k,\ell'}-u_{i,\e}^{k,\ell''}$ which gives
\begin{align*}
\norm{u_{i,\e}^{k,\ell'}-u_{i,\e}^{k,\ell''}}_{L^2(\Omega_{i,\e}^{k})}^2 &\leq C\Bigg(\norm{\sqrt{\e} \left( v_{\e}^{k,\ell'}-v_{\e}^{k,\ell''}\right)}_{L^2(\Gamma_{\e}^{k})}^2+\norm{\nabla \left( u_{i,\e}^{k,\ell'}-u_{i,\e}^{k,\ell''} \right)}_{L^2(\Omega_{i,\e}^{k})}^2
\\ & \quad+\norm{\nabla \left( u_{e,\e}^{\ell'}-u_{e,\e}^{\ell''}\right)}_{L^2(\Omega_{e,\e})}^2 \Bigg), \text{ with } k=1,2.
\end{align*}
In addition, we have  $v_{\e}^{k,\ell'}=v_{\e}^{k,\ell''}$ so we obtain finally $u_{i,\e}^{k,\ell'}=u_{i,\e}^{k,\ell''}$ for $k=1,2.$ This gives the uniqueness proof of weak solutions.

\subsection*{Funding}
This research was supported by IEA-CNRS in the context of HIPHOP project.

\bibliographystyle{plain}
 \bibliography{Hom}

\end{document}